\IfFileExists{pdfmanagement.sty}
  {\RequirePackage{pdfmanagement}}
  {}
\RequirePackage{silence}
\documentclass[11pt,leqno,noamsfonts]{amsart}

\usepackage[p,scosf,looser]{scholax}
\usepackage{mathtools}
\usepackage[scaled=1.075,ncf,vvarbb]{newtxmath}
\usepackage{microtype}
\usepackage{enumitem}
\usepackage{booktabs}
\usepackage[dvipsnames]{xcolor}
\usepackage{tikz-cd}
\usepackage{graphicx}
\usepackage{todonotes}
\usepackage{zref-clever}
\usepackage[pagebackref,colorlinks]{hyperref}

\hypersetup{
  citecolor=Green,
  urlcolor=violet,
  pdftitle={Bertini theorems for Hilbert–Samuel multiplicity over finite fields},
  pdfauthor={Rahul Ajit and Matthew Bertucci},
  }
\IfFileExists{pdfmanagement.sty}
  {}
  {\hypersetup{linkcolor=Red}}

\zcsetup{S,nameinlink=false,lastsep={, and\nobreakspace}}

\zcRefTypeSetup{section}{
Name-sg = \S ,
name-sg = \S ,
Name-pl = \S ,
name-pl = \S ,
namesep = \, ,
}
\zcRefTypeSetup{claim}{
Name-sg = Claim ,
name-sg = claim ,
Name-pl = Claims ,
name-pl = claims ,
}

\ifdefined\newcounteralias
  \newcommand{\newaliastheorem}[3]{%
    \newtheorem{#1}[#2]{#3}%
    }
\else
  \usepackage{aliascnt}
  \newcommand{\newaliastheorem}[3]{%
    \newaliascnt{#1}{#2}%
    \newtheorem{#1}[#1]{#3}%
    \aliascntresetthe{#1}%
    }
\fi

\newcommand{\bP}{\mathbb{P}}
\newcommand{\Fq}{\mathbb{F}_q}

\newcommand{\cP}{\mathcal{P}}

\newcommand{\red}{\mathrm{red}}

\newcommand{\afrak}{\mathfrak{a}}
\newcommand{\mfrak}{\mathfrak{m}}
\newcommand{\nfrak}{\mathfrak{n}}
\newcommand{\pfrak}{\mathfrak{p}}

\newcommand{\sheaf}{\mathscr}

\DeclareMathOperator{\Spec}{Spec}

\DeclareMathOperator{\gr}{gr}
\DeclareMathOperator{\ord}{ord}

\DeclareMathOperator{\Sing}{Sing}
\DeclareMathOperator{\Reg}{Reg}
\DeclareMathOperator{\edim}{edim}
\DeclareMathOperator{\Sym}{Sym}

\numberwithin{equation}{section}

\newtheorem{theorem}{Theorem}[section]
\newaliastheorem{lemma}{theorem}{Lemma}
\newaliastheorem{corollary}{theorem}{Corollary}
\newaliastheorem{proposition}{theorem}{Proposition}

\theoremstyle{definition}
\newaliastheorem{definition}{theorem}{Definition}
\newaliastheorem{claim}{theorem}{Claim}

\theoremstyle{remark}
\newaliastheorem{remark}{theorem}{Remark}

\title[Bertini theorems for multiplicity over finite fields]{Bertini theorems for Hilbert--Samuel multiplicity over finite fields}

\author{Rahul Ajit}
\address{Department of Mathematics, The University of Utah, Salt Lake City, UT 84112}
\email{rahulajit@math.utah.edu, confusedrahul@yahoo.com}

\author{Matthew Bertucci}
\address{Department of Mathematics, Willamette University, Salem, OR 97301}
\email{mbertucci@willamette.edu}

\date{\today}

\dedicatory{To Prof.\ Heisuke Hironaka, in memoriam.}

\begin{document}
\begin{abstract}
Let $X$ be a quasiprojective subscheme of $\bP^n_{\Fq}$.
We prove a Bertini theorem for Hilbert--Samuel multiplicity of $X$; that is, there exists a positive-density set of hypersurfaces $H_f$ such that for every point $\xi\in X\cap H_f$, one has $\ord_\xi(f)=1$ and $e_\xi(X\cap H_f)=e_\xi(X)$.

Furthermore, we extend this result on hypersurfaces to complete intersections, hypersurfaces containing a prescribed subscheme, and semiample linear systems.
\end{abstract}

\maketitle

\tableofcontents
\newpage
\section{Introduction}

Let $k$ be a field and let $X$ be a quasiprojective subscheme of $\bP^n_k$.
For $\xi\in X$, the Hilbert--Samuel multiplicity $e_\xi(X)$ provides a coarse measure of the singularity of $X$ at $\xi$.
For example, if $X$ is regular at $\xi$, then $e_\xi(X)=1$; if $X$ is reduced and equidimensional, the converse holds as well (\cite[Theorem~40.6]{nagata}).

One of the classical Bertini theorems says that if $k$ is algebraically closed and $X$ is smooth, then a general hyperplane section of $X$ is also smooth (see, e.g., \cite{kleiman:bertini}).
If $k$ is infinite but not necessarily algebraically closed, then one can still always find infinitely many smooth hyperplane sections of $X$.

When $k$ is finite, the classical statement fails: one can construct smooth $X$ so that \emph{no} hyperplane sections, in fact no hypersurface sections up to a fixed degree $d$, are smooth (\cite[Question~10]{katz}, \cite[Theorem~3.1]{poonen:bertini}).
However, Poonen showed in \cite{poonen:bertini} that if one considers hypersurfaces of degree $d$ in $\bP^n_k$ as $d\to\infty$, a positive-density subset do give smooth sections.
The closed-point sieve introduced in \cite[Theorems~1.1--1.3]{poonen:bertini} has been extended to complete intersections (\cite[Theorem~1.2]{bucur-kedlaya}), hypersurfaces containing a prescribed subscheme (\cite[Theorem~1.1]{poonen:subvar}, \cite[Theorem~1.1]{gunther}, \cite[Theorem~1.2]{wutz2016}), simultaneous conditions on disjoint smooth loci (\cite[Proposition~4.8]{ghosh-krishna}), semiample systems (\cite[Theorem~9.10]{erman-wood}), and locally free first-order Taylor conditions (\cite[Theorems~A--C]{Matthew}).

Over an algebraically closed field of characteristic zero and with $X$ either irreducible or both reduced and equidimensional, the analogous classical Bertini theorem for Hilbert--Samuel multiplicity holds (see \cite[Proposition~4.5]{tommaso} and \cite[Proposition~3.4]{complex-links}).
That is, for a general hyperplane $H$, we have $e_\xi(X\cap H)=e_\xi(X)$ for all $\xi\in X\cap H$.
In \zcref{sec:infinite-field}, we use the results of \cite{spreafico} and \zcref{sec:bridge} to prove a generalization of this Bertini theorem to any infinite base field whenever $X_{\red}$ admits a smooth permissible
stratification, without assuming that $X$ is reduced or equidimensional.
Over a perfect field, such a stratification exists by \zcref{thm:stratification}.

The main result of this paper is the following Bertini theorem for Hilbert--Samuel multiplicity over finite fields.

\begin{theorem}
\label{thm:intro-main}
Let $X\subseteq\bP^n_{\Fq}$ be quasiprojective, and let
\[
  X_{\red}=Y_1\sqcup\cdots\sqcup Y_N
\]
be a smooth permissible stratification.
Set $s_i=\dim Y_i$ and let $\cP$ be the set of homogeneous polynomials $f$ such that $Y_i\cap H_f$ is smooth of dimension $s_i-1$ for every $i$, where dimension $-1$ means the empty scheme.
Then $\cP$ has density
\begin{equation}
\label{eq:intro-density}
  \mu(\cP)=\prod_{i=1}^N\zeta_{Y_i}(s_i+1)^{-1}>0.
\end{equation}
Furthermore, if $f\in\cP$ and $\xi\in X\cap H_f$, then
\begin{gather*}
  \ord_\xi(f)=1,
  \qquad
  f_\xi\text{ is a nonzerodivisor on }\sheaf O_{X,\xi},
  \\
  \text{and}
  \quad
  e_\xi(X\cap H_f)=e_\xi(X).
\end{gather*}
\end{theorem}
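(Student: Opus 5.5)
The proof has two independent parts: the density computation, and the local algebra at points of $X\cap H_f$.

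\emph{Density.} I would run Poonen's closed-point sieve simultaneously on all strata, as in the version of \cite[Theorem~1.1]{poonen:bertini} for a finite disjoint union of locally closed smooth subschemes. Call a closed point $P\in Y_i$ \emph{bad} for $f$ if $P\in H_f$ and $Y_i\cap H_f$ fails to be smooth of dimension $s_i-1$ at $P$. Since $Y_i$ is smooth of dimension $s_i$ at $P$, this happens exactly when $f$ vanishes in $\sheaf O_{Y_i,P}/\mfrak_P^2$. This $\kappa(P)$-space has dimension $s_i+1$, so the local condition has probability $1-q^{-\deg P\,(s_i+1)}$.

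The low-degree sieve then proceeds as follows. The strata are disjoint, so the finitely many closed points of degree $\le r$ across all $Y_i$ are distinct. Surjectivity of $H^0(\bP^n,\sheaf O(d))\to\prod_P H^0(\sheaf O_{Z_P})$ for the first-order neighbourhoods $Z_P$ of these points in their strata, valid for $d\gg0$, gives the product formula. Letting $r\to\infty$ yields $\prod_i\zeta_{Y_i}(s_i+1)^{-1}$. This product is positive because $\zeta_{Y_i}(s)$ converges for $s>s_i$.

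The medium- and high-degree error estimates are applied stratum by stratum, exactly as in Poonen's argument. Poonen's argument works on each $Y_i$ after covering it by affine opens, and it uses only that $Y_i$ is smooth quasiprojective of dimension $s_i$. Finitely many strata add only finitely many error terms, each tending to $0$. This gives \eqref{eq:intro-density}.

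\emph{Local consequences.} Fix $f\in\cP$, a point $\xi\in X\cap H_f$, and the unique stratum $Y_i$ with $\xi\in Y_i$.

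First, $\ord_\xi(f)=1$. Since $Y_i\cap H_f$ is smooth of dimension $s_i-1$ at $\xi$, the image of $f$ in $\sheaf O_{Y_i,\xi}$ lies in $\mfrak\setminus\mfrak^2$. Hence its image in $\sheaf O_{X,\xi}$ also lies outside $\mfrak_{X,\xi}^2$, because $\sheaf O_{X,\xi}\to\sheaf O_{Y_i,\xi}$ is a surjection of local rings. Strictly, I should use $\overline{Y_i}$ or the local ring of the stratum.

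Second, $f_\xi$ is a nonzerodivisor on $\sheaf O_{X,\xi}$. It suffices to show that $f_\xi$ avoids every associated prime $\pfrak$ of $\sheaf O_{X,\xi}$. Such a prime is the generic point $\eta$ of some irreducible closed $Z\ni\xi$, and $\eta$ lies in some stratum $Y_j$. The condition $Y_j\cap H_f$ of dimension $s_j-1$ means $H_f$ contains no component of $Y_j$. I then need $\eta$ to be a generic point of a component of its stratum. Permissibility of the stratification, via \zcref{sec:bridge}, should give this: associated points of $X$ are generic points of strata. So $\eta\notin H_f$, that is, $f\notin\pfrak$.

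Third, multiplicity. This is the step I expect to be the main obstacle. The plan is to use the result from \zcref{sec:bridge}, relying on \cite{spreafico}, that along a permissible stratum $X$ is normally flat. Then $e_\xi(X)$ equals the multiplicity of the normal cone fiber $C_{Y_i}X$ at $\xi$. Moreover, if $H$ is transverse to $Y_i$ at $\xi$ (order one on $Y_i$) and $f$ is a nonzerodivisor, then $X\cap H$ is normally flat along $Y_i\cap H$ with the same fiber cone. This gives $e_\xi(X\cap H_f)=e_\xi(Y_i\cap H_f)\cdot e(\text{fiber})=e_\xi(X)$, since both strata are regular.

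Concretely, I would show that $\gr_{\mfrak}(\sheaf O_{X,\xi})\cong\gr_{\mfrak}(\sheaf O_{Y_i,\xi})\otimes F$ with $F$ the fiber cone algebra, so the initial form of $f$ is a degree-one element coming from the regular factor. That form is a nonzerodivisor in the tangent cone, so $e(\sheaf O/f)=e(\sheaf O)$ by the standard superficial-element criterion.
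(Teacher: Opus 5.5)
Your proposal is essentially the paper's proof. The density comes from Poonen's closed-point sieve run over the disjoint strata with local factor $1-q^{-(s_i+1)\deg P}$. The local statements come from the Hironaka--Bennett splitting $\gr_{\mfrak}(\sheaf O_{X,\xi})\cong A\otimes_\kappa B$ of \zcref{thm:Bennett}, which is what permissibility provides ($\cite{spreafico}$ plays no role here). Under that splitting $f^*$ is a degree-one nonzerodivisor, so the Hilbert series of the section is $(1-t)$ times that of $\sheaf O_{X,\xi}$ and $e$ is unchanged.

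Two points need tightening.

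First, for a given splitting one only has $f^*=a+\ell$ with $0\ne\ell\in B_1$ and $a\in A_1$ possibly nonzero. There are two fixes. You can build the splitting from lifts of a regular system of parameters of $\sheaf O_{Y_i,\xi}$ that include $f$ itself. Or you can use the monic-in-$\ell$ argument of \zcref{lem:monic-linear-sequence}.

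Second, your associated-primes argument for the nonzerodivisor claim rests on the unproved assertion that associated points of $X$ are generic points of strata. That assertion is true, but it is proved by the same initial-form argument, and you do not need it. Once $f^*$ is regular on $\gr_{\mfrak}$, Krull's intersection theorem gives $\ord(fr)=1+\ord(r)$ for every $r\ne0$, so $f$ is a nonzerodivisor.
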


Since we prove that such a stratification always exists (see \zcref{thm:stratification}), the second part of the theorem implies that a positive density of hypersurface sections preserves multiplicity at all points.

Similar local arguments apply to the closed-point sieves in \cite[Theorem~1.2]{bucur-kedlaya}, \cite[Theorem~1.1]{poonen:subvar}, \cite[Theorem~1.1]{gunther}, \cite[Theorems~1.1 and 1.2]{wutz2016}, and \cite[Theorem~9.10]{erman-wood}.
The resulting complete-intersection, containing a given subscheme, and semiample statements are proved in \zcref{thm:bk-mult,thm:gw-mult,thm:semiample-mult}, respectively.

\section{Acknowledgments}

We would like to thank our PhD advisors --- for the first-named author, Christopher Hacon and Karl Schwede, and for the second-named author, Sean Howe --- for their constant encouragement, unwavering support, inspiring teachings, and infinite patience.
The first-named author is grateful to Ilya Smirnov for pointing out \cite[Ch.\,0, (2.1.1)]{bennett}, which played a crucial role in this paper.
The first-named author would like to thank Daniel Apsley, Vasudevan Srinivas, and Vijaylaxmi Trivedi for their interest in this project and for many interesting discussions.
Rahul Ajit was partially supported by a Summer Research Fellowship at the University of Utah, NSF research grants \href{https://www.nsf.gov/awardsearch/show-award?AWD_ID=2301374}{DMS-2301374} and \href{https://www.nsf.gov/awardsearch/show-award?AWD_ID=2501903}{DMS-2501903}, and by a grant from the Simons Foundation, SFI-MPS-MOV-00006719-07, while working on this project.
Matthew Bertucci was partially supported during the preparation of this work by the University of Utah's NSF Research Training Grant \href{https://www.nsf.gov/awardsearch/showAward?AWD_ID=1840190}{\#1840190}.

\section{Preliminaries}
\label{sec:prelim}

We begin by fixing notation and providing definitions of the necessary concepts from commutative algebra.

Let
\[
  S_d=H^0(\bP^n_{\Fq},\sheaf O_{\bP^n}(d))
  \qquad
  \text{and}
  \qquad
  S_{\mathrm{homog}}=\bigcup_{d\ge0}S_d.
\]
The density of a subset $\cP\subseteq S_{\mathrm{homog}}$ is
\[
  \mu(\cP)=\lim_{d\to\infty}\frac{\#(\cP\cap S_d)}{\#S_d},
\]
when the limit exists.

For a scheme $X$ of finite type over a field $k$, write $|X|$ for its closed points.
For $\xi\in |X|$, set $\deg(\xi)=[\kappa(\xi):k]$.

When $k=\Fq$, the zeta function of $X$ is
\[
 \zeta_X(s)=\prod_{\xi\in|X|}
 \bigl(1-q^{-s\deg(\xi)}\bigr)^{-1}
\]
which is defined when $\Re(s)>\dim X$.

\begin{definition}[{\cite[\S\,14]{matsumura}}]
Let $(R,\mfrak)$ be a Noetherian local ring.
The \emph{Hilbert--Samuel function} of $R$ is
    \[ \chi_R(\ell) = \lambda_R(R/\mfrak^\ell)\]
where $\lambda_R$ denotes length as an $R$-module.
There is a polynomial $P_R$, the \emph{Hilbert--Samuel polynomial} of $R$, that agrees with $\chi_R$ for $\ell\gg 0$, and has degree $\dim R$.
Define the \emph{(Hilbert--Samuel) multiplicity} $e(R)$ of $R$ to be $(\dim R)!$ times the leading coefficient of $P_R$.

For a Noetherian scheme $X$ and point $\xi\in X$, the \emph{multiplicity} $e_\xi(X)$ of $X$ at $\xi$ is the multiplicity of the local ring $\sheaf{O}_{X,\xi}$.
\end{definition}

\begin{definition}
Let $(R,\mfrak)$ be a Noetherian local ring and $\afrak\subseteq R$ an ideal.
The \emph{associated graded ring} of $\afrak$ is
  \[
    \gr_{\afrak}(R) = \bigoplus_{\nu\geq 0} \afrak^\nu/\afrak^{\nu+1}
  \]
where by convention, $\afrak^0=R$.
When $\afrak=\mfrak$, we call $\gr_{\mfrak}(R)$ the associated graded ring of $R$.

For $f\in R$, set $\ord_{\afrak}(f)=\sup\{\nu\mid f\in\afrak^\nu\}$.
(By the Krull intersection theorem, this is finite for $f\neq 0$.)
The \emph{initial form} $f^*$ of $f$ is the image of $f$ in $\gr_{\afrak}(R)$, i.e., the element $f^*=f+\afrak^{\ord_{\afrak}(f)+1}$.

If $I\subseteq R$ is an ideal, define $I^*$ to be the homogeneous ideal of $\gr_{\afrak}(R)$ generated by the initial forms of elements of $I$.
\end{definition}

\begin{remark}
If $I=(f_1,\dots,f_c)$, then in general we only have $(f_1^*,\dots,f_c^*)\subset I^*$, not equality.
In the principal case $I=(f)$, if $R$ is regular, then $(f)^*=(f^*)$ but this is not true in general; for a detailed account of the relationship between $I$ and $I^*$, see \cite{init-forms} and \cite{eq-tangent-cones}.
\end{remark}

For a Noetherian scheme $X$, $\xi\in X$, and $f\in\sheaf O_{X,\xi}$, write
\[
  \ord_\xi(f)=\ord_{\mfrak_{X,\xi}}(f).
\]
Multiplication by a unit does not change this order, hence it is defined for a local equation of an effective Cartier divisor or of the zero scheme of a section of an invertible sheaf.

\begin{lemma}[{cf.\ \cite[Ch.\,0, (1.1.1)]{bennett}}]
\label{lem:quotient-surj}
Let $(R,\mathfrak m)$ be a Noetherian local ring and let $I\subseteq\mathfrak m$ be an ideal.
The quotient map $R\to R/I$ induces a canonical graded isomorphism
\[
 \gr_\mathfrak m(R)/I^*
 \xrightarrow{\ \sim\ }
 \gr_{\mathfrak m/I}(R/I).
\]
\end{lemma}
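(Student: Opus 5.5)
We will compare the two associated graded rings degree by degree. Write $\bar R=R/I$ and $\bar{\mfrak}=\mfrak/I$. For each $\nu\ge 0$ we have $\bar{\mfrak}^\nu=(\mfrak^\nu+I)/I$, so the degree-$\nu$ piece of $\gr_{\bar{\mfrak}}(\bar R)$ is
\[
  \bar{\mfrak}^\nu/\bar{\mfrak}^{\nu+1}
  \cong (\mfrak^\nu+I)/(\mfrak^{\nu+1}+I)
  \cong \mfrak^\nu/\bigl(\mfrak^\nu\cap(\mfrak^{\nu+1}+I)\bigr)
  = \mfrak^\nu/\bigl(\mfrak^{\nu+1}+(I\cap\mfrak^\nu)\bigr),
\]
where the last equality is the modular law, which applies because $\mfrak^{\nu+1}\subseteq\mfrak^\nu$. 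So the natural map $\mfrak^\nu/\mfrak^{\nu+1}\to\bar{\mfrak}^\nu/\bar{\mfrak}^{\nu+1}$, induced by $R\to\bar R$, is surjective. Its kernel is $(\mfrak^{\nu+1}+(I\cap\mfrak^\nu))/\mfrak^{\nu+1}$.

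These maps are compatible with multiplication, since they are induced by a ring map sending $\mfrak^\nu$ into $\bar{\mfrak}^\nu$. Hence they assemble into a surjective graded ring homomorphism $\gr_\mfrak(R)\to\gr_{\bar{\mfrak}}(\bar R)$, and it remains to show that its kernel is exactly $I^*$.

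Since $I^*$ is homogeneous, it suffices to identify the degree-$\nu$ part $(I^*)_\nu$ with $(\mfrak^{\nu+1}+(I\cap\mfrak^\nu))/\mfrak^{\nu+1}$.

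\emph{Kernel contained in $I^*$.} Take $g\in I\cap\mfrak^\nu$ whose class in degree $\nu$ is nonzero. Then $g\notin\mfrak^{\nu+1}$, so $\ord_\mfrak(g)=\nu$ and that class is exactly the initial form $g^*\in I^*$.

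\emph{$I^*$ contained in the kernel.} First, every initial form $f^*$ with $f\in I$ lies in the kernel: if $\ord(f)=\mu$, then $f\in I\cap\mfrak^\mu$. The kernel is a homogeneous ideal, being the kernel of a graded ring map, and $I^*$ is the ideal generated by such initial forms, so $I^*$ lies in the kernel. Concretely, a degree-$\nu$ element of $I^*$ is a sum of terms $a^*f^*$ with $a^*$ homogeneous of degree $\nu-\ord(f)$. Each such term is the class of $af\in I\cap\mfrak^\nu$, which again lies in the kernel.

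The only delicate point is the bookkeeping step above: a homogeneous element of $I^*$ need not itself be an initial form, so one has to argue through the ideal structure rather than elementwise. The hypothesis $I\subseteq\mfrak$ ensures that $\bar R$ is local with maximal ideal $\bar{\mfrak}$, which is needed for the target to be the associated graded ring of $R/I$. It also makes the degree-$0$ pieces agree: $R/\mfrak\cong\bar R/\bar{\mfrak}$, matching the fact that $(I^*)_0=0$.
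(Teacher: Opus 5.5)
Your argument is correct and is essentially the paper's proof: you identify the kernel of $\mfrak^\nu/\mfrak^{\nu+1}\to\bar{\mfrak}^\nu/\bar{\mfrak}^{\nu+1}$ as $\bigl(\mfrak^{\nu+1}+(I\cap\mfrak^\nu)\bigr)/\mfrak^{\nu+1}$, match it with $(I^*)_\nu$ in both directions, and represent each product $a^*f^*$ by $af\in I\cap\mfrak^\nu$. One small correction to your closing remark: your own computation shows that every nonzero homogeneous element of $I^*$ \emph{is} the initial form of some element of $I$, so ``need not be an initial form'' should read ``is not a priori known to be an initial form.''
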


\begin{proof}
In degree $n$, the induced map is
\[
 \frac{\mathfrak m^n}{\mathfrak m^{n+1}}
 \longrightarrow
 \frac{\mathfrak m^n+I}{\mathfrak m^{n+1}+I},
\]
and its kernel is
\[
 \frac{(I\cap\mathfrak m^n)+\mathfrak m^{n+1}}{\mathfrak m^{n+1}}.
\]
This is the degree-$n$ component of $I^*$.
Indeed, if $x\in I\cap\mathfrak m^n$, then its degree-$n$ class is either zero or $x^*$.
Conversely, a homogeneous element of $(I^*)_n$ is a sum of products $a^*x^*$ with $x\in I$ and
$\operatorname{ord}_\mathfrak m(a)+\operatorname{ord}_\mathfrak m(x)=n$; each product is represented by $ax\in I\cap\mathfrak m^n$.
Thus the kernel is $I^*$ degree by degree.
\end{proof}

\begin{lemma}
\label{lem:initial-regular-sequence}
Let $(R,\mfrak)$ be a Noetherian local ring, let $f_1,\ldots,f_c\in\mfrak$, and set
$J=(f_1,\ldots,f_c)$.
Suppose that each $f_i$ has $\mfrak$-adic order one and that
$f_1^*,\ldots,f_c^*$ is a regular sequence on
$G=\gr_\mfrak(R)$.
Then:
\begin{enumerate}[label=\textup{(\alph*)}]
\item $f_1,\ldots,f_c$ is an $R$-regular sequence;
\item $J^*=(f_1^*,\ldots,f_c^*)$;
\item the canonical map induces an isomorphism
\[
 G/(f_1^*,\ldots,f_c^*)
 \xrightarrow{\ \sim\ }
 \gr_{\mfrak/J}(R/J);
\]
\item $e(R/J)=e(R)$.
\end{enumerate}
\end{lemma}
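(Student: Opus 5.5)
We argue by induction on $c$, reducing everything to the case of a single element and then iterating. The key single-element fact is: if $(R,\mfrak)$ is Noetherian local, $f\in\mfrak$ with $\ord_\mfrak(f)=1$, and $f^*$ is a nonzerodivisor on $G=\gr_\mfrak(R)$, then $f$ is a nonzerodivisor on $R$ and $(f)^*=(f^*)$.

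For the first claim, suppose $g\neq0$. By Krull's intersection theorem $g$ has finite order $\nu$, so $g^*\neq0$. Then $f^*g^*\neq0$ in degree $\nu+1$. This means $fg\notin\mfrak^{\nu+2}$, so $fg\neq0$, and moreover $(fg)^*=f^*g^*$.

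For the second claim, any nonzero element of $(f)$ has the form $fg$ with $g\neq 0$. Its initial form is $(fg)^*=f^*g^*\in(f^*)$. Hence $(f)^*\subseteq(f^*)$, and the reverse inclusion is trivial.

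Now \zcref{lem:quotient-surj} gives $G/(f^*)\cong\gr_{\mfrak/(f)}(R/(f))$.

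For the induction, set $R_1=R/(f_1)$. Under the isomorphism $G/(f_1^*)\cong\gr(R_1)$, the image of $f_i^*$ (for $i\ge2$) is the initial form of $\bar f_i$ in $\gr(R_1)$. This holds provided $\bar f_i$ still has order one, which is checked as follows. The degree-one class of $f_i$ maps to the class of $f_i^*$ modulo $(f_1^*)$. That class is nonzero, since it is a nonzerodivisor on the nonzero ring $G/(f_1^*,\dots,f_{i-1}^*)$, or we may simply note that regular sequence elements are non-units in a proper quotient.

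Consequently $\bar f_2^*,\dots,\bar f_c^*$ is a regular sequence on $\gr(R_1)$, and the inductive hypothesis applies to $R_1$. This yields (a), since $f_1$ is regular on $R$ and $\bar f_2,\dots,\bar f_c$ is regular on $R_1$. Composing the isomorphisms gives (c):
\[
G/(f_1^*,\dots,f_c^*)\cong \gr(R_1)/(\bar f_2^*,\dots,\bar f_c^*)\cong\gr(R/J).
\]
Finally, (b) follows by comparing (c) with \zcref{lem:quotient-surj}. Both isomorphisms are induced by the canonical map, so their kernels agree: $J^*=(f_1^*,\dots,f_c^*)$.

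For (d), multiplicity is determined by $\gr_\mfrak(R)$. Indeed, $\chi_R(\ell)=\sum_{\nu<\ell}\dim_k G_\nu$, so $e(R)$ equals the multiplicity (normalized leading coefficient) of the standard graded $k$-algebra $G$, with $\dim R=\dim G$. If $x$ is a homogeneous nonzerodivisor of degree one on a graded module $M$, then the Hilbert series satisfies $H_{M/xM}(t)=(1-t)H_M(t)$. Hence $M/xM$ has dimension one less and the same multiplicity: the numerator polynomial $h(t)$ is unchanged, and $e=h(1)$. Iterating with $f_1^*,\dots,f_c^*$ and using (c), we get $e(R/J)=e(\gr(R/J))=e(G)=e(R)$.

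The main obstacle is the bookkeeping in the induction: checking that initial forms in the quotient correspond to the images of the $f_i^*$. This needs the order-one hypothesis together with the nonvanishing of $f_i^*$ modulo earlier elements. The fact that a regular sequence on $G$ is proper (so $G/(f^*)\neq 0$) handles this.
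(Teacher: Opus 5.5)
Your proof is correct and follows essentially the same route as the paper. You reduce to one element via $\ord_\mfrak(fg)=1+\ord_\mfrak(g)$, obtain $(f)^*=(f^*)$ and the graded isomorphism from \zcref{lem:quotient-surj}, iterate, read off (b) by comparing kernels, and get (d) from the Hilbert function of the tangent cone. The differences are cosmetic: you explicitly check that each $\bar f_i$ still has order one in $R/(f_1)$, which the paper passes over, and you phrase (d) through the unchanged Hilbert-series numerator rather than finite differences of $\chi_R$.
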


\begin{proof}
The isomorphism in (c) is the degree-one case of \cite[Theorem~2.3]{init-forms}.
We include the elementary one-element argument, then apply the result successively.

So, $c=1$.
Write $f=f_1$.
By the Krull intersection theorem,
$\bigcap_{n\geq 0}\mathfrak m^n=0$
(\cite[\href{https://stacks.math.columbia.edu/tag/00IP}{Tag~00IP}]{stacks-project}); hence every nonzero element of $R$ has finite $\mathfrak m$-adic order.
Since $f^*$ is a nonzerodivisor on $G$, for every $0\neq r\in R$,
\[
 \operatorname{ord}_\mathfrak m(fr)
 =1+\operatorname{ord}_\mathfrak m(r).
\]
In particular, $f$ is a nonzerodivisor.
The same equality gives, for every $n\geq1$,
\[
 (f)\cap\mathfrak m^n=f\mathfrak m^{n-1}.
\]
Consequently $(f)^*=(f^*)$, and \zcref{lem:quotient-surj} gives
\[
 \gr_{\mathfrak m/(f)}(R/(f))
 \simeq G/(f^*).
\]
The image of $f_2^*$ is therefore the initial form of the image of $f_2$ in $R/(f)$ and is a nonzerodivisor on the associated graded ring of that quotient.
After $c$ steps we obtain
\[
 G/(f_1^*,\ldots,f_c^*)\cong\gr_{\mfrak/J}(R/J).
\]
Comparing the kernel of $G\to\gr_{\mfrak/J}(R/J)$ with \zcref{lem:quotient-surj} gives
$J^*=(f_1^*,\ldots,f_c^*)$.
This proves (a)--(c).

For completeness, put
\[
 h_R(n)=\ell_R(\mathfrak m^n/\mathfrak m^{n+1}),
 \qquad h_R(-1)=0.
\]
The exact sequence defined by multiplication by the degree-one nonzerodivisor $f^*$ gives
\[
 h_{R/(f)}(n)=h_R(n)-h_R(n-1)
 \qquad(n\geq0).
\]
Thus, with
$\chi_R(n)=\ell_R(R/\mathfrak m^{n+1})$ and $\chi_R(-1)=0$,
\[
 \chi_{R/(f)}(n)
 =\chi_R(n)-\chi_R(n-1).
\]
Taking Hilbert polynomials shows $e(R/(f))=e(R)$; induction gives (d).
\end{proof}

\begin{corollary}\label{cor:hilbert-samuel-finite-differences}
In the notation of \zcref{lem:initial-regular-sequence}, define
\[
 \chi_R(n)=\ell_R(R/\mathfrak m^{n+1}),
 \qquad \chi_R(n)=0\quad(n<0),
\]
and $(\Delta a)(n)=a(n)-a(n-1)$.
Then, for every $n\geq0$,
\[
 \chi_{R/J}(n)=\Delta^c\chi_R(n).
\]
Equivalently,
\[
 \sum_{n\geq0}
 \ell_R\!\left(\frac{(\mathfrak m/J)^n}{(\mathfrak m/J)^{n+1}}\right)t^n
 =(1-t)^c
 \sum_{n\geq0}
 \ell_R\!\left(\frac{\mathfrak m^n}{\mathfrak m^{n+1}}\right)t^n.
\]
If $d=\dim R$, then
\[
 e_j(R/J)=e_j(R)\qquad(0\leq j\leq d-c).
\]
\end{corollary}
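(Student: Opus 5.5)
We argue by induction on $c$, reducing everything to the case $c=1$ handled in the proof of \zcref{lem:initial-regular-sequence}. Set $h_R(n)=\ell_R(\mfrak^n/\mfrak^{n+1})$ and $h_R(n)=0$ for $n<0$. The proof of \zcref{lem:initial-regular-sequence} identifies $\gr_{\mfrak/(f_1)}(R/(f_1))$ with $G/(f_1^*)$. Under this identification the images of $f_2^*,\dots,f_c^*$ remain a regular sequence of degree-one forms, and they are the initial forms of the images of $f_2,\dots,f_c$. So the hypotheses pass to $R/(f_1)$ with $c-1$ elements, and iterating gives the graded isomorphism of \zcref{lem:initial-regular-sequence}(c).

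Next we compute Hilbert functions. For a degree-one nonzerodivisor $f^*$ on the graded ring $G$, the exact sequence
\[
 0\to G(-1)\xrightarrow{\,f^*\,}G\to G/(f^*)\to 0
\]
gives $h_{R/(f)}(n)=h_R(n)-h_R(n-1)$ for all $n\ge0$, that is, $h_{R/(f)}=\Delta h_R$. Lengths over $R$ and over the quotient agree, since these modules are killed by the ideal. Iterating $c$ times gives $h_{R/J}=\Delta^c h_R$. On generating functions this is exactly the stated identity with the factor $(1-t)^c$.

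Summing gives $\chi_R(n)=\sum_{k\le n}h_R(k)$, so $\chi=\Sigma h$ where $\Sigma$ is the partial-sum operator. Since $\Delta\Sigma=\Sigma\Delta$ on functions vanishing for negative arguments, $\chi_{R/J}=\Sigma\Delta^c h_R=\Delta^c\Sigma h_R=\Delta^c\chi_R$. The zero conventions for $n<0$ are exactly what make the boundary terms vanish, so the identity holds for every $n\ge0$ and not only for $n\gg0$.

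For the Hilbert coefficients, write the Hilbert–Samuel polynomial as
\[
 P_R(n)=\sum_{j=0}^{d}(-1)^j e_j(R)\binom{n+d-j}{d-j}
\]
for $n\gg0$. Since $\Delta\binom{n+m}{m}=\binom{n+m-1}{m-1}$, we get
\[
 \Delta^c P_R(n)=\sum_{j}(-1)^j e_j(R)\binom{n+d-c-j}{d-c-j},
\]
where terms with $d-j<c$ vanish; those binomials become $0$ once $m<0$. By \zcref{lem:initial-regular-sequence}(a) the $f_i$ form a regular sequence, so $\dim R/J=d-c$. Matching $\Delta^c P_R$ with $P_{R/J}$ in the same binomial basis then gives $e_j(R/J)=e_j(R)$ for $0\le j\le d-c$.

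The only real subtlety is bookkeeping: the Hilbert function must be indexed so that the $n<0$ conventions make $\Delta$ commute with summation at the boundary. Once that is set up, the argument is routine.
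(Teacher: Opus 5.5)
Your proposal is correct and follows essentially the same route as the paper: the Hilbert-series identity comes from the degree-one regular sequence $f_1^*,\dots,f_c^*$ together with the isomorphism of \zcref{lem:initial-regular-sequence}(c), cumulative sums then pass to $\chi$, and the identity $\Delta\binom{n+a}{a}=\binom{n+a-1}{a-1}$ gives the statement on the Hilbert coefficients. Your inductive re-derivation of part (c) is redundant, since that is already part of the lemma, but you usefully make explicit a step the paper leaves implicit when matching binomial bases, namely that $\dim R/J=d-c$ because the $f_i$ form a regular sequence.
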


\begin{proof}
The Hilbert-series identity follows from \zcref{lem:initial-regular-sequence}(c), since each $f_i^*$ has degree one.
Taking cumulative sums gives the finite-difference formula.
For $n\gg0$, write
\[
 \chi_R(n)=\sum_{j=0}^d(-1)^j e_j(R)
 \binom{n+d-j}{d-j}.
\]
Applying $\Delta^c$ and using
$\Delta\binom{n+a}{a}=\binom{n+a-1}{a-1}$
proves the assertion about the Hilbert coefficients.
\end{proof}

\begin{lemma}[{\cite[\href{https://stacks.math.columbia.edu/tag/00NQ}{Tag~00NQ}]{stacks-project}}]
\label{lem:smooth-first-jets}
Let $Y$ be regular, let $\xi\in Y$, and let $g_1,\ldots,g_c\in\mfrak_{Y,\xi}$.
The following are equivalent:
\begin{enumerate}[label=\textup{(\roman*)}]
\item the classes of $g_1,\ldots,g_c$ in $\mfrak_{Y,\xi}/\mfrak_{Y,\xi}^2$ are linearly independent;
\item $g_1,\ldots,g_c$ extend to a regular system of parameters of $\sheaf O_{Y,\xi}$;
\item $V(g_1,\ldots,g_c)$ is regular of codimension $c$ at $\xi$.
\end{enumerate}
\end{lemma}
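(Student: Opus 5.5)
The plan is to reduce everything to the regular local ring $R=\sheaf O_{Y,\xi}$, where $Y$ is regular, $\xi\in Y$, and $\mfrak=\mfrak_{Y,\xi}$. There the associated graded ring $G=\gr_\mfrak(R)$ is the polynomial ring $\Sym_{\kappa(\xi)}(\mfrak/\mfrak^2)$, by the standard identification for regular local rings. I will prove the cycle (i)$\Rightarrow$(ii)$\Rightarrow$(iii)$\Rightarrow$(i), with the implications running through this polynomial structure and through \zcref{lem:initial-regular-sequence}.

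For (i)$\Rightarrow$(ii), let $d=\dim R=\dim_{\kappa(\xi)}\mfrak/\mfrak^2$. If the classes $\bar g_1,\dots,\bar g_c$ are linearly independent, extend them to a basis $\bar g_1,\dots,\bar g_c,\bar h_{c+1},\dots,\bar h_d$ of $\mfrak/\mfrak^2$ and lift the $\bar h_j$ to elements $h_j\in\mfrak$. By Nakayama the family $g_1,\dots,g_c,h_{c+1},\dots,h_d$ generates $\mfrak$. It has $d$ elements, so it is a regular system of parameters.

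For (ii)$\Rightarrow$(iii), each $g_i$ has order one, and its initial form $g_i^*$ is the linear form $\bar g_i$ in $G$. Linearly independent linear forms in a polynomial ring form a regular sequence. Then \zcref{lem:initial-regular-sequence}(c) gives
\[
\gr(R/J)\cong G/(g_1^*,\dots,g_c^*),
\]
where $J=(g_1,\dots,g_c)$. This quotient is a polynomial ring in $d-c$ variables, so $R/J$ has dimension $d-c$ and embedding dimension $d-c$; hence it is regular of codimension $c$. Alternatively, one can argue directly: $R/J$ has maximal ideal generated by the images of the $h_j$, so $\edim(R/J)\le d-c$. Since $\dim(R/J)\ge d-c$ by Krull's height theorem, equality holds and $R/J$ is regular.

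For (iii)$\Rightarrow$(i), suppose $R/J$ is regular of dimension $d-c$. Then
\[
\dim_{\kappa(\xi)}\bigl(\mfrak/(J+\mfrak^2)\bigr)=d-c.
\]
The exact sequence
\[
(J+\mfrak^2)/\mfrak^2\to\mfrak/\mfrak^2\to\mfrak/(J+\mfrak^2)\to0
\]
shows that the span of $\bar g_1,\dots,\bar g_c$ has dimension $d-(d-c)=c$. A span of dimension $c$ generated by $c$ vectors forces them to be linearly independent. The only delicate point is interpreting ``codimension $c$ at $\xi$'' as $\dim(R/J)=\dim R-c$, which is legitimate since $R$ is regular and hence catenary and equidimensional. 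I do not expect a real obstacle; this is essentially the cited Stacks tag.
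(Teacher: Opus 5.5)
Your proof is correct, and it is the standard argument behind the Stacks Project tag that the paper cites in place of a proof (the paper gives no argument of its own): Nakayama for (i)$\Rightarrow$(ii), the squeeze $\dim(R/J)\le\edim(R/J)\le d-c\le\dim(R/J)$ for (ii)$\Rightarrow$(iii), and the cotangent-space exact sequence for (iii)$\Rightarrow$(i). Your graded route for (ii)$\Rightarrow$(iii), via $\gr_\mfrak(R)\cong\Sym_{\kappa(\xi)}(\mfrak/\mfrak^2)$ and \zcref{lem:initial-regular-sequence}, is also valid and not circular, since that lemma comes earlier in the paper and does not use this one.
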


\section{Normal flatness and permissible stratifications}
\label{sec:permiss}

We recall some results of Hironaka from \cite{HironakaI,HironakaII} on normal flatness and permissible primes, though generally follow Bennett's notation from \cite{bennett}.

\begin{definition}[{\cite[Definition~1]{HironakaI}, \cite[Ch.\,0, \S\,2.1]{bennett}}]
\label{defi:normal-flatness}
Let $X$ be a Noetherian scheme, let $Y\subseteq X$ be a closed subscheme, and let $\xi\in Y$.
If $I_{Y,\xi}\subseteq\sheaf O_{X,\xi}$ is the defining ideal, then $X$ is \emph{normally flat along $Y$ at $\xi$} if $Y$ is regular at $\xi$ and $\gr_{I_{Y,\xi}}(\sheaf O_{X,\xi})$
is flat over $\sheaf O_{Y,\xi}$. (Equivalently, every $I^n/I^{n+1}$ is a free $R/I$-module; see
\cite[\href{https://stacks.math.columbia.edu/tag/00NZ}{Tag~00NZ}]{stacks-project}.)
\end{definition}

\begin{definition}[{\cite[Definition~8]{HironakaII}, \cite[Ch.\,0, \S\,2.1]{bennett}}]
\label{defi:permissible-prime}
Let $(R,\mfrak)$ be a Noetherian local ring.
A prime ideal $\pfrak\subseteq R$ is \emph{permissible} if $R/\pfrak$ is regular and $\gr_\pfrak(R)$ is flat over $R/\pfrak$.
\end{definition}

\begin{theorem}[{\cite[Proposition~1, Theorem~2]{HironakaI}, \cite[Ch.\,0, (2.1.1)]{bennett}}]
\label{thm:Bennett}
Let $(R,\mfrak,\kappa)$ be Noetherian local and let $\pfrak\subseteq R$ be prime with $R/\pfrak$ regular.
Set
\[
  A=\gr_\pfrak(R)/\mfrak\gr_\pfrak(R)
  \qquad
  \text{and}
  \qquad
  B=\gr_{\mfrak/\pfrak}(R/\pfrak).
\]
Then $\pfrak$ is permissible if and only if the quasi-canonical map
\[
  A\otimes_\kappa B\longrightarrow\gr_\mfrak(R)
\]
is an isomorphism.
If $s=\dim(R/\pfrak)$, then
\[
  B\cong\kappa[u_1,\ldots,u_s]
  \qquad
  \text{and}
  \qquad
  \deg u_j=1.
\]
\end{theorem}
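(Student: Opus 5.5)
The plan is to treat the statement in two parts: the structure of $B$, and then the equivalence between permissibility and the isomorphism $A\otimes_\kappa B\to\gr_\mfrak(R)$.

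\textbf{Structure of $B$ and the map.} Since $R/\pfrak$ is regular local of dimension $s$, its associated graded ring is a polynomial ring over $\kappa$ on a regular system of parameters. This gives $B\cong\kappa[u_1,\ldots,u_s]$ with $\deg u_j=1$.

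To construct the quasi-canonical map, I lift $u_1,\ldots,u_s$ to elements $x_1,\ldots,x_s\in\mfrak$. I send $u_j$ to the class of $x_j$ in $\mfrak/\mfrak^2$. A class in $\pfrak^\nu/(\pfrak^{\nu+1}+\mfrak\pfrak^\nu)$ goes to its initial form in $\gr_\mfrak(R)$, with the degree shifted appropriately via the $\mfrak$-order. More precisely, one uses the bigraded structure: $\gr_\mfrak(R)$ carries a filtration by the images of $\pfrak^\nu$. The map is $\kappa$-linear and graded after one fixes the lifts, which is why it is "quasi-canonical".

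\textbf{Proof of the equivalence.} I would prove both directions through the standard filtration
\[
\mfrak^n=\sum_{a+b=n}\pfrak^a\,(x)^b+\mfrak^{n+1}.
\]
This holds because $\mfrak=\pfrak+(x_1,\ldots,x_s)$, and it induces a surjection $A\otimes_\kappa B\to\gr_\mfrak(R)$ in every case.

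($\Rightarrow$) If $\gr_\pfrak(R)$ is flat over the regular ring $R/\pfrak$, then each $\pfrak^a/\pfrak^{a+1}$ is free over $R/\pfrak$. Hence $\pfrak^a/\pfrak^{a+1}\otimes\gr(R/\pfrak)$ computes the associated graded of $\pfrak^a/\pfrak^{a+1}$ with respect to $\mfrak$. Choosing a basis turns this into $\dim_\kappa(A_a)$ copies of $B$, shifted by $a$.

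Summing over $a$ then yields equality of Hilbert functions,
\[
\ell(\mfrak^n/\mfrak^{n+1})=\sum_{a+b=n}\dim_\kappa A_a\,\dim_\kappa B_b.
\]
I would check this equality carefully by computing $\ell(R/\mfrak^{n})$ through the $\pfrak$-adic filtration and using freeness. A surjection between finite-length pieces of equal length is an isomorphism.

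($\Leftarrow$) Conversely, the isomorphism forces the Hilbert functions to agree. I would then show that this forces each $M_a=\pfrak^a/\pfrak^{a+1}$ to have $\mathrm{gr}_\mfrak$-Hilbert function equal to $\mu(M_a)$ times that of $R/\pfrak$. Over a regular local ring, a finitely generated module $M$ whose Hilbert–Samuel function equals that of $(R/\pfrak)^{\mu(M)}$ must be free. The reason is that a nonzero kernel of $(R/\pfrak)^{\mu}\to M$ would strictly decrease the length at some level, since $R/\pfrak$ is a domain and the kernel has full dimension. The main obstacle is this converse: disentangling the total Hilbert function into per-$a$ contributions. I would handle it by inducting on $a$, using the exact sequences $0\to\pfrak^{a+1}\to\pfrak^{a}\to M_a\to0$ intersected with powers of $\mfrak$.
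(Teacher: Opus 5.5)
The paper gives no proof of this statement; it is quoted from Hironaka and Bennett, so your outline has to stand on its own. Several parts are fine: the structure of $B$, the surjectivity of $A\otimes_\kappa B\to\gr_\mfrak(R)$ from $\mfrak=\pfrak+(x_1,\dots,x_s)$, and the direction from the isomorphism to flatness.

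Two side remarks. First, the map on $A$ is just $\pfrak^\nu/\mfrak\pfrak^\nu\to\mfrak^\nu/\mfrak^{\nu+1}$, with no shift by the $\mfrak$-order; such a shift would not give a graded linear map. Second, the converse needs no induction on $a$. Put $M_a=\pfrak^a/\pfrak^{a+1}$ and $\alpha_a=\dim_\kappa A_a$. The $a$-th piece of the $\pfrak$-adic filtration of $R/\mfrak^{n+1}$ is a quotient of $M_a/\mfrak^{n+1-a}M_a$, so its length is at most $\alpha_a\binom{n-a+s}{s}$. The isomorphism makes the sum of these bounds equal to $\ell(R/\mfrak^{n+1})$, so each bound is an equality, and your Krull-intersection argument then gives freeness.

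The gap is in the direction from flatness to the isomorphism, which is where the content of the theorem lies. Freeness of $M_a$ controls its own filtration $\mfrak^kM_a$. But the $\pfrak$-adic filtration of $R/\mfrak^{n+1}$ sees $M_a$ modulo the image of $\pfrak^a\cap\mfrak^{n+1}$, which contains $\mfrak^{n+1-a}M_a$ and could a priori be larger. Summing over $a$ therefore only reproduces the inequality $\ell(\mfrak^n/\mfrak^{n+1})\le\sum_{a+b=n}\dim A_a\dim B_b$, which you already have from surjectivity. No bookkeeping upgrades it to equality without proving $\pfrak^a\cap\mfrak^{n+1}\subseteq\mfrak^{n+1-a}\pfrak^a+\pfrak^{a+1}$, and that inclusion is equivalent to injectivity of the quasi-canonical map.

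One way to supply it is as follows.
\begin{enumerate}[label=\textup{(\arabic*)}]
\item Use that $\bar{x}_1,\dots,\bar{x}_s$ is quasi-regular on each free $M_c$: if $\sum_{|\gamma|=N}m_\gamma\bar{x}^{\gamma}\in\bar{\mfrak}^{N+1}M_c$, then every $m_\gamma\in\bar{\mfrak}M_c$.
\item Take a degree-$n$ kernel element $\sum\alpha_{a,\beta}u^\beta$ with minimal $a_0$, and lift it to $y\in\pfrak^{a_0}\cap\mfrak^{n+1}$. Write $I=(x_1,\dots,x_s)$ and $J_c=\sum_{c'\ge c}\pfrak^{c'}I^{n+1-c'}$, so that $y\in J_0=\mfrak^{n+1}$.
\item For $c<a_0$ with $y\in J_c$, the $\pfrak^cI^{n+1-c}$-part of $y$ lies in $\pfrak^{c+1}$. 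Quasi-regularity on $M_c$ puts its coefficients in $I\pfrak^c+\pfrak^{c+1}$, which moves that part into $\pfrak^cI^{n+2-c}$ modulo $J_{c+1}$. Iterating gives $y\in\bigcap_N(\mfrak^N+J_{c+1})=J_{c+1}$ by Krull's intersection theorem on $R/J_{c+1}$.
\item Once $y\in J_{a_0}$, reduce modulo $\pfrak^{a_0+1}$ and apply quasi-regularity on $M_{a_0}$ in degree $n-a_0$. This gives $\alpha_{a_0,\beta}=0$ in $A_{a_0}$, contradicting the choice of $a_0$.
\end{enumerate}
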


\begin{theorem}[{\cite[Theorem~1]{HironakaI}, \cite[Ch.\,0, (2.2.1)]{bennett}}]
\label{thm:generic-nf}
Let $X$ be of finite type over a field and let $Z\subseteq X$ be an integral closed subscheme.
There is a nonempty open subset
\[
  U\subseteq\Reg(Z)
\]
such that $X$ is normally flat along $Z$ at every point of $U$.
\end{theorem}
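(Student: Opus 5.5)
The plan is to reduce to an affine, purely algebraic statement and then invoke Grothendieck's generic freeness lemma for the associated graded algebra.

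\emph{Step 1: reduction to the affine case.} Since $Z$ is integral, any affine open $V=\Spec A\subseteq X$ meeting $Z$ contains the generic point of $Z$. Hence $Z\cap V$ is dense in $Z$, and a nonempty open subset of $Z\cap V$ is a nonempty open subset of $Z$. So we may assume $X=\Spec A$ with $A$ a finitely generated algebra over a field, and $Z=V(\pfrak)$ for a prime $\pfrak\subseteq A$. Put $D=A/\pfrak$, a domain of finite type over the field.

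The regular locus $\Reg(Z)$ is open, because rings of finite type over a field are excellent (in particular J-2). It is nonempty because the local ring of $Z$ at its generic point is the field $\operatorname{Frac}(D)$. So it suffices to find a nonzero $g\in D$ such that $X$ is normally flat along $Z$ at every point of $D(g)\cap\Reg(Z)$. Since $D(g)\cap\Reg(Z)$ is the intersection of two nonempty opens of the irreducible space $Z$, it is then a nonempty open set $U$, as required.

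\emph{Step 2: generic freeness.} Consider the graded ring $G=\gr_\pfrak(A)=\bigoplus_{n\ge0}\pfrak^n/\pfrak^{n+1}$. It is a $D$-algebra generated in degree one by the images of finitely many generators of $\pfrak$, so it is of finite type over the Noetherian domain $D$. By generic freeness (Stacks, Tag~051R), there is a nonzero $g\in D$ such that $G_g$ is a free $D_g$-module. Lift $g$ to $A$; I keep the name $g$.

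\emph{Step 3: compatibility with localization.} Localization is exact and commutes with taking powers of ideals, so
\[
  (\pfrak^n/\pfrak^{n+1})_g\cong(\pfrak A_g)^n/(\pfrak A_g)^{n+1}.
\]
For a point $\xi\in Z\cap D(g)$, corresponding to a prime $\mathfrak q\supseteq\pfrak$ with $g\notin\mathfrak q$, localizing once more gives
\[
  \gr_{I_{Z,\xi}}(\sheaf O_{X,\xi})\cong G_g\otimes_{D_g}D_{\mathfrak q}.
\]
This is a base change of a free module, hence free and in particular flat over $D_{\mathfrak q}=\sheaf O_{Z,\xi}$. If moreover $\xi\in\Reg(Z)$, then $Z$ is regular at $\xi$, so $X$ is normally flat along $Z$ at $\xi$ in the sense of \zcref{defi:normal-flatness}. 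Flatness of the whole graded module also gives freeness of each graded piece: each piece is a direct summand of a flat module, hence flat, and it is finitely generated over the local ring $\sheaf O_{Z,\xi}$, hence free. This matches the equivalent formulation in the definition.

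\emph{Main obstacle.} The only nontrivial inputs are generic freeness and the openness of the regular locus. The point needing care is that $G$ is not finite as a $D$-module, so one must use the version of generic freeness for finite-type algebras (or finite modules over them) over a Noetherian domain. Applying the finite-module version separately to each graded piece would produce infinitely many elements $g_n$, whose common non-vanishing locus need not be open. The algebra version avoids this by giving a single $g$ that works for all degrees at once.
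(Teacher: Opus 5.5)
Your proof is correct and follows essentially the same route as the paper: generic flatness of the associated graded algebra over $Z$, intersected with the regular locus, which is open and nonempty because $Z$ is J-2. The paper applies generic flatness globally to the normal cone $C_ZX\to Z$, whereas you reduce to an affine chart and apply generic freeness to the finite-type $D$-algebra $\gr_\pfrak(A)$; your localization check then makes explicit the step from a free $G_g$ to a flat $\gr_{I_{Z,\xi}}(\sheaf O_{X,\xi})$ that the paper's global phrasing leaves implicit.
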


\begin{proof}
Let $\mathcal I$ be the ideal of $Z$.
The normal cone
\[
 C_ZX=\Spec_Z\!\left(\bigoplus_{n\ge0}\mathcal I^n/\mathcal I^{n+1}\right)
\]
is of finite type over $Z$.
Generic flatness (\cite[\href{https://stacks.math.columbia.edu/tag/052A}{Tag~052A}]{stacks-project}) gives a dense open $U_0\subseteq Z$ over which $C_ZX\to Z$ is flat.
Since $Z$ is of finite type over a field, it is $J$-2 by
\cite[\href{https://stacks.math.columbia.edu/tag/07R5}{Tag~07R5}]{stacks-project}; hence $\Reg(Z)$ is open.
It contains the generic point of $Z$, so
$U=U_0\cap\Reg(Z)$ is nonempty.
For $x\in U$, the graded $\sheaf O_{Z,x}$-algebra
\[
 \bigoplus_{n\ge0}\mathcal I_x^n/\mathcal I_x^{n+1}
\]
is flat; hence $X$ is normally flat along $Z$ at $x$.
\end{proof}

\begin{definition}
\label{defi:perm-strat}
Let $X$ be of finite type over a field.
A \emph{regular permissible stratification} is a finite partition
\[
  X_{\red}=Y_1\sqcup\cdots\sqcup Y_N
\]
by regular irreducible locally closed subschemes $Y_i$ such that, for
\[
  Z_i=(\overline{Y_i})_{\red}\subseteq X_{\red},
\]
the scheme $X$ is normally flat along $Z_i$ at every point of $Y_i$.
The normal-flatness condition is imposed on the local rings of $X$, not on those of $X_{\red}$.

The stratification is \emph{smooth permissible} if each $Y_i$ is smooth over the ground field.
\end{definition}

\begin{theorem}
\label{thm:stratification}
Every finite-type scheme over a field admits a regular permissible stratification.
Over a perfect field, the stratification may be chosen smooth.
\end{theorem}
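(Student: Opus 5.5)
We prove \zcref{thm:stratification} by Noetherian induction on closed subsets of $X_{\red}$: we peel off one stratum at a time using \zcref{thm:generic-nf}. Write $T=X_{\red}$; all closed subsets are given their reduced structure. We build inductively a descending chain of closed subsets
\[
  T=T_0\supsetneq T_1\supsetneq\cdots,
\]
together with strata $Y_1,Y_2,\ldots$, where each $Y_j$ is a regular irreducible locally closed subscheme and $T_{j-1}\setminus T_j$ is a finite disjoint union of such strata. Since $X$ is Noetherian, the chain terminates at $\emptyset$, and the resulting family of strata is finite.

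For the inductive step, suppose the closed subset $T_{j}\neq\emptyset$ has been constructed, and let $Z$ be an irreducible component of $T_j$, taken with its reduced structure. Note that $Z$ is an integral closed subscheme of $X$, not merely of $T_j$. By \zcref{thm:generic-nf} there is a nonempty open $U\subseteq\Reg(Z)$ such that $X$ itself is normally flat along $Z$ at every point of $U$. Applying the theorem to $X$ rather than to $X_{\red}$ is exactly what \zcref{defi:perm-strat} requires.

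We then shrink $U$ in two ways.
\begin{enumerate}[label=\textup{(\roman*)}]
\item We remove $U\cap Z'$ for the other irreducible components $Z'$ of $T_j$. This is a closed subset of $U$ not containing the generic point of $Z$, so the remainder is still a nonempty open subset of $Z$. After this shrinking, $U$ is open in $T_j$.
\item If the ground field is perfect, we further intersect $U$ with the smooth locus of $Z$. Since $Z$ is reduced and of finite type over a perfect field, it is generically smooth, and its smooth locus is open. So $U$ stays nonempty, and regularity agrees with smoothness on it.
\end{enumerate}

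We set $Y=U$. It is irreducible and regular (smooth in the perfect case), and it is locally closed in $T$, being open in the closed set $T_j$. Its closure satisfies $(\overline{Y})_{\red}=Z$ because $U$ is dense in $Z$. The normal-flatness condition of $X$ along $Z$ holds at every point of $Y$ by construction. We then put $T_{j+1}=T_j\setminus U$, which is closed and strictly smaller than $T_j$.

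At the end, the strata $Y_i$ are pairwise disjoint, and they cover $T$ because each point leaves the chain at exactly one stage. This gives the regular permissible stratification, and it is smooth permissible over a perfect field.

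The key point, and the only nonformal ingredient, is \zcref{thm:generic-nf} applied to integral closed subschemes of the possibly nonreduced $X$. One must check that removing the other components does not destroy the normal-flatness property. It does not, because normal flatness at a point $\xi$ is a condition on the local rings $\sheaf O_{X,\xi}$ and $\sheaf O_{Z,\xi}$, and these are unchanged when we pass to an open subset of $Z$.
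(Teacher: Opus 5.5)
Your proposal is correct and follows essentially the same route as the paper: Noetherian induction, choosing an irreducible component $Z$, applying generic normal flatness of $X$ (not $X_{\red}$) along $Z$, removing the other components so that the stratum is open in the current closed set, and recursing on the complement. The only difference is in the perfect case, where you intersect with the smooth locus of $Z$; the paper instead observes that a regular finite-type scheme over a perfect field is already smooth, so that extra shrinking is redundant but harmless.
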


\begin{proof}
We argue by Noetherian induction on reduced closed subschemes $W\subseteq X_{\red}$.
The assertion is trivial for $W=\varnothing$.
Let $Z$ be an irreducible component of $W$, and write $Z_1,\ldots,Z_t$ for the other irreducible components.
The regular locus of $Z$ is a nonempty open subset by \cite[\href{https://stacks.math.columbia.edu/tag/07R5}{Tag~07R5}]{stacks-project}.
By \zcref{thm:generic-nf}, there is a nonempty open $U_0\subseteq\Reg(Z)$ along which $X$ is normally flat over $Z$.
Since
\[
 Z\not\subseteq Z_1\cup\cdots\cup Z_t,
\]
the open subset
\[
 U_1=U_0\setminus(Z_1\cup\cdots\cup Z_t)
\]
of $Z$ is nonempty.
Choose an open $O\subseteq W$ with $O\cap Z=U_1$ and set
\[
 U=O\setminus(Z_1\cup\cdots\cup Z_t).
\]
Every point of $W$ lies on an irreducible component; hence $U=U_1$.
Thus $U$ is nonempty, regular, irreducible, open in $W$, and $\overline{U}_{\red}=Z$.
It is permissible in $X$.

The reduced closed subscheme
\[
  W'=(W\setminus U)_{\red}
\]
is proper in $W$.
Apply the induction hypothesis to $W'$ and adjoin the stratum $U$.

Taking $W=X_{\red}$ proves the first assertion.
If the ground field is perfect, every regular finite-type scheme over it is smooth; see \cite[Tags~\href{https://stacks.math.columbia.edu/tag/038V}{038V} and \href{https://stacks.math.columbia.edu/tag/038X}{038X}]{stacks-project}.
\end{proof}

\begin{remark}
\label{rem:imperfect}
Over an arbitrary field, \zcref{thm:stratification} gives regular strata.
Smoothness is required for the finite-field sieves below and, over a possibly-imperfect infinite field, for \zcref{thm:infinite-field}.
\end{remark}

\begin{remark}[Simple normal crossings]
\label{rem:snc-strata}
Suppose that the completed local pair along an SNC stratum has the form
\[
 \widehat{\sheaf O}_{X,x}
 \cong
 \kappa(x)[[t_1,\ldots,t_r,u_1,\ldots,u_s]]/(t_1\cdots t_r),
\]
with stratum ideal $(t_1,\ldots,t_r)$.
Then
\begin{align*}
  \gr_{(t_1,\ldots,t_r)}(\widehat{\sheaf O}_{X,x})
  &\cong \kappa(x)[[u_1,\ldots,u_s]]
       [T_1,\ldots,T_r]/(T_1\cdots T_r).
\end{align*}
This ring is free over $\kappa(x)[[u_1,\ldots,u_s]]$.
Permissibility is invariant under completion by \cite[Ch.\,0, (2.2.2)]{bennett}; hence the usual SNC strata are permissible.
\end{remark}

\begin{remark}[Isolated singularities]
\label{ex:isolated}
Assume that $X$ is reduced and $\Sing(X)$ is a finite set of closed points.
For each irreducible component $Z$ of $X$, let
\[
 Y_Z=Z\cap\Reg(X).
\]
The nonempty $Y_Z$, together with the points of $\Sing(X)$, form a permissible stratification.
At a point of $Y_Z$, the local ideal of $Z$ is zero.
At a singular closed point, the stratum ideal is the maximal ideal, and its associated graded ring is flat over the residue field.
\end{remark}

\section{The bridge lemma}
\label{sec:bridge}

Let $\pfrak\subset(R,\mfrak,\kappa)$ be a permissible prime ideal.
The decomposition in \zcref{thm:Bennett} separates the tangent cone into the normal cone to $V(\pfrak)$ and the polynomial tangent directions along $V(\pfrak)$.
We record the following elementary observation.

\begin{lemma}
\label{lem:monic-linear-sequence}
Let $A$ be a $\kappa$-algebra, let $V$ be a finite-dimensional $\kappa$-vector space, and set
\[
  B=A\otimes_\kappa\Sym_\kappa(V).
\]
For $1\le j\le c$, let $h_j=a_j+\ell_j$, where $a_j\in A$ and $\ell_j\in V$.
If $\ell_1,\ldots,\ell_c$ are linearly independent, then $h_1,\ldots,h_c$ is a $B$-regular sequence.
If $\ell_1,\ldots,\ell_c,v_{c+1},\ldots,v_s$ is a basis of $V$, then
\[
  B/(h_1,\ldots,h_c)\cong A[v_{c+1},\ldots,v_s].
\]
\end{lemma}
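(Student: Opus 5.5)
The plan is to reduce to a change of variables in the polynomial factor. Extend $\ell_1,\ldots,\ell_c$ to a basis $\ell_1,\ldots,\ell_c,v_{c+1},\ldots,v_s$ of $V$. Then $\Sym_\kappa(V)=\kappa[\ell_1,\ldots,\ell_c,v_{c+1},\ldots,v_s]$ is a polynomial ring in these elements. Hence
\[
  B\cong A[v_{c+1},\ldots,v_s][\ell_1,\ldots,\ell_c],
\]
a polynomial ring in the $c$ variables $\ell_1,\ldots,\ell_c$ over $A':=A[v_{c+1},\ldots,v_s]$.

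Next I would apply the $A'$-algebra automorphism $\phi$ of $B$ defined by $\ell_j\mapsto \ell_j-a_j$ for $1\le j\le c$. It is an automorphism because its inverse is $\ell_j\mapsto\ell_j+a_j$; this makes sense since $a_j\in A\subseteq A'$. Under $\phi$ we have $\phi(h_j)=a_j+(\ell_j-a_j)=\ell_j$.

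Regularity and the quotient are invariant under ring automorphisms. So it suffices to check two facts:
\begin{enumerate}
\item the variables $\ell_1,\ldots,\ell_c$ form a regular sequence on the polynomial ring $A'[\ell_1,\ldots,\ell_c]$;
\item the quotient $A'[\ell_1,\ldots,\ell_c]/(\ell_1,\ldots,\ell_c)$ is isomorphic to $A'$.
\end{enumerate}

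For the first fact, I argue inductively. Modulo $(\ell_1,\ldots,\ell_{j-1})$ the ring is $A'[\ell_j,\ldots,\ell_c]$. Multiplication by the variable $\ell_j$ is injective there, since it just shifts coefficients. The final quotient is nonzero as long as $A\neq0$; if $A=0$, then $B=0$ and the regular-sequence condition should be read with the usual convention. The second fact is immediate.

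Combining these gives $B/(h_1,\ldots,h_c)\cong A[v_{c+1},\ldots,v_s]$, as claimed. The only delicate point is noting that $a_j$ need not be homogeneous of any particular degree. This is harmless, because the substitution is an ungraded ring automorphism and regularity is a non-graded notion. So I do not expect a real obstacle.
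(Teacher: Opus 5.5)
Your proof is correct and is essentially the paper's argument: both write $B=A[\ell_1,\ldots,\ell_c,v_{c+1},\ldots,v_s]$ in the extended basis and eliminate each $\ell_j$ through the substitution $\ell_j=-a_j$. The only difference is packaging. You perform all substitutions at once as the $A[v_{c+1},\ldots,v_s]$-algebra automorphism $\ell_j\mapsto\ell_j-a_j$, which reduces to the variables being a regular sequence. The paper instead quotients successively, using that $\ell_j+a_j$ is monic in $\ell_j$ and hence a nonzerodivisor.
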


\begin{proof}
Use the given basis.
Then
\[
  B=A[\ell_1,\ldots,\ell_c,v_{c+1},\ldots,v_s].
\]
Successively substitute $\ell_j=-a_j$.
At the $j$th step, $\ell_j+a_j$ is monic in $\ell_j$, hence a nonzerodivisor.
The final quotient is the polynomial ring on the right-hand side.
\end{proof}

\begin{lemma}
\label{lem:bennett-degree-one}
With the setup of \zcref{thm:Bennett}, assume that $\pfrak\subseteq R$ is a permissible prime, and set $G=\gr_\mfrak(R)$.
In degree one the quasi-canonical isomorphism is
\[
  G_1\cong A_1\oplus B_1.
\]
Under this identification, the map
\[
  \mfrak/\mfrak^2\longrightarrow
  (\mfrak/\pfrak)/(\mfrak/\pfrak)^2
\]
is the projection $A_1\oplus B_1\to B_1$.
\end{lemma}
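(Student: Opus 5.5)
We first make the quasi-canonical map of \zcref{thm:Bennett} explicit in degree one, and then check what it does to first-order data. Recall that $A=\gr_\pfrak(R)/\mfrak\gr_\pfrak(R)$, so that
\[
  A_1=\pfrak/(\pfrak^2+\mfrak\pfrak)=\pfrak/\mfrak\pfrak,
  \qquad
  B_1=\mfrak/(\pfrak+\mfrak^2).
\]
The quasi-canonical map sends $A_1\to G_1$ via $\pfrak\subseteq\mfrak$, that is, $x+\mfrak\pfrak\mapsto x+\mfrak^2$. It sends $B_1\to G_1$ via a chosen $\kappa$-linear section. Because $R/\pfrak$ is regular, $B$ is the polynomial ring on $B_1$, and one picks lifts $u_j\in\mfrak$ of a basis of $B_1$ and maps $\bar u_j\mapsto u_j+\mfrak^2$. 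Since $\pfrak$ is permissible, \zcref{thm:Bennett} tells us the map $A\otimes_\kappa B\to G$ is an isomorphism, so in particular it is an isomorphism in degree one. The degree-one part of $A\otimes_\kappa B$ is $A_1\otimes B_0\oplus A_0\otimes B_1=A_1\oplus B_1$, because $A_0=B_0=\kappa$. This gives the identification $G_1\cong A_1\oplus B_1$.

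It remains to identify the projection. The natural map $\mfrak/\mfrak^2\to(\mfrak/\pfrak)/(\mfrak/\pfrak)^2=\mfrak/(\pfrak+\mfrak^2)=B_1$ is the quotient by the image of $\pfrak$ in $\mfrak/\mfrak^2$. We check it on each summand separately. On the image of $A_1$, every element is represented by some $x\in\pfrak$, so it maps to $0$. On the image of $B_1$, the element $u_j+\mfrak^2$ maps to the class of $u_j$, which is $\bar u_j$ by construction. So the composite $A_1\oplus B_1\cong G_1\to B_1$ is $(a,b)\mapsto b$, which is the projection.

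The one point needing care is that the projection should not depend on the chosen section. It does not: the decomposition $G_1\cong A_1\oplus B_1$ may depend on the lifts, but the $A_1$-summand is canonically the image of $\pfrak$ in $\mfrak/\mfrak^2$. Its complement is mapped isomorphically onto the quotient regardless of which section is used. We also need the isomorphism to restrict to an injection on $A_1$, so that the image of $\pfrak$ is exactly the $A_1$-summand; this follows from the degree-one isomorphism. I expect the main obstacle to be pinning down Bennett's ``quasi-canonical'' convention precisely enough to confirm that the degree-one components are exactly these inclusion and section maps. Once that is settled, the rest is bookkeeping.
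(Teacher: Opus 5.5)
Your proposal is correct and follows the paper's route: the identification $G_1\cong A_1\oplus B_1$ comes from $(A\otimes_\kappa B)_1=A_1\oplus B_1$ using $A_0=B_0=\kappa$, and the projection claim comes from how the quasi-canonical map is built. You go a step further than the paper, which simply cites Bennett for the second part. You write out the degree-one components explicitly: the inclusion-induced map $\pfrak/\mfrak\pfrak\to\mfrak/\mfrak^2$ and the chosen lifts $u_j$. You also observe that the kernel of $\mfrak/\mfrak^2\to\mfrak/(\pfrak+\mfrak^2)$ is exactly the image of $A_1$, which makes the projection statement independent of the chosen lifts.
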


\begin{proof}
Since $A_0=B_0=\kappa$,
\[
  (A\otimes_\kappa B)_1
  =(A_1\otimes_\kappa B_0)\oplus(A_0\otimes_\kappa B_1)
  =A_1\oplus B_1.
\]
The assertion about the quotient map is part of the construction of the quasi-canonical homomorphism; see \cite[Ch.\,0, (2.1.1)]{bennett}.
\end{proof}

\begin{theorem}[Bridge lemma]
\label{thm:bridge-ci}
Let $(R,\mathfrak m,\kappa)$ be a Noetherian local ring and let
$\mathfrak p\subseteq R$ be a permissible prime.
Let
$f_1,\ldots,f_c\in\mathfrak m$.
If their images in
\[
 \frac{\mathfrak m/\mathfrak p}{(\mathfrak m/\mathfrak p)^2}
\]
are linearly independent over $\kappa$, then
$f_1^*,\ldots,f_c^*$ is a regular sequence on
$\gr_\mathfrak m(R)$.
Consequently $f_1,\ldots,f_c$ is an $R$-regular sequence,
\begin{equation}
\label{eq:bridge-graded}
  \gr_\mfrak(R)/(f_1^*,\ldots,f_c^*)
  \xrightarrow{\ \sim\ }
  \gr_{\mfrak/(f_1,\ldots,f_c)}
  \bigl(R/(f_1,\ldots,f_c)\bigr),
\end{equation}
and
\begin{equation}
\label{eq:bridge-mult}
  e\bigl(R/(f_1,\ldots,f_c)\bigr)=e(R).
\end{equation}
\end{theorem}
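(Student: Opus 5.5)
The plan is to reduce everything to \zcref{lem:initial-regular-sequence}, so the real content is showing that $f_1^*,\ldots,f_c^*$ is a regular sequence of degree-one forms in $G=\gr_\mfrak(R)$. The consequences then follow directly: the regular-sequence claim is part (a), the isomorphism \eqref{eq:bridge-graded} is part (c), and \eqref{eq:bridge-mult} is part (d).

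First I will check that each $f_j$ has $\mfrak$-adic order exactly one. The images of the $f_j$ in $(\mfrak/\pfrak)/(\mfrak/\pfrak)^2$ are linearly independent, hence nonzero. By \zcref{lem:bennett-degree-one}, this quotient is the image of $\mfrak/\mfrak^2$ under the projection $G_1=A_1\oplus B_1\to B_1$, so the class of $f_j$ in $\mfrak/\mfrak^2$ is nonzero. Therefore $\ord_\mfrak(f_j)=1$, and $f_j^*$ is the class of $f_j$ in $G_1$.

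Next I will use \zcref{thm:Bennett}: permissibility gives the graded isomorphism $A\otimes_\kappa B\cong G$, where $B\cong\kappa[u_1,\ldots,u_s]=\Sym_\kappa(B_1)$ is a polynomial ring on its degree-one part. Under $G_1\cong A_1\oplus B_1$, write $f_j^*=a_j+\ell_j$ with $a_j\in A_1$ and $\ell_j\in B_1$. By \zcref{lem:bennett-degree-one}, $\ell_j$ is exactly the image of $f_j$ in $(\mfrak/\pfrak)/(\mfrak/\pfrak)^2$, so $\ell_1,\ldots,\ell_c$ are linearly independent in $V=B_1$.

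Applying \zcref{lem:monic-linear-sequence} with this $A$ and $V=B_1$ then shows that $f_1^*,\ldots,f_c^*$ is a regular sequence on $A\otimes_\kappa\Sym(V)\cong G$. Since the isomorphism is a ring isomorphism, regularity transfers to $G$. Now every hypothesis of \zcref{lem:initial-regular-sequence} holds, and (a), (c), (d) give the three conclusions.

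The main obstacle is the identification step. I need the quasi-canonical isomorphism to be an isomorphism of rings, not only of graded vector spaces, and I need its degree-one component to be compatible with the projection to $(\mfrak/\pfrak)/(\mfrak/\pfrak)^2$. Both are provided by \zcref{thm:Bennett} and \zcref{lem:bennett-degree-one}, and I will cite Bennett's construction for the compatibility. A minor further point is that \zcref{lem:monic-linear-sequence} needs no hypothesis on $A$, so it applies even though $A$ may be non-reduced.
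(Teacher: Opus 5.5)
Your proposal is correct and follows the paper's own proof. Both arguments decompose $f_j^*=a_j+\ell_j$ via \zcref{thm:Bennett} and \zcref{lem:bennett-degree-one}, use \zcref{lem:monic-linear-sequence} to get the regular sequence of initial forms, and then read off the three consequences from \zcref{lem:initial-regular-sequence}. (Your order-one check can be shortened to the paper's one-line observation: if $f_j\in\mfrak^2$, then its image lies in $(\mfrak/\pfrak)^2$.)
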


\begin{proof}
By \zcref{thm:Bennett},
\[
  \gr_\mfrak(R)\cong A\otimes_\kappa B,
  \qquad
  B\cong\kappa[u_1,\ldots,u_s],
  \qquad
  \text{and}
  \qquad
  B_1\cong\nfrak/\nfrak^2.
\]
The hypothesis implies $f_j\notin\mfrak^2$.
By \zcref{lem:bennett-degree-one},
\[
  f_j^*=a_j+\ell_j\in A_1\oplus B_1,
\]
where $\ell_1,\ldots,\ell_c$ are linearly independent.
Apply \zcref{lem:monic-linear-sequence}; then apply \zcref{lem:initial-regular-sequence} to obtain \eqref{eq:bridge-graded} and \eqref{eq:bridge-mult}.
\end{proof}

\begin{remark}
\zcref{thm:bridge-ci} does not assume reducedness, equidimensionality, or excellence.
\end{remark}

\begin{corollary}
\label{thm:bridge}
Let $(R,\mfrak,\kappa)$ be a Noetherian local ring, let $\pfrak\subseteq R$ be permissible prime, and let $f\in\mfrak$.
If the image $\bar f\in R/\pfrak$ satisfies
\[
  \bar f\in(\mfrak/\pfrak)\setminus(\mfrak/\pfrak)^2,
\]
then $f\in\mfrak\setminus\mfrak^2$, the elements $f^*$ and $f$ are nonzerodivisors on $\gr_\mfrak(R)$ and $R$, respectively, and
\[
  e(R/(f))=e(R).
\]
\end{corollary}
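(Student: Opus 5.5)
This is the case $c=1$ of \zcref{thm:bridge-ci}, so the plan is to specialize that theorem and add one small observation to get $f\notin\mfrak^2$. Set $f_1=f$.

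First, the hypothesis $\bar f\in(\mfrak/\pfrak)\setminus(\mfrak/\pfrak)^2$ says exactly that the class of $\bar f$ in $(\mfrak/\pfrak)/(\mfrak/\pfrak)^2$ is nonzero. A single nonzero vector is linearly independent over $\kappa$, so the hypothesis of \zcref{thm:bridge-ci} holds with $c=1$.

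Second, we check that $f\notin\mfrak^2$. The reduction map $R\to R/\pfrak$ sends $\mfrak^2$ into $(\mfrak/\pfrak)^2$. So if $f$ were in $\mfrak^2$, then $\bar f$ would lie in $(\mfrak/\pfrak)^2$, contradicting the hypothesis. Equivalently, by \zcref{lem:bennett-degree-one}, the class of $f$ in $\mfrak/\mfrak^2=A_1\oplus B_1$ projects to a nonzero element of $B_1$, so that class is nonzero. Hence $\ord_\mfrak(f)=1$ and $f^*$ is the degree-one class of $f$.

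Third, \zcref{thm:bridge-ci} with $c=1$ gives the remaining conclusions directly. The one-term sequence $f^*$ is regular on $\gr_\mfrak(R)$, which means $f^*$ is a nonzerodivisor there. The one-term sequence $f$ is $R$-regular, so $f$ is a nonzerodivisor on $R$. Finally, \eqref{eq:bridge-mult} gives $e(R/(f))=e(R)$.

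There is no real obstacle here. The only point needing care is that "regular sequence" for a single element includes the requirement that the quotient be nonzero. This holds because $f\in\mfrak$ and $f^*$ has positive degree, and in any case only the nonzerodivisor property is claimed.
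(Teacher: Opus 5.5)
Your proposal is correct and is exactly the paper's argument: the paper proves this corollary in one line by applying \zcref{thm:bridge-ci} with $c=1$. Your extra check that $f\notin\mfrak^2$ (because $R\to R/\pfrak$ sends $\mfrak^2$ into $(\mfrak/\pfrak)^2$) is the same observation the paper makes inside the proof of \zcref{thm:bridge-ci}.
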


\begin{proof}
Apply \zcref{thm:bridge-ci} with $c=1$.
\end{proof}

\begin{corollary}
\label{cor:transverse}
Let $X$ be of finite type over a field, let $X_{\red}=\bigsqcup_iY_i$ be a regular permissible stratification, and let $\xi\in Y_i$.
If $f_1,\ldots,f_c\in\mfrak_{X,\xi}$ have linearly independent images in
\[
 \mfrak_{Y_i,\xi}/\mfrak_{Y_i,\xi}^2,
\]
then $f_1,\ldots,f_c$ is $\sheaf O_{X,\xi}$-regular and
\[
 e\bigl(\sheaf O_{X,\xi}/(f_1,\ldots,f_c)\bigr)
 =e(\sheaf O_{X,\xi}).
\]
For $c=1$, one also has $\ord_\xi(f_1)=1$.
\end{corollary}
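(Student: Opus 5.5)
The plan is to reduce \zcref{cor:transverse} to \zcref{thm:bridge-ci} by taking $R=\sheaf O_{X,\xi}$ and letting $\pfrak$ be the ideal of the stratum closure. Set $Z_i=(\overline{Y_i})_{\red}\subseteq X_{\red}\subseteq X$, a closed integral subscheme of $X$, and let $\pfrak=I_{Z_i,\xi}\subseteq R$ be its defining ideal at $\xi$. Since $Z_i$ is integral, $\pfrak$ is prime. Since $Y_i$ is open in $Z_i$ (it is locally closed with closure $Z_i$) and $\xi\in Y_i$, we have $R/\pfrak=\sheaf O_{Z_i,\xi}=\sheaf O_{Y_i,\xi}$. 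This ring is regular because $Y_i$ is regular.

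By \zcref{defi:perm-strat}, $X$ is normally flat along $Z_i$ at $\xi$. By \zcref{defi:normal-flatness}, this means $\gr_\pfrak(R)$ is flat over $R/\pfrak$. Hence $\pfrak$ is a permissible prime in the sense of \zcref{defi:permissible-prime}. One point needs care here: the normal-flatness condition refers to the local ring of $X$ itself, not of $X_{\red}$. The definition of the stratification builds this in, so $\pfrak$ is taken inside $\sheaf O_{X,\xi}$ and not inside $\sheaf O_{X_{\red},\xi}$.

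Next, identify $\mfrak/\pfrak$ with $\mfrak_{Y_i,\xi}$. The images of $f_1,\ldots,f_c$ in $(\mfrak/\pfrak)/(\mfrak/\pfrak)^2=\mfrak_{Y_i,\xi}/\mfrak_{Y_i,\xi}^2$ are exactly the classes in the hypothesis, so they are linearly independent over $\kappa(\xi)$. Now \zcref{thm:bridge-ci} applies directly. It gives that $f_1,\ldots,f_c$ is $R$-regular and that $e(R/(f_1,\ldots,f_c))=e(R)=e_\xi(X)$.

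For $c=1$, the image of $f_1$ is nonzero in $\mfrak_{Y_i,\xi}/\mfrak_{Y_i,\xi}^2$, so $\bar f_1\in(\mfrak/\pfrak)\setminus(\mfrak/\pfrak)^2$. \zcref{thm:bridge} then gives $f_1\in\mfrak\setminus\mfrak^2$, i.e.\ $\ord_\xi(f_1)=1$. Nothing here looks like a real obstacle. The only step needing attention is the identification of the local rings, namely that $\sheaf O_{Z_i,\xi}=\sheaf O_{Y_i,\xi}$ because $Y_i$ is open in $Z_i$, together with the fact that the reduced structure on $Y_i$ agrees with the reduced induced structure on $Z_i$ near $\xi$.
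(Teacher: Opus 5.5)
Your proof is correct and follows the paper's argument. You identify the local ideal of $Z_i=(\overline{Y_i})_{\red}$ in $\sheaf O_{X,\xi}$ as a permissible prime with quotient $\sheaf O_{Y_i,\xi}$, then apply \zcref{thm:bridge-ci}, obtaining the $c=1$ order statement from $f_1\notin\mfrak^2$; you simply spell out the identification $\sheaf O_{Z_i,\xi}=\sheaf O_{Y_i,\xi}$ and the permissibility check in more detail than the paper does.
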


\begin{proof}
Let $Z_i=(\overline{Y_i})_{\red}$.
Its local ideal $\pfrak_{i,\xi}\subseteq\sheaf O_{X,\xi}$ is permissible and
\[
 \sheaf O_{X,\xi}/\pfrak_{i,\xi}\cong\sheaf O_{Y_i,\xi}.
\]
Now apply \zcref{thm:bridge-ci}.
\end{proof}

\begin{corollary}
\label{cor:transverse-sections-hilbert-samuel}
In the setting of \zcref{cor:transverse}, set
\[
 X'=V(f_1,\ldots,f_c)\subseteq\Spec\sheaf O_{X,\xi}.
\]
Then
\[
 \gr_{\mfrak_{X',\xi}}\sheaf O_{X',\xi}
 \cong
 \gr_{\mfrak_{X,\xi}}\sheaf O_{X,\xi}/(f_1^*,\ldots,f_c^*)
\].
Writing
\[
 \chi_{X,\xi}(n)=\ell\!\left(\sheaf O_{X,\xi}/\mfrak_{X,\xi}^{n+1}\right),
 \qquad
 \chi_{X',\xi}(n)=\ell\!\left(\sheaf O_{X',\xi}/\mfrak_{X',\xi}^{n+1}\right),
\]
one has
\[
 \chi_{X',\xi}(n)=\Delta^c\chi_{X,\xi}(n)
 \qquad n\ge0.
\]
In particular,
\[
 e_j(\sheaf O_{X',\xi})=e_j(\sheaf O_{X,\xi})
 \qquad
 0\le j\le\dim\sheaf O_{X,\xi}-c.
\]
\end{corollary}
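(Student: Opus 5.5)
The plan is to reduce everything to the bridge lemma in the form of \zcref{cor:transverse}, whose proof already passes through \zcref{thm:bridge-ci}. That route hands us the graded isomorphism directly. Write $R=\sheaf O_{X,\xi}$, $\mfrak=\mfrak_{X,\xi}$, $J=(f_1,\ldots,f_c)$, and let $\pfrak=\pfrak_{i,\xi}$ be the local ideal of $Z_i=(\overline{Y_i})_{\red}$.

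First, by \zcref{defi:perm-strat}, $\pfrak$ is a permissible prime of $R$ with $R/\pfrak\cong\sheaf O_{Y_i,\xi}$. Therefore the hypothesis of \zcref{cor:transverse} is exactly the hypothesis of \zcref{thm:bridge-ci}. That theorem shows that $f_1^*,\ldots,f_c^*$ is a $\gr_\mfrak(R)$-regular sequence, and that each $f_j\notin\mfrak^2$, since a nonzero image in $(\mfrak/\pfrak)/(\mfrak/\pfrak)^2$ forces $f_j\notin\mfrak^2$. Hence every $f_j$ has $\mfrak$-adic order one.

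Next, since $X'=\Spec(R/J)$ and $\xi$ corresponds to the maximal ideal $\mfrak/J$, we have $\sheaf O_{X',\xi}=R/J$ and $\mfrak_{X',\xi}=\mfrak/J$. The first displayed isomorphism is then precisely \eqref{eq:bridge-graded}, equivalently \zcref{lem:initial-regular-sequence}(c).

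The remaining assertions are the content of \zcref{cor:hilbert-samuel-finite-differences}, applied to $R$ and $J$; its hypotheses are exactly those of \zcref{lem:initial-regular-sequence}, which we have just verified. It gives $\chi_{R/J}(n)=\Delta^c\chi_R(n)$ for all $n\ge0$, with the same normalisation $\chi(n)=\ell(R/\mfrak^{n+1})$ and $\chi(n)=0$ for $n<0$. It also gives $e_j(R/J)=e_j(R)$ for $0\le j\le\dim R-c$. Translating notation yields the corollary.

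The only step needing care is checking that the length and index conventions match. Lengths over $R/J$ agree with lengths over $R$ for $R/J$-modules, so $\chi_{X',\xi}$ computed over $\sheaf O_{X',\xi}$ equals the function in \zcref{cor:hilbert-samuel-finite-differences}. For the Hilbert coefficient statement we also need $\dim R/J=\dim R-c$. This holds because $f_1,\ldots,f_c$ is an $R$-regular sequence in $\mfrak$, by \zcref{lem:initial-regular-sequence}(a), and the polynomial expansion in the proof of \zcref{cor:hilbert-samuel-finite-differences} then matches degrees. I expect no genuine obstacle beyond this bookkeeping.
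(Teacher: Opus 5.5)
Your proposal is correct and follows the paper's proof. You obtain from \zcref{thm:bridge-ci}, via the permissible stratum prime as in \zcref{cor:transverse}, that $f_1^*,\ldots,f_c^*$ is a regular sequence of degree-one forms on $\gr_{\mfrak_{X,\xi}}\sheaf O_{X,\xi}$, and then read off the graded isomorphism, the finite-difference formula and the equality of Hilbert coefficients from \zcref{lem:initial-regular-sequence} and \zcref{cor:hilbert-samuel-finite-differences}; your extra bookkeeping ($\sheaf O_{X',\xi}=R/J$ and $\dim R/J=\dim R-c$) only makes explicit what the paper leaves implicit.
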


\begin{proof}
By \zcref{thm:bridge-ci}, the initial forms $f_1^*,\ldots,f_c^*$ form a regular sequence on
$\gr_{\mfrak_{X,\xi}}\sheaf O_{X,\xi}$.
Apply \zcref{lem:initial-regular-sequence,cor:hilbert-samuel-finite-differences}.
\end{proof}

\section{Proof of the main theorem}
\label{sec:main-proof}

Now we prove the main result of this paper: a Bertini theorem for Hilbert--Samuel multiplicity over finite fields.

\begin{proof}[Proof of \zcref{thm:intro-main}]
For a closed point $\xi\in Y_i$, put $Q_\xi=q^{\deg \xi}$.
The intersection $Y_i\cap H_f$ is smooth of codimension one at $\xi$ if and only if the image of $f$ in
\[
 \sheaf O_{Y_i,\xi}/\mfrak_{Y_i,\xi}^2
\]
is nonzero.
Since $Y_i$ is smooth of dimension $s_i$,
\[
  \dim_{\kappa(\xi)}\sheaf O_{Y_i,\xi}/\mfrak_{Y_i,\xi}^2=s_i+1;
\]
hence the local good measure is
\[
  1-Q_\xi^{-(s_i+1)}.
\]
Apply \cite[Theorem~1.3 and the subsequent remark]{poonen:bertini} to these local conditions and to the finite family $Y_1,\ldots,Y_N$.
This gives
\[
  \mu(\cP)
  =\prod_{i=1}^N\prod_{\xi\in|Y_i|}
    \bigl(1-q^{-(s_i+1)\deg \xi}\bigr)
  =\prod_{i=1}^N\zeta_{Y_i}(s_i+1)^{-1}.
\]
Every local factor is nonzero, so the product is positive by the remark following \cite[Theorem~1.3]{poonen:bertini}.

Let $f\in\cP$.
At every closed point of $Y_i\cap H_f$, \zcref{lem:smooth-first-jets} gives a nonzero first jet.
The remaining assertions at every point of $X\cap H_f$ follow from \zcref{prop:closed-to-all-multiplicity} with $c=1$.
\end{proof}

\section{Examples and first applications}
\label{sec:examples}

We collect a few concrete applications of \zcref{thm:intro-main}.

\subsection{Reduced curves with finitely many singular points}

Let $C\subseteq\bP^n_{\Fq}$ be a reduced curve with singular closed points $\xi_1,\ldots,\xi_t$.
The regular loci of the irreducible components, together with the points $\xi_j$, form a permissible stratification.
Since their union is $\Reg(C)$, \zcref{thm:intro-main} gives the density
\[
  \zeta_{\Reg(C)}(2)^{-1}
  \prod_{j=1}^t(1-q^{-\deg \xi_j}).
\]
For every hypersurface in this set, multiplicity is preserved at every point of $C\cap H_f$.

\subsection{Isolated singularities}

Let $X$ be reduced and pure of dimension $m$, and assume that $\Sing(X)$ is finite.
The stratification in \zcref{ex:isolated} gives
\[
  \zeta_{\Reg(X)}(m+1)^{-1}
  \prod_{\xi\in\Sing(X)}(1-q^{-\deg \xi}).
\]

\subsection{Simple normal crossings}

Let $X$ be a split simple normal crossings scheme over $\Fq$, with its standard strata.
A stratum $Y$ of dimension $s$ contributes
\[
  \zeta_Y(s+1)^{-1}.
\]
Thus the density is the product of these factors over the SNC strata.

\subsection{Zero-dimensional strata}

If $Y=\{\xi\}$ is a zero-dimensional stratum, the required intersection has dimension $-1$; equivalently, $\xi\notin H_f$.
Its local factor is
\[
  1-q^{-\deg \xi}.
\]

\section{Complete intersections}
\label{sec:bk}

For a prime power $q$ and integers $s,c\ge0$, set
\[
  L(q,s,c)=
  \begin{cases}
    \displaystyle\prod_{j=0}^{c-1}(1-q^{j-s}), & c\le s,\\[6pt]
    0, & c>s.
  \end{cases}
\]
If $c\le s$, this is the proportion of linearly independent $c$-tuples in an $s$-dimensional vector space over $\mathbb F_q$.

\begin{theorem}
\label{thm:bk-mult}
Let $X\subseteq\bP^n_{\Fq}$ be quasiprojective, and let
\[
 X_{\red}=Y_1\sqcup\cdots\sqcup Y_N
\]
be a smooth permissible stratification.
Put $s_i=\dim Y_i$, fix $c\ge1$, and set
\[
 k_i=\min\{c,s_i+1\}.
\]
For $d_1\le\cdots\le d_c$, let $\cP_{\mathbf d}$ be the set of tuples
\[
 \mathbf f=(f_1,\ldots,f_c)\in S_{d_1}\times\cdots\times S_{d_c}
\]
such that, for every $i$,
\[
 Y_i\cap H_{f_1}\cap\cdots\cap H_{f_c}
\]
is smooth of dimension $s_i-c$ if $c\le s_i$, and is empty if $c>s_i$.
Assume that $d_1\to\infty$ and, for every $i$ with $s_i>0$,
\begin{equation}
\label{eq:bk-growth}
 d_{k_i}^{s_i}q^{-d_1/\max\{s_i+1,p\}}\longrightarrow 0,
 \qquad p=\operatorname{char}\Fq.
\end{equation}
Then
\begin{equation}
\label{eq:bk-density}
 \frac{\#\cP_{\mathbf d}}
 {\#(S_{d_1}\times\cdots\times S_{d_c})}
 \longrightarrow
 \prod_{i=1}^N\prod_{P\in|Y_i|}\alpha_{i,P},
\end{equation}
where, for $Q_P=q^{\deg P}$,
\[
 \alpha_{i,P}
 =1-Q_P^{-c}+Q_P^{-c}L(Q_P,s_i,c).
\]
The product is positive.
If $\mathbf f\in\cP_{\mathbf d}$ and
\[
 \xi\in X\cap H_{f_1}\cap\cdots\cap H_{f_c},
\]
then the local equations form an $\sheaf O_{X,\xi}$-regular sequence and
\[
 e_\xi(X\cap H_{f_1}\cap\cdots\cap H_{f_c})=e_\xi(X).
\]
\end{theorem}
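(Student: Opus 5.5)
The proof has two independent parts: a density computation, obtained by feeding local conditions into the Bucur--Kedlaya complete-intersection sieve, and a local algebraic statement, which follows from the bridge lemma in the form of \zcref{cor:transverse}.

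\emph{Local conditions.} For a closed point $P\in Y_i$, the condition that $Y_i\cap H_{f_1}\cap\cdots\cap H_{f_c}$ be smooth of the expected dimension at $P$ (or avoid $P$ when $c>s_i$) depends only on the images $\bar f_j$ of the $f_j$ in $\sheaf O_{Y_i,P}/\mfrak_{Y_i,P}^2$. This is a $\kappa(P)$-space of dimension $s_i+1$, identified with $\kappa(P)\oplus\mfrak_{Y_i,P}/\mfrak_{Y_i,P}^2$. By \zcref{lem:smooth-first-jets}, the condition is: either some $\bar f_j$ has nonzero constant term, so $P\notin\bigcap_j H_{f_j}$; or all constant terms vanish and the linear parts are linearly independent in the $s_i$-dimensional cotangent space. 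Independence is impossible if $c>s_i$.

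\emph{Density.} The first alternative fails with probability $Q_P^{-c}$. Given that it fails, independence holds with probability $L(Q_P,s_i,c)$. So the local factor is $\alpha_{i,P}=1-Q_P^{-c}+Q_P^{-c}L(Q_P,s_i,c)$. These conditions are exactly the ones in \cite[Theorem~1.2]{bucur-kedlaya}, applied to each smooth quasiprojective $Y_i$. The finitely many strata are handled jointly, as in the proof of \zcref{thm:intro-main}: the small- and medium-degree sieves run over the disjoint union of the $|Y_i|$, and the high-degree error terms are bounded stratum by stratum. The growth hypothesis \eqref{eq:bk-growth}, indexed by $k_i$, is what makes the high-degree estimate work on $Y_i$. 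When $c>s_i$, only $s_i+1$ of the forms are needed to force emptiness near a point, which is why $d_{k_i}$ rather than $d_c$ appears.

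\emph{Positivity.} Each $\alpha_{i,P}\geq 1-Q_P^{-c}>0$ when $c\le s_i$. When $c>s_i$, $\alpha_{i,P}=1-Q_P^{-c}$ with $c\ge s_i+1$. In both cases $1-\alpha_{i,P}=O(Q_P^{-(s_i+1)})$, and $\sum_{P\in|Y_i|}Q_P^{-(s_i+1)}$ converges because $\zeta_{Y_i}(s_i+1)$ converges. So the infinite product is positive.

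\emph{Local conclusion.} Take $\mathbf f\in\cP_{\mathbf d}$ and a closed point $\xi\in X\cap\bigcap_j H_{f_j}$, with $\xi\in Y_i$. Then $c\le s_i$, and by \zcref{lem:smooth-first-jets} the local equations $f_1,\ldots,f_c$ have linearly independent images in $\mfrak_{Y_i,\xi}/\mfrak_{Y_i,\xi}^2$. Now \zcref{cor:transverse} gives that they form an $\sheaf O_{X,\xi}$-regular sequence and that $e_\xi$ is preserved.

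\emph{Main obstacle: non-closed points.} For a non-closed point $\xi$ of the intersection, I would invoke \zcref{prop:closed-to-all-multiplicity}, exactly as the main proof does. The idea behind it is that $\xi$ specializes to a closed point $\xi_0$, and the stratum through $\xi$ has its closure passing through $\xi_0$. Transversality at $\xi$ to its own stratum should follow from smoothness of $Y_i\cap\bigcap_j H_{f_j}$ of the right dimension, since that is an open condition holding at all points, not only closed ones. I would check that this smoothness statement, applied to the stratum containing $\xi$, gives linear independence in $\mfrak_{Y_i,\xi}/\mfrak_{Y_i,\xi}^2$ via \zcref{lem:smooth-first-jets} at $\xi$ itself. \zcref{cor:transverse} then applies there directly, and no specialization argument is needed. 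Verifying this point, together with the bookkeeping needed to apply the Bucur--Kedlaya sieve simultaneously to several strata, is where I expect the real work to lie.
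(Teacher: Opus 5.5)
Your proposal is correct and follows the paper's proof essentially step for step: the same local factor $\alpha_{i,P}$ from first jets along $Y_i$, the Bucur--Kedlaya sieve run jointly over the strata with only the first $k_i=\min\{c,s_i+1\}$ equations used in the tail estimate, the bound $1-\alpha_{i,P}=O(Q_P^{-(s_i+1)})$ for positivity, and \zcref{cor:transverse} for the local conclusion. Your direct treatment of non-closed points is also valid, and it replaces the paper's appeal to \zcref{prop:closed-to-all-multiplicity}: membership in $\cP_{\mathbf d}$ asserts smoothness of pure dimension $s_i-c$ of the whole scheme, so \zcref{lem:smooth-first-jets} applies at $\xi$ itself (the codimension in $\sheaf O_{Y_i,\xi}$ is $c$ because $Y_i$ is irreducible of dimension $s_i$), and the closed-points-suffice argument is then needed only to identify the sieve's closed-point condition with $\cP_{\mathbf d}$.
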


\begin{proof}
Fix $P\in Y_i$ and put $Q=q^{\deg P}$.
Trivialize each $\sheaf O(d_j)$ on the first infinitesimal neighborhood of $P$ in $Y_i$.
If the constant terms of $f_1,\ldots,f_c$ are not all zero, then $P$ is not in the common zero scheme.
All constant terms vanish with probability $Q^{-c}$; conditional on this event, the differentials are uniformly distributed in
\[
 \bigl(\mfrak_{Y_i,P}/\mfrak_{Y_i,P}^2\bigr)^c.
\]
Their rank is $c$ with probability $L(Q,s_i,c)$.
Thus the local good factor is
\[
 \alpha_{i,P}=1-Q^{-c}+Q^{-c}L(Q,s_i,c).
\]

Fix $r>0$.
Let $W_r$ be the union, in $\bP^n_{\Fq}$, of the first infinitesimal neighborhoods in the strata of all closed points of degree $<r$.
Each component is an immersion with closed support, hence a closed immersion by
\cite[\href{https://stacks.math.columbia.edu/tag/01IQ}{Tag~01IQ}]{stacks-project}; the supports are distinct, so $W_r$ is finite.
For $d_1\gg0$, with the bound depending on $r$, all maps
\[
 S_{d_j}\longrightarrow H^0(W_r,\sheaf O_{W_r}(d_j))
\]
are surjective by \cite[Lemma~2.1]{bucur-kedlaya}.
The conditions at points of degree $<r$ are independent and have density
\[
 \prod_{i=1}^N
 \prod_{\substack{P\in|Y_i|\\ \deg P<r}}
 \alpha_{i,P}.
\]

Suppose $s_i>0$.
If $c\le s_i$, apply
\cite[Lemma~2.5 and Corollary~2.7]{bucur-kedlaya} to all $c=k_i$ equations.
If $c>s_i$, a common zero of all $c$ equations is a common zero of the first
$k_i=s_i+1$ equations, and every common zero of those $k_i$ equations is bad because $k_i>s_i$.
The same estimates give
\begin{equation}
\label{eq:bk-tail}
\begin{aligned}
 &\Pr\bigl(\text{a bad point of }Y_i\text{ has degree at least }r\bigr)\\
 &\qquad\le C_iq^{-r(2k_i-1)}
 +C_i'd_{k_i}^{s_i}q^{-d_1/\max\{s_i+1,p\}}.
\end{aligned}
\end{equation}
with constants independent of $r$ and $\mathbf d$.
The second term tends to zero by \eqref{eq:bk-growth}.
A zero-dimensional stratum is finite and is contained in $W_r$ for $r\gg0$.
Summing \eqref{eq:bk-tail} over $i$, then letting $\mathbf d\to\infty$ and $r\to\infty$, proves \eqref{eq:bk-density}.

For $c\le s_i$,
\begin{align*}
 1-\alpha_{i,P}
 &=Q^{-c}\bigl(1-L(Q,s_i,c)\bigr)\\
 &\le Q^{-c}\sum_{j=0}^{c-1}Q^{j-s_i}
 \le cQ^{-s_i-1}.
\end{align*}
For $c>s_i$, one has $1-\alpha_{i,P}=Q^{-c}\le Q^{-s_i-1}$.
Hence
\[
 \sum_{P\in|Y_i|}(1-\alpha_{i,P})<\infty
\]
by \cite[Remark after Theorem~1.3]{poonen:bertini}.
Every factor is positive.
Since
$-\log x\le2(1-x)$ for $1/2\le x\le1$, the Euler product is positive.

Let $\xi$ belong to the common zero scheme and choose $i$ with $\xi\in Y_i$.
Then $c\le s_i$, and the first jets have rank $c$ at every closed point of the intersection.
Now apply \zcref{prop:closed-to-all-multiplicity}.
\end{proof}

\begin{remark}
The single condition
\[
 d_c^s q^{-d_1/\max\{s+1,p\}}\longrightarrow0,
 \qquad s=\max_i s_i,
\]
is sufficient for \eqref{eq:bk-growth}, but is not needed.
\end{remark}

For $c=1$, \eqref{eq:bk-density} is \eqref{eq:intro-density}.

\section{Hypersurfaces containing a prescribed subscheme}
\label{sec:gw}
Let $Z\subseteq\bP^n_{\Fq}$ be a closed subscheme with ideal sheaf $\mathcal I_Z$ and set
\[
  I_d=H^0(\bP^n_{\Fq},\mathcal I_Z(d))
  \qquad
  \text{and}
  \qquad
  I_{\mathrm{homog}}=\bigcup_{d\ge0}I_d.
\]
For $\cP\subseteq I_{\mathrm{homog}}$, its relative density is
\[
  \mu_Z(\cP)=\lim_{d\to\infty}\frac{\#(\cP\cap I_d)}{\#I_d},
\]
when the limit exists.

If $Y$ is smooth and $V=Z\cap Y$, let $V_e$ be the rank-$e$ stratum of $\Omega^1_{V/\Fq}$, as in \cite[Theorem~1.2]{wutz2016}.
Its closed points are precisely the points $\xi\in V$ with
\[
  \edim_\xi(V)
  =\dim_{\kappa(\xi)}\mfrak_{V,\xi}/\mfrak_{V,\xi}^2=e.
\]

\begin{theorem}
\label{thm:gw-mult}
Let $X\subseteq\bP^n_{\Fq}$ be quasiprojective, let
\[
  X_{\red}=Y_1\sqcup\cdots\sqcup Y_N
\]
be a smooth permissible stratification, and put
\[
  s_i=\dim Y_i,
  \qquad
  V_i=Z\cap Y_i,
  \qquad
  \text{and}
  \qquad
  V_{i,e}=(V_i)_e.
\]
Assume
\begin{equation}
\label{eq:gw-admissible}
  e+\dim V_{i,e}<s_i
\end{equation}
whenever $V_{i,e}\ne\varnothing$.
Let $\cP_Z$ be the set of $f\in I_{\mathrm{homog}}$ such that $Y_i\cap H_f$ is smooth of dimension $s_i-1$ for every $i$, with dimension $-1$ meaning the empty scheme.
Then
\begin{equation}
\label{eq:gw-density}
  \mu_Z(\cP_Z)
  =\prod_{i=1}^N
    \left(
      \zeta_{Y_i\setminus V_i}(s_i+1)^{-1}
      \prod_{e=0}^{s_i-1}\zeta_{V_{i,e}}(s_i-e)^{-1}
    \right)>0.
\end{equation}
For $f\in\cP_Z$ and $\xi\in X\cap H_f$,
\begin{gather*}
  \ord_\xi(f)=1,
  \qquad
  f_\xi\text{ is a nonzerodivisor on }\sheaf O_{X,\xi},
  \\
  \text{and}
  \quad
  e_\xi(X\cap H_f)=e_\xi(X).
\end{gather*}
\end{theorem}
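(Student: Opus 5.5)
The plan mirrors the proof of \zcref{thm:intro-main}: the density statement is a sieve over closed points of the strata, and the multiplicity statement is local.

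\textbf{Local factors.} Fix $i$ and a closed point $\xi\in Y_i$, and put $Q_\xi=q^{\deg\xi}$. By \zcref{lem:smooth-first-jets}, $Y_i\cap H_f$ is smooth of dimension $s_i-1$ at $\xi$ (or misses $\xi$) exactly when the image of $f$ in $\sheaf O_{Y_i,\xi}/\mfrak_{Y_i,\xi}^2$ is not in $\mfrak_{Y_i,\xi}^2$. For $f\in I_d$, this image is constrained to lie in the image of $\mathcal I_Z$.
\begin{itemize}
\item If $\xi\notin Z$, the image is unconstrained. The bad event is ``$f$ vanishes to order $\ge 2$ along $Y_i$ at $\xi$'', with probability $Q_\xi^{-(s_i+1)}$.
\item If $\xi\in V_{i,e}$, then $f$ vanishes at $\xi$, and its first jet ranges over the kernel of $\mfrak_{Y_i,\xi}/\mfrak_{Y_i,\xi}^2\to\mfrak_{V_i,\xi}/\mfrak_{V_i,\xi}^2$. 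Since $Y_i$ is smooth of dimension $s_i$ and the target has dimension $e$, this kernel has dimension $s_i-e$. The bad event (zero first jet) has probability $Q_\xi^{-(s_i-e)}$.
\end{itemize}
These are exactly the local factors in \eqref{eq:gw-density}. For $e\ge s_i$ the factor would be $0$; hypothesis \eqref{eq:gw-admissible} gives $e<s_i$, so all factors are positive and those $e$ do not occur.

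\textbf{Sieve.} I would invoke the sieve for hypersurfaces containing $Z$ from \cite[Theorem~1.1]{gunther} and \cite[Theorems~1.1 and 1.2]{wutz2016} (after \cite[Theorem~1.1]{poonen:subvar}), applied separately to each $Y_i$ with ambient $\bP^n$ and then combined over the finite family of strata.
\begin{itemize}
\item \emph{Low degree.} Surjectivity of $I_d\to H^0$ of $\mathcal I_Z(d)$ restricted to the first-order neighborhoods in $Y_i$ of finitely many points, for $d\gg0$, gives independence and the finite product. This is the same argument as in the proof of \zcref{thm:bk-mult}, with $W_r$ now taken inside the strata.
\item \emph{Medium and high degree.} The tail estimates are those of Gunther and Wutz. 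Their hypothesis is exactly \eqref{eq:gw-admissible}, which makes $\sum_{\xi\in|V_{i,e}|}Q_\xi^{-(s_i-e)}$ converge because $s_i-e>\dim V_{i,e}$. The same condition is what makes the Euler product positive: by \cite[Remark after Theorem~1.3]{poonen:bertini}, $\zeta_{V_{i,e}}(s_i-e)$ converges.
\end{itemize}
The main obstacle is checking that these results apply to a locally closed smooth $Y_i$ with $Z\cap Y_i$ in place of their smooth quasiprojective $X$. This is formally their setting, since $Y_i$ is smooth quasiprojective. Simultaneity over finitely many strata is handled by running the tail bounds for each $i$ and noting that finite unions of negligible sets are negligible, as in the proof of \zcref{thm:bk-mult}.

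\textbf{Multiplicity.} Let $f\in\cP_Z$. At every closed point $\xi$ of $Y_i\cap H_f$, \zcref{lem:smooth-first-jets} shows the image of $f$ in $\mfrak_{Y_i,\xi}/\mfrak_{Y_i,\xi}^2$ is nonzero. \zcref{cor:transverse} with $c=1$ then gives $\ord_\xi(f)=1$, that $f_\xi$ is a nonzerodivisor on $\sheaf O_{X,\xi}$, and $e_\xi(X\cap H_f)=e_\xi(X)$. Passage to non-closed points of $X\cap H_f$ follows from \zcref{prop:closed-to-all-multiplicity} with $c=1$, exactly as at the end of the proof of \zcref{thm:intro-main}.
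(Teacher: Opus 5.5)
Your proposal is correct. The local-factor computation and the multiplicity step (nonzero first jets at closed points, then \zcref{prop:closed-to-all-multiplicity} with $c=1$) coincide with the paper's. The difference lies in how the density is obtained.

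The paper invokes \cite[Proposition~4.8]{ghosh-krishna} once, for the pairwise disjoint smooth strata $Y_i$ with an empty finite-condition scheme. Its numerical hypothesis is exactly \eqref{eq:gw-admissible}, and its Euler product is \eqref{eq:gw-density}.

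You instead apply the single-variety sieve of Gunther and Wutz to each pair $(Y_i,Z)$ and glue the strata by hand. Your gluing is sound. The low-degree conditions at points of all strata are controlled jointly by surjectivity of $I_d$ onto $H^0$ of $\mathcal I_Z\cdot\sheaf O_{W_r}(d)$ for a single finite closed subscheme $W_r$; the first-order neighborhoods have disjoint supports because the strata are disjoint. The medium- and high-degree tail estimates are upper bounds, so they can be summed over the finitely many $i$.

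Your route makes transparent why simultaneity over strata costs nothing. It needs only the one-variety theorems, whose hypotheses (smooth quasiprojective $Y_i$, arbitrary closed $Z\subseteq\bP^n$, condition \eqref{eq:gw-admissible} for each $i$) are met verbatim. The paper's route is shorter, and the same citation immediately yields the variant with finite local conditions $\mathcal T$ on $F$ treated in the remark that follows the theorem.
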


\begin{proof}
Let $P\in V_{i,e}$, put $(A,\mfrak)=\sheaf O_{Y_i,P}$, and let $J\subseteq A$ define $V_i$ at $P$.
Since $Y_i$ is smooth of dimension $s_i$,
\[
 \dim_{\kappa(P)}\mfrak/\mfrak^2=s_i,
 \qquad
 \dim_{\kappa(P)}\frac{\mfrak/J}{(\mfrak/J)^2}=e.
\]
The exact sequence
\[
 0\longrightarrow\frac{J+\mfrak^2}{\mfrak^2}
 \longrightarrow\frac{\mfrak}{\mfrak^2}
 \longrightarrow\frac{\mfrak/J}{(\mfrak/J)^2}
 \longrightarrow0
\]
gives
\[
 \dim_{\kappa(P)}\frac{J+\mfrak^2}{\mfrak^2}=s_i-e.
\]
A section in $I_d$ has first jet in this kernel, and it is singular on $Y_i$ at $P$ precisely when that first jet is zero.
Thus the bad condition has codimension $(s_i-e)\deg P$, as in \cite[Lemma~2.3]{wutz2016}.
If $P\in Y_i\setminus V_i$, the full first-jet space
$\sheaf O_{Y_i,P}/\mfrak_{Y_i,P}^2$ has dimension $s_i+1$ over $\kappa(P)$, so the bad condition has codimension $(s_i+1)\deg P$.
The local factors are
\[
 1-q^{-(s_i+1)\deg \xi},
 \qquad
 1-q^{-(s_i-e)\deg \xi}.
\]

Apply \cite[Proposition~4.8]{ghosh-krishna} to the pairwise disjoint smooth strata $Y_i$, with the finite local-condition scheme empty.
Its numerical hypothesis is \eqref{eq:gw-admissible}, and its Euler product is \eqref{eq:gw-density}.

The first jet is nonzero at every closed point of $Y_i\cap H_f$.
Apply \zcref{prop:closed-to-all-multiplicity} with $c=1$.
\end{proof}
\begin{remark}
Let $F\subseteq\bP^n_{\Fq}$ be finite and disjoint from $Z$.
Since $\mathcal I_Z\sheaf O_F=\sheaf O_F$, choose on each connected component $F_a$ the least $j(a)$ for which $x_{j(a)}$ is invertible and define
\[
 f|_F=(x_{j(a)}^{-d}f|_{F_a})_a\in H^0(F,\sheaf O_F)
 \qquad(f\in I_d).
\]
Let $\varnothing\ne\mathcal T\subseteq H^0(F,\sheaf O_F)$, and put
\[
 U_i=Y_i\setminus F,
 \qquad
 V_i^\circ=Z\cap U_i,
 \qquad
 \text{and}
 \qquad
 V_{i,e}^\circ=(V_i^\circ)_e.
\]
Then \cite[Proposition~4.8]{ghosh-krishna} gives
\begin{align*}
 &\mu_Z\bigl(\{f:
 f|_F\in\mathcal T,
 H_f\cap U_i\text{ is smooth of dimension }s_i-1
 \text{ for all }i\}\bigr)\\
 &\quad=
 \frac{\#\mathcal T}{\#H^0(F,\sheaf O_F)}
 \prod_{i=1}^N
 \biggl(
  \zeta_{U_i\setminus V_i^\circ}(s_i+1)^{-1}
  \prod_{e=0}^{s_i-1}
  \zeta_{V_{i,e}^\circ}(s_i-e)^{-1}
 \biggr).
\end{align*}
The finite factor occurs once; no Euler factor supported on $F\cap X_{\red}$ occurs in the product.

More general first-order Taylor conditions, including finite conditions and conditions defined by locally free quotients of first differentials, are treated in \cite[Theorems~A--C]{Matthew}.
The general higher-order case will be treated in an upcoming preprint by the first-named author.

Suppose that $F$ contains the first infinitesimal neighborhood $\xi^{(2)}_{Y_i}$ for every $\xi\in F\cap Y_i$, and that the image of every $t\in\mathcal T$ in
\[
 H^0(\xi^{(2)}_{Y_i},\sheaf O_{\xi^{(2)}_{Y_i}})
\]
is nonzero.
Then the counted sections are transverse at these points, and the conclusions of \zcref{thm:gw-mult} hold there as well.
\end{remark}

\section{Semiample linear systems}
\label{sec:semiample}

Let $X$ be projective over $\Fq$, not necessarily reduced.
Let $A$ be a very ample Cartier divisor on $X$, let $E$ be a globally generated Cartier divisor, and let
\[
  \pi:X\longrightarrow \bP^M_{\Fq}
\]
be the morphism defined by $|E|$.
Put
\[
  \sheaf L_{n,d}=\sheaf O_X(nA+dE)
  \qquad
  \text{and}
  \qquad
  R_{n,d}=H^0(X,\sheaf L_{n,d}).
\]
For $s\in R_{n,d}$, let $D_s\subseteq X$ be its zero scheme.
If $\xi\in D_s$, choose a trivialization of $\sheaf L_{n,d}$ at $\xi$ and let $f_{s,\xi}\in\sheaf O_{X,\xi}$ be the corresponding local equation.
The principal ideal $(f_{s,\xi})$ is independent of the trivialization.

Fix a smooth permissible stratification
\[
  X_{\red}=Y_1\sqcup\cdots\sqcup Y_N,
  \qquad s_i=\dim Y_i,
\]
and put $C_i=(\overline{Y_i})_{\red}\subseteq X_{\red}$.
Let $B=\pi(X_{\red})_{\red}\subseteq\bP^M_{\Fq}$ with its reduced induced structure.
Let $\rho:X_{\red}\to B$ be the induced morphism and put
\[
 \mathcal F_n=\rho_*\sheaf O_{X_{\red}}(nA).
\]
Here and below, ``smoothly in codimension one'' on $Y_i$ means smooth of dimension $s_i-1$; when $s_i=0$, it means empty.
For $P\in |B|$, set
\[
  P^{(2)}=\Spec(\sheaf O_{B,P}/\mfrak_P^2),
  \qquad
  X_{P^{(2)}}=X_{\red}\times_B P^{(2)}.
\]
Since $P^{(2)}$ is local Artinian, $\sheaf O_B(d)|_{P^{(2)}}$ is trivial.

For fixed $n$ and $P\in |B|$, define
\begin{equation}
\label{eq:stable-image}
 I_P(n)=\operatorname{im}\!\left(
 H^0(P^{(2)},\mathcal F_n|_{P^{(2)}})
 \longrightarrow
 H^0(X_{P^{(2)}},\sheaf O_{X_{\red}}(nA))
 \right).
\end{equation}
For $Q\in Y_i\cap\rho^{-1}(P)$, put
\[
 Q^{(2)}_{Y_i}=\Spec(\sheaf O_{Y_i,Q}/\mfrak_{Y_i,Q}^2).
\]
For $u\in I_P(n)$, let $u_{i,Q}$ denote its image in
\[
 H^0(Q^{(2)}_{Y_i},\sheaf O_{X_{\red}}(nA)|_{Q^{(2)}_{Y_i}}).
\]
Set
\[
 U_P(n)=\{u\in I_P(n):u_{i,Q}\ne0
 \text{ for every }i\text{ and every }Q\in|Y_i\cap\rho^{-1}(P)|\},
\]
and
\begin{equation}
\label{eq:semiample-local-factor}
 \beta_P(n)=\frac{\#U_P(n)}{\#I_P(n)}.
\end{equation}
For $d\gg0$, $\beta_P(n)$ is the proportion of restrictions over $P$ that are transverse to every stratum.

\begin{lemma}
\label{lem:semiample-stable-image}
Let $T\subset|B|$ be finite and put $W_T=\coprod_{P\in T}P^{(2)}$.
After trivializing $\sheaf O_B(d)$ on each component of $W_T$, the image of
\[
 H^0(X_{\red},\sheaf O_{X_{\red}}(nA+dE))
 \longrightarrow
 H^0(X_{\red}\times_BW_T,\sheaf O_{X_{\red}}(nA))
\]
is, for $d\gg0$,
\[
 \bigoplus_{P\in T}I_P(n).
\]
\end{lemma}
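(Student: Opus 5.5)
The plan is to push everything down to $B$ and reduce the statement to Serre vanishing for a coherent sheaf on the projective scheme $B$.

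\emph{Step 1: rewrite the source group on $B$.} Since $E$ is globally generated and defines $\pi$, we have $\sheaf O_X(E)\cong\pi^*\sheaf O_{\bP^M}(1)$. Restricting to $X_{\red}$ and using that $\pi|_{X_{\red}}$ factors through $\rho:X_{\red}\to B$ gives $\sheaf O_{X_{\red}}(nA+dE)\cong\sheaf O_{X_{\red}}(nA)\otimes\rho^*\sheaf O_B(d)$. The projection formula then identifies
\[
 H^0(X_{\red},\sheaf O_{X_{\red}}(nA+dE))\cong H^0(B,\mathcal F_n(d)).
\]
Because $X$ is projective, $\rho$ is proper, so $\mathcal F_n$ is coherent on $B$.

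\emph{Step 2: factor the restriction map.} For $W=W_T$, a closed subscheme of $B$, write $X_W=X_{\red}\times_BW$ and $\rho_W:X_W\to W$ for the base change. The restriction map of the lemma factors as
\[
 H^0(B,\mathcal F_n(d))\xrightarrow{\ r\ } H^0(W,\mathcal F_n(d)|_W)\xrightarrow{\ \phi\ } H^0(X_W,\sheaf O_{X_{\red}}(nA+dE)|_{X_W}).
\]
Here $\phi$ is induced by the base-change adjunction $\mathcal F_n|_W=(\rho_*\sheaf O(nA))|_W\to\rho_{W*}(\sheaf O(nA)|_{X_W})$, twisted by $\sheaf O_B(d)$. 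The fixed trivializations of $\sheaf O_B(d)$ on each component $P^{(2)}$ of $W$ pull back to trivializations of $\rho^*\sheaf O_B(d)$ on $X_W$, and they identify $\phi$ with the untwisted map
\[
 \phi_0:H^0(W,\mathcal F_n|_W)\to H^0(X_W,\sheaf O_{X_{\red}}(nA)).
\]
The map $\phi_0$ is independent of $d$. Because $W=\coprod_{P\in T}P^{(2)}$ is a disjoint union of affine schemes and $X_W=\coprod_PX_{P^{(2)}}$, the map $\phi_0$ splits as a direct sum over $P\in T$ of the maps in \eqref{eq:stable-image}. Hence $\operatorname{im}\phi_0=\bigoplus_{P\in T}I_P(n)$.

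\emph{Step 3: surjectivity of $r$ for $d\gg0$.} Let $\mathcal I_W\subseteq\sheaf O_B$ be the ideal sheaf of $W$. There is a short exact sequence
\[
 0\to(\mathcal I_W\mathcal F_n)(d)\to\mathcal F_n(d)\to(\mathcal F_n|_W)(d)\to0.
\]
Since $\mathcal I_W\mathcal F_n$ is coherent on the projective scheme $B$, Serre vanishing gives $H^1(B,(\mathcal I_W\mathcal F_n)(d))=0$ for $d\gg0$, with the bound depending on $n$ and $T$. So $r$ is surjective for such $d$, and the image of $\phi\circ r$ equals $\operatorname{im}\phi=\operatorname{im}\phi_0=\bigoplus_{P\in T}I_P(n)$, as claimed.

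The main obstacle is the bookkeeping in Step 2. One must check that the composite $\phi\circ r$ really is the restriction map of the lemma under the projection-formula identification. One must also check that the chosen trivializations of $\sheaf O_B(d)$ turn $\phi$ into exactly the $d$-independent map $\phi_0$ of \eqref{eq:stable-image}, rather than an isomorphic but different subspace. Both are naturality statements for the unit $\rho^*\rho_*\to\mathrm{id}$ under base change and tensoring with a line bundle pulled back from $B$. I would verify them affine-locally over each $P^{(2)}$, where all sheaves become modules over $\sheaf O_{B,P}/\mfrak_P^2$.
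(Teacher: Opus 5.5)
Your proof is correct and follows the same route as the paper's. Both use the projection formula to identify the source with $H^0(B,\mathcal F_n(d))$, then Serre vanishing on $B$ to get surjectivity onto $H^0(W_T,\mathcal F_n(d)|_{W_T})$ for $d\gg0$, and then split the base-change map over the components $P^{(2)}$. You spell out bookkeeping that the paper leaves implicit: the exact sequence involving $\mathcal I_W\mathcal F_n$, and the factorization of the restriction map through the trivialized, $d$-independent base-change map.
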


\begin{proof}
By the projection formula, the source is $H^0(B,\mathcal F_n(d))$.
Serre vanishing gives a surjection
\[
 H^0(B,\mathcal F_n(d))
 \longrightarrow
 H^0(W_T,\mathcal F_n(d)|_{W_T})
\]
for $d\gg0$; see \cite[Ch.\,III, Theorem~5.2]{hartshorne}.
The target and the base-change map split over the connected components of $W_T$.
The image is therefore the direct sum above; compare with \cite[Lemma~5.2]{erman-wood}.
\end{proof}

\begin{lemma}
\label{lem:semiample-restriction-closed}
Let $W\subseteq X$ be closed.
There is $n_W$ such that, for $n\ge n_W$ and $d\gg0$, the restriction map
\[
  H^0(X,\sheaf O_X(nA+dE))
  \longrightarrow
  H^0(W,\sheaf O_W(nA+dE))
\]
is surjective.
\end{lemma}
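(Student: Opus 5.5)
The approach is Serre vanishing on $X$, applied to the twisted ideal sheaf of $W$; the only subtlety is making the bound on $d$ uniform, since $n$ ranges over infinitely many values. Let $\mathcal I_W\subseteq\sheaf O_X$ be the ideal sheaf of $W$. Twisting the exact sequence $0\to\mathcal I_W\to\sheaf O_X\to\sheaf O_W\to0$ by $\sheaf L_{n,d}$ and taking cohomology reduces the claim to the vanishing
\[
 H^1\bigl(X,\mathcal I_W(nA+dE)\bigr)=0
\]
for $n\ge n_W$ and $d\gg0$. A fixed $n$ is easy, because $nA+dE=(nA)+dE$ is ample-plus-globally-generated. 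To obtain uniformity we push forward along $\pi$.

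The first step is to fix $n$ and write $\mathcal G_n=\pi_*\bigl(\mathcal I_W(nA)\bigr)$, a coherent sheaf on $\bP^M_{\Fq}$, since $\pi$ is projective. By the projection formula,
\[
 \mathcal I_W(nA+dE)=\mathcal I_W(nA)\otimes\pi^*\sheaf O(d).
\]
The Leray spectral sequence then yields
\[
 0\to H^1(\bP^M,\mathcal G_n(d))\to H^1\bigl(X,\mathcal I_W(nA+dE)\bigr)\to H^0\bigl(\bP^M,R^1\pi_*(\mathcal I_W(nA))(d)\bigr).
\]
The first term vanishes for $d\gg0$ by Serre vanishing. The third term does not vanish for large $d$, so we remove it by choosing $n$ large instead.

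The second step handles $R^1\pi_*$. Since $A$ is ample on $X$, it is $\pi$-ample. Relative Serre vanishing (\cite[Ch.\,III, Theorem~8.8]{hartshorne}, or the relative version of Theorem~5.2) gives $n_W$ such that
\[
 R^j\pi_*\bigl(\mathcal I_W(nA)\bigr)=0\qquad\text{for all }j>0,\ n\ge n_W.
\]
For $n\ge n_W$ the Leray spectral sequence therefore degenerates, so $H^1(X,\mathcal I_W(nA+dE))=H^1(\bP^M,\mathcal G_n(d))$. This group vanishes for $d\ge d_0(n)$ by \cite[Ch.\,III, Theorem~5.2]{hartshorne}. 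Surjectivity of the restriction map follows.

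The step needing the most care is reading the quantifiers correctly. The statement asks for "$n\ge n_W$ and $d\gg0$", which I interpret as a $d$-bound allowed to depend on $n$; the argument above gives exactly that. If a bound uniform in $n$ were required, I would apply Castelnuovo--Mumford regularity on $\bP^M$ to the $\sheaf O_{\bP^M}$-modules $\bigoplus_n\mathcal G_n$. These form a finitely generated module over $\bigoplus_n\pi_*\sheaf O_X(nA)$, so their regularity is bounded linearly in $n$. For the applications, where $n$ is fixed and $d\to\infty$, the weaker form suffices.
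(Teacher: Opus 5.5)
Your argument is correct and is essentially the paper's proof. Relative Serre vanishing \cite[Ch.\,III, Theorem~8.8]{hartshorne} kills $R^j\pi_*(\mathcal I_W(nA))$ for $n\ge n_W$. Ordinary Serre vanishing for $\pi_*\mathcal I_W(nA)\otimes\sheaf O_{\bP^M}(d)$, together with the projection formula, then gives surjectivity with a $d$-bound depending on $n$. The paper phrases this as exactness of the pushed-forward ideal sequence rather than through Leray, which amounts to the same thing.

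Your closing aside on uniformity is not needed. Note also that a regularity bound growing linearly in $n$ would not by itself make the $d$-bound uniform in $n$.
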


\begin{proof}
Let $\sheaf I_W$ be the ideal of $W$.
Relative Serre vanishing gives, for $n\gg0$ and $j>0$,
\[
  R^j\pi_*(\sheaf I_W(nA))=0
\]
by \cite[Ch.\,III, Theorem~8.8]{hartshorne}.
Hence
\[
  0\longrightarrow\pi_*\sheaf I_W(nA)
  \longrightarrow\pi_*\sheaf O_X(nA)
  \longrightarrow\pi_*\sheaf O_W(nA)
  \longrightarrow0
\]
is exact.
Tensor with $\sheaf O_{\bP^M}(d)$, use the projection formula, and apply \cite[Ch.\,III, Theorem~5.2]{hartshorne}.
\end{proof}

\begin{lemma}
\label{lem:semiample-subsystem}
Let $C$ be an integral projective $\Fq$-scheme of dimension $m$, let $A_C$ be very ample, and let $E_C$ be generated by a finite-dimensional subspace
\[
 V\subseteq H^0(C,E_C).
\]
Let $\rho_V:C\to\bP(V^\vee)$ be the induced morphism, let $C^\circ\subseteq C$ be smooth and open, and put $b=\dim\rho_V(C)$.
For
\[
 n\ge\max\{b(m+1)-1,bp+1\},
\]
\cite[Theorem~9.10]{erman-wood} and the estimates in
\cite[Lemmas~3.2--3.4, 5.1--5.4, and~6.1, Propositions~7.1 and~7.2]{erman-wood}
remain valid for $H^0(C,nA_C+dE_C)$, with singular points restricted to $C^\circ$ and fibers taken with respect to $\rho_V$.
\end{lemma}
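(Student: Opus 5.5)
The plan is to re-read the proof of \cite[Theorem~9.10]{erman-wood} and check that every place where the complete linear system $|E|$, the target $\bP^M$, or the global smoothness of the source is used can be replaced by $V$, $\bP(V^\vee)$, or the open set $C^\circ$. Three observations drive this.

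First, since $V$ generates $E_C$, we have $\rho_V^*\sheaf O_{\bP(V^\vee)}(1)\cong\sheaf O_C(E_C)$. Put $B_V=\rho_V(C)$, an integral projective scheme of dimension $b$, and $\mathcal F_n=\rho_{V*}\sheaf O_C(nA_C)$, a coherent sheaf on $B_V$. The projection formula then gives
\[
 H^0(C,nA_C+dE_C)=H^0(B_V,\mathcal F_n(d)).
\]
This holds for every $d$, no matter how large $H^0(C,E_C)$ is compared with $V$. Thus the whole sieve can be run on $B_V\subseteq\bP(V^\vee)$ exactly as in Erman--Wood, with $\rho_V$ in place of the morphism defined by $|E|$.

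Second, for the low-degree step \cite[Lemmas~3.2--3.4]{erman-wood} I will use Serre vanishing for $\mathcal F_n$ on $B_V$. This is the argument of \zcref{lem:semiample-stable-image} with $B$ replaced by $B_V$, and it gives surjectivity onto the jets over finitely many $P^{(2)}$ for $d\gg0$. The medium-degree estimates \cite[Lemmas~5.1--5.4 and~6.1]{erman-wood} use only the following inputs:
\begin{itemize}
\item the restriction of sections to a fiber $\rho_V^{-1}(P)$ and its first-order thickening;
\item the bound $\dim\rho_V^{-1}(P)\le m$ together with $\dim B_V=b$;
\item the requirement $n\ge b(m+1)-1$, which makes $nA_C$ separate enough first-order jets along fibers.
\end{itemize}
I will verify that in each lemma the dimension of the base enters only through $b$. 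Erman--Wood's $\dim\pi(X)$ is then exactly our $b$, and the numerical hypothesis on $n$ is the one they impose.

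Third, restricting singular points to $C^\circ$ only removes closed points from the sieve. Each tail estimate is an upper bound on the probability that some bad point of a given degree range exists, and bad points are only counted in $C^\circ$, so these bounds can only decrease. Smoothness was used in Erman--Wood only at the points being sieved, to give local coordinates and the structure of $\Omega^1$. Since $C^\circ$ is open and smooth, those local arguments apply verbatim at its points. The Euler product in \cite[Theorem~9.10]{erman-wood} then runs over $|C^\circ|$, grouped by fibers of $\rho_V$.

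The main obstacle I expect is the high-degree step, \cite[Propositions~7.1 and~7.2]{erman-wood}. There the Poonen decoupling trick writes a section as $f_0+g_1^p t_1+\cdots+g_b^pt_b+h^p$, using coordinates pulled back from the base together with the bound $n\ge bp+1$. I would first check that the $t_j$ can be taken as pullbacks of linear forms on $\bP(V^\vee)$ that are local parameters on $B_V$ near the relevant points; a Noether normalization of $B_V$ inside $\bP(V^\vee)$ gives exactly $b$ of them. I would then check that the degree bookkeeping uses $b$ and not $\dim\bP(V^\vee)$, and that the derivative conditions are only imposed at points of $C^\circ$. If some step instead used coordinates along the fibers, I would replace it by the corresponding affine cover of $C^\circ$ by opens étale over affine space.
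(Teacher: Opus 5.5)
Your overall strategy, rerunning \cite{erman-wood} with $V$ in place of the complete series, is also the paper's, and parts of it match. The projection formula for $\rho_{V*}\sheaf O_C(nA_C)$ together with Serre vanishing is how the paper obtains \cite[Lemma~5.2]{erman-wood} and the low-degree step, and restricting bad points to $C^\circ$ is harmless. The gap is in the step you yourself flag as the main obstacle. Poonen-style decoupling $f=f_0+\sum_j g_j^pt_j+h^p$ works because $D_if=D_if_0+\sum_j g_j^p\,D_it_j$ on an open subset of $C^\circ$. So the differentials of the $t_j$ must be part of a basis of $\Omega^1_{C^\circ}$, and in the end every derivative direction of $C^\circ$ must be controlled, not only $b$ of them.

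Pullbacks of linear forms on $\bP(V^\vee)$ cannot provide this in general, because for a subsystem $V$ the map $\rho_V$ can be inseparable. Take $C=\bP^1$, $A_C=\sheaf O(1)$, $E_C=\sheaf O(p)$ and $V=\langle x^p,y^p\rangle$. All hypotheses hold and $\rho_V$ is the Frobenius. Every function pulled back from $\bP(V^\vee)$ is then a $p$-th power with zero differential, so $D(g^pt)=0$ and nothing is decoupled. Even when $\rho_V$ is separable, such $t_j$ fail at points where $d\rho_V$ drops rank and over $\Sing(B_V)$, where ``local parameters on $B_V$'' are not available. Also, over $\Fq$ a linear Noether normalization of $B_V$ need not exist. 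Your fallback sentence about \'etale coordinates on $C^\circ$ is where the actual work lies, and the proposal does not say how the degree bookkeeping would go in that case.

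The missing idea is the paper's: it builds no new coordinates at all. Let $\iota$ be the embedding given by $A_C$. The graph $j=(\iota,\rho_V)\colon C\to\bP^N\times\bP(V^\vee)$ is a closed immersion with $j^*\sheaf O(1,0)=A_C$ and $j^*\sheaf O(0,1)=E_C$. So $(C,A_C,E_C)$ sits in exactly Erman--Wood's bihomogeneous setting, with $\bP(V^\vee)$, whose coordinates are a basis of $V$, replacing the target of the complete series. Because $j$ is a closed immersion, the differentials of the ambient coordinates generate $\Omega^1_C$. Hence local coordinates on $C^\circ$ come from the product as in Erman--Wood, whether or not $\rho_V$ is separable.

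Their arguments then transfer verbatim:
\begin{itemize}
\item the fiber-degree bound of \cite[Lemma~5.1]{erman-wood}, which your picture on $B_V$ does not supply even though you need it to count points in fibers;
\item the computation of \cite[Lemma~5.3]{erman-wood}, via the restriction $H^0(\bP^N\times\bP(V^\vee),\sheaf O(a,d))\to H^0(C,aA_C+dE_C)$;
\item the decoupling of \cite[Lemma~5.4]{erman-wood}, where $f_0$ chosen uniformly in $H^0(C,nA_C+dE_C)$ stays uniform after the $p$-th power terms are added.
\end{itemize}
Only $b=\dim\rho_V(C)$ enters the counts, which gives the same bound on $n$.
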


\begin{proof}
Let $\iota:C\hookrightarrow\bP^N$ be the embedding defined by $A_C$.
After choosing a basis of $V$, the graph map
\[
 j=(\iota,\rho_V):C\longrightarrow\bP^N\times\bP(V^\vee)
\]
is a closed immersion by
\cite[\href{https://stacks.math.columbia.edu/tag/01KS}{Tag~01KS}]{stacks-project}, and
\[
 j^*\sheaf O(1,0)=A_C
 \qquad
 \text{and}
 \qquad
 j^*\sheaf O(0,1)=E_C.
\]
Put
\[
 S_{a,d}=H^0\bigl(\bP^N\times\bP(V^\vee),\sheaf O(a,d)\bigr)
 \qquad
 \text{and}
 \qquad
 R_{a,d}=H^0(C,aA_C+dE_C).
\]
Restriction along $j$ gives $S_{a,d}\to R_{a,d}$.
The proofs in
\cite[\S\,5--7]{erman-wood} use the second projective factor through
this restriction map and the morphism induced by its homogeneous
coordinates.
With $j(C)$ in place of the embedding used there:
\begin{enumerate}[label=\textup{(\roman*)}]
\item \cite[Lemma~5.1]{erman-wood} bounds the sum of the $A_C$-degrees of the components of a fiber of $\rho_V$ by a constant times the degree of its image point;
\item the projection formula for $(\rho_V)_*\sheaf O_C(aA_C)$ and Serre vanishing give \cite[Lemma~5.2]{erman-wood};
\item the bihomogeneous-coordinate calculation in \cite[Lemma~5.3]{erman-wood} applies to $S_{a,d}\to R_{a,d}$;
\item in \cite[Lemma~5.4]{erman-wood}, the initial summand is chosen uniformly in $R_{n,d}$, so adding the auxiliary $p$-th power terms preserves the uniform distribution.
The bounds decoupling derivatives therefore apply with the coordinates of $\bP(V^\vee)$.
\end{enumerate}
These are the only parts from the complete linear series of $E_C$ in
\cite[Lemmas~3.2--3.4 and~6.1, Propositions~7.1 and~7.2]{erman-wood} that we are picking up.
Replacing
that series by $V$ changes neither their proofs nor the bound
$n\ge\max\{b(m+1)-1,bp+1\}$.
\end{proof}

Put
\[
 b_i=\dim\rho(C_i).
\]
Choose $\nu_i$ such that \zcref{lem:semiample-restriction-closed}, applied on $X_{\red}$ with $W=C_i$, holds for $n\ge\nu_i$, and set
\begin{equation}
\label{eq:semiample-n0}
 n_0=\max_i\{\nu_i,\ b_i(s_i+1)-1,\ b_ip+1,\ 1\}.
\end{equation}

\begin{proposition}
\label{prop:stratified-semiample-sieve}
Let $n\ge n_0$.
As $d\to\infty$, the density of sections of
\[
 H^0(X_{\red},\sheaf O_{X_{\red}}(nA+dE))
\]
whose zero scheme meets every $Y_i$ smoothly in codimension one is
\begin{equation}
\label{eq:semiample-product}
 \prod_{P\in|B|}\beta_P(n).
\end{equation}
The product converges, is zero only if one of its factors is zero, and is nonzero for all sufficiently large $n$.
\end{proposition}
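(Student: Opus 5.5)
We will run the Erman--Wood semiample sieve, stratum by stratum, in the three-range form (low, medium, high degree) of Poonen's closed-point sieve, but indexed by closed points $P\in|B|$ of the base rather than by points of $X_{\red}$.

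\textbf{Low degree.} Fix $r$ and let $T=\{P\in|B|:\deg P<r\}$. By \zcref{lem:semiample-stable-image}, for $d\gg0$ the image of
\[
 H^0(X_{\red},\sheaf O_{X_{\red}}(nA+dE))\to H^0(X_{\red}\times_BW_T,\sheaf O_{X_{\red}}(nA))
\]
is $\bigoplus_{P\in T}I_P(n)$. The fibres of a surjective linear map have equal size, so the restriction is equidistributed. Every closed point $Q$ of a stratum $Y_i$ lies over a unique $P=\rho(Q)$, and $Q^{(2)}_{Y_i}$ maps into $X_{P^{(2)}}$ because $\mfrak_P$ pulls back into $\mfrak_{Y_i,Q}$. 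Hence whether the section is transverse at all $Q$ over $P$ depends only on its image in $I_P(n)$. It follows that the density of sections that are good at every $Q$ with $\deg\rho(Q)<r$ is exactly $\prod_{P\in T}\beta_P(n)$.

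\textbf{Medium and high degree.} We need, uniformly in $d$, that the upper density of sections with a bad point $Q\in Y_i$ and $\deg\rho(Q)\ge r$ tends to $0$ as $r\to\infty$. Work separately for each $i$ on the integral projective scheme $C_i$, with $C^\circ=Y_i$ (open in $C_i$ and smooth), $m=s_i$ and $b=b_i$. Take $V$ to be the image of $H^0(\bP^M,\sheaf O(1))$, so that $\rho_V=\rho|_{C_i}$.
\begin{itemize}
\item By \zcref{lem:semiample-restriction-closed} and the choice $n\ge\nu_i$, the restriction $H^0(X_{\red},nA+dE)\to H^0(C_i,nA+dE)$ is surjective for $d\gg0$. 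So the pushforward of the uniform measure is uniform, and bad events on $C_i$ may be measured in $R_{n,d}(C_i)$.
\item Since $n\ge\max\{b_i(s_i+1)-1,\,b_ip+1\}$ by \eqref{eq:semiample-n0}, \zcref{lem:semiample-subsystem} applies. The Erman--Wood medium- and high-degree estimates (their Lemmas~5.4, 6.1 and Propositions~7.1--7.2) then bound the probability of a singular point on $Y_i$ whose image has degree at least $r$. The bound tends to $0$ as $r\to\infty$ after $d\to\infty$.
\end{itemize}
Summing over the finitely many $i$ and comparing with the low-degree count gives \eqref{eq:semiample-product}, as a limit of partial products that are monotone in $r$.

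\textbf{Convergence and positivity.} For each $P$ we bound $1-\beta_P(n)$ by the sum over $Q\in|Y_i\cap\rho^{-1}(P)|$ of the probability that $u_{i,Q}=0$. This is at most $\sum_Q q^{-\deg Q\cdot\operatorname{rank}}$. For large $\deg P$, the image $I_P(n)$ surjects onto each $H^0(Q^{(2)}_{Y_i},\cdot)$, and the rank is at least the fibre-plus-base contribution used in Erman--Wood. Summability of these bounds follows from their convergence argument, which needs the same inequality $n\ge b_i(s_i+1)-1$. Convergence then reduces to summability of $1-\beta_P(n)$. Once this holds, the product vanishes only if some factor vanishes, by the standard $-\log x\le 2(1-x)$ argument used in the proof of \zcref{thm:bk-mult}.

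For nonvanishing when $n\gg0$, we show that $U_P(n)\neq\varnothing$ for every $P$. As $n$ grows, $\mathcal F_n|_{P^{(2)}}$ generates the restriction to $X_{P^{(2)}}$, because $A$ is relatively very ample and by relative Serre vanishing. So $I_P(n)$ surjects onto $\bigoplus_Q H^0(Q^{(2)}_{Y_i},\cdot)$, and we can prescribe a nonzero constant at each $Q$. Only finitely many $P$ require $n$ beyond the uniform bound, and on the rest positivity already holds.

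\textbf{Main obstacle.} We expect the hardest step to be the transfer of Erman--Wood's high-degree estimate, with its decoupling of derivatives via $p$-th powers, from the smooth scheme itself to a smooth open stratum $Y_i$ inside a possibly singular $C_i$. This must be done uniformly in $d$. We will rely on \zcref{lem:semiample-subsystem} for it, while checking that the fibre-degree bound of their Lemma~5.1 only sees $\rho|_{C_i}$.
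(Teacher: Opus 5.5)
Your low-degree count via \zcref{lem:semiample-stable-image} and your transfer of the Erman--Wood medium/high-degree estimates to $(C_i,Y_i)$ match the paper's derivation of \eqref{eq:semiample-product}. The gaps are in the convergence and nonvanishing claims.

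For convergence, a union bound over the closed points $Q$ of a fibre gives nothing. For fixed $n$ and $P$, $I_P(n)$ is a finite group and $u\mapsto u_{i,Q}$ is additive, so $\Pr(u_{i,Q}=0)\ge 1/\#I_P(n)$ for every $Q$. When $Y_i\cap\rho^{-1}(P)$ is positive-dimensional it has infinitely many closed points, so your sum is $+\infty$. For the same reason, $I_P(n)$ cannot surject onto $H^0(Q^{(2)}_{Y_i},\cdot)$, which has $q^{(s_i+1)\deg Q}$ elements, once $\deg Q/\deg P$ is large compared with $n$. The paper uses \cite[Lemma~6.1]{erman-wood} pointwise only for relative degree $a\le (n+1)/(s_i+1)$. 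High relative degree is controlled by the $p$-th-power decoupling on global sections as $d\to\infty$, not by rank counting inside $I_P(n)$. The paper therefore takes the union bound only over strata: $1-\beta_P(n)\le\sum_i\sigma_{i,P}(n)$, where $\sigma_{i,P}(n)$ is the limiting probability that a global section is singular somewhere on $Y_i$ above $P$. It then gets $\sum_P\sigma_{i,P}(n)<\infty$ from \cite[Proposition~7.1]{erman-wood} via \zcref{lem:semiample-subsystem}.

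Your nonvanishing argument fails for the same reason. There is no surjection of $I_P(n)$ onto $\bigoplus_Q H^0(Q^{(2)}_{Y_i},\cdot)$ over the infinitely many $Q$ in a fibre. Nor can you ``prescribe a nonzero constant at each $Q$'': if the fibre of $Y_i$ is projective of positive dimension, every section of the ample $nA$ vanishes somewhere on it. Transversality along a fibre is itself a sieve problem over $\kappa(P)$. In addition, ``only finitely many $P$ require larger $n$'' is circular. The finite set of $P$ with $\beta_P(n)<\tfrac12$ depends on $n$, and enlarging $n$ to handle it can create new exceptional $P$ unless your tail bounds are uniform in $n$, which you have not shown.

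The paper proves this part separately, in \zcref{lem:semiample-positive-factors}, with a lower bound uniform in $P$:
\begin{itemize}
\item \zcref{lem:uniform-fiber-points} gives $\#(Y_i\cap\rho^{-1}(P))(\mathbb F_{q^{ea}})\le c_ieq^{eas_i}$, hence $\prod_i\gamma_{i,P}\ge\delta>0$ for all $P$.
\item Fibre points of degree at most $r$ are handled by surjectivity onto the finite scheme $W_{r,P}$, using \zcref{lem:semiample-restriction-closed}.
\item Medium relative degree is handled by \cite[Lemma~6.1]{erman-wood}.
\item High relative degree is handled by \cite[Lemma~5.4(3)]{erman-wood} once $n\ge n_{\mathrm{high}}$.
\end{itemize}
Together these give $\beta_P(n)\ge\delta/2$ for every $P$ and every $n\ge n_1$.
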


\begin{proof}
For finite $T\subset|B|$, \zcref{lem:semiample-stable-image} gives the limiting probability
\[
 \prod_{P\in T}\beta_P(n)
\]
for transversality over $T$.

Fix $i$ and set
\[
 V_i=\operatorname{im}\!\left(
 H^0(X,E)\longrightarrow H^0(C_i,E|_{C_i})
 \right).
\]
The space $V_i$ generates $E|_{C_i}$ and defines $\rho|_{C_i}$.
For $n\ge n_0$ and $d\gg0$, restriction from $X_{\red}$ to $C_i$ is surjective by \zcref{lem:semiample-restriction-closed}; a uniform global section therefore restricts uniformly.
Apply \zcref{lem:semiample-subsystem} to
\[
 (C_i,Y_i,A|_{C_i},E|_{C_i},V_i).
\]
The medium- and high-degree estimates of \cite[Lemmas~3.3 and~3.4]{erman-wood} yield
\[
 \lim_{r\to\infty}\limsup_{d\to\infty}
 \Pr\!\left(
 D_s\cap Y_i\text{ is singular above some }P,
 \ \deg P\ge r
 \right)=0.
\]
Summing over the finitely many strata and then letting $r\to\infty$ proves \eqref{eq:semiample-product}.

Let $\sigma_{i,P}(n)$ be the limiting probability of a singular point on $Y_i$ above $P$.
Then
\[
 1-\beta_P(n)\le\sum_i\sigma_{i,P}(n).
\]
By \zcref{lem:semiample-subsystem} and \cite[Proposition~7.1]{erman-wood},
\[
 \sum_{P\in|B|}\sigma_{i,P}(n)<\infty.
\]
Hence $\sum_P(1-\beta_P(n))<\infty$.
If no factor vanishes, then $\beta_P(n)\ge\tfrac12$ for all but finitely many $P$, and
\[
 1-\beta_P(n)\le-\log\beta_P(n)
 \le2\bigl(1-\beta_P(n)\bigr).
\]
Thus the product is positive.
If $\beta_P(n)=0$ for some $P$, every partial product containing that factor is zero.
\end{proof}

\begin{lemma}
\label{lem:uniform-fiber-points}
For each $i$ there is a constant $c_i>0$ such that, for every $P\in|B|$ of degree $e$ and every $a\ge1$,
\begin{equation}
\label{eq:uniform-fiber-count}
 \#\bigl((Y_i\cap\rho^{-1}(P))(\mathbb F_{q^{ea}})\bigr)
 \le c_i e q^{ea s_i}.
\end{equation}
\end{lemma}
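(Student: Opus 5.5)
We bound the points of the fiber $Y_i\cap\rho^{-1}(P)$ by a Lang--Weil type bound for its closure, uniform in $P$. The key input is that the fiber $F_P=C_i\cap\rho^{-1}(P)$, a closed subscheme of the projective scheme $C_i\subseteq X$, has dimension at most $s_i$ and bounded complexity relative to $\deg P$. This bounded complexity is what \cite[Lemma~5.1]{erman-wood} (as adapted in \zcref{lem:semiample-subsystem}(i)) provides: the sum of the $A$-degrees of the irreducible components of $F_P$ is at most a constant $\gamma_i$ times $\deg P=e$.

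First, $\rho^{-1}(P)$ is defined over $\Fq$, but its $\mathbb F_{q^{ea}}$-points lie over the $\mathbb F_{q^{ea}}$-points of $P\times_{\Fq}\mathbb F_{q^{ea}}$. The latter consists of $e$ geometric points, each rational over $\mathbb F_{q^{ea}}$ since $e\mid ea$. Base changing to $\overline{\Fq}$, the fiber over each geometric point $\bar P_j$ is a closed subscheme of $C_i\otimes\overline{\Fq}$ embedded by $A$ in some fixed $\bP^{N}$, of dimension at most $s_i$ since it sits inside $C_i$ (of dimension $s_i$). The $e$ geometric fibers are Galois conjugate, so each has the same total degree of components, bounded by $\gamma_i$, a constant independent of $P$. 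Here I would use that the degree bound in \cite[Lemma~5.1]{erman-wood} is linear in $\deg P$ and divides evenly among the $e$ conjugates.

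Second, apply the elementary projective point bound: a closed subvariety $V\subseteq\bP^N$ over $\mathbb F_{Q}$ with components of dimension $\le s$ and total degree $\delta$ satisfies $\#V(\mathbb F_Q)\le \delta\cdot\#\bP^s(\mathbb F_Q)\le 2\delta Q^s$. This holds by the Lachaud--Rolland / Schwarz-style bound or by a projection argument, needs no Lang--Weil error term, and is uniform. With $Q=q^{ea}$, each conjugate fiber contributes at most $2\gamma_i q^{eas_i}$ rational points. Since $Y_i\cap\rho^{-1}(P)\subseteq F_P$, summing over the $e$ geometric points gives \eqref{eq:uniform-fiber-count} with $c_i=2\gamma_i$.

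The main obstacle is the uniform degree bound in the first step, particularly making sure it applies to $C_i$ with the restricted linear system $V_i$ rather than the complete system. I would derive it directly: $\rho^{-1}(P)$ is cut out in $C_i$ by the pullbacks of linear forms on $\bP^M$ vanishing on $P$, and $P$ is cut out set-theoretically by forms of degree $\le \deg P$. Bézout's inequality in the graph embedding $j$ of \zcref{lem:semiample-subsystem}, with multidegree bookkeeping for $\sheaf O(1,0)$ and $\sheaf O(0,1)$, then bounds the total $A$-degree of $F_P$ by $\deg_A(C_i)\cdot(\text{const})\cdot e$ up to a factor. If the Bézout bound comes out as a power of $e$ rather than linear, I would instead bound the number of geometric points over one conjugate $\bar P_j$, a closed point of $\bP^M_{\overline{\Fq}}$ cut out by $M$ linear forms. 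That gives a constant bound per conjugate, which is the linear-in-$e$ statement needed.
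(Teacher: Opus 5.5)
Your proposal is correct and is essentially the paper's argument. The paper likewise takes from \cite[Lemma~5.1]{erman-wood}, via \zcref{lem:semiample-subsystem}, the bound $c_i'e$ on the total $A$-degree of the geometric components of $C_i\cap\rho^{-1}(P)$; base changes to $\kappa(P)=\mathbb F_{q^e}$, which amounts to your splitting into the $e$ Frobenius-conjugate geometric fibres; and applies the uniform point bound \cite[Lemma~2.4]{bucur-kedlaya} where you use a Lachaud--Rolland-type bound, obtaining $2^{s_i}c_i'e\,q^{eas_i}$. Your B\'ezout fallback is not needed; read it as bounding the degree (not the number of geometric points) of each fibre over $\bar P_j$, e.g.\ by cutting the graph $j(C_i)$ with the $(1,1)$-forms $x_k\ell$ in the Segre embedding.
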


\begin{proof}
By \zcref{lem:semiample-subsystem} and \cite[Lemma~5.1]{erman-wood}, there is a constant $c_i'>0$, independent of $P$, such that the sum of the $A$-degrees of the geometric irreducible components of
$C_i\cap\rho^{-1}(P)$ is at most $c_i'e$.
After base change to $\kappa(P)=\mathbb F_{q^e}$, apply
\cite[Lemma~2.4]{bucur-kedlaya} to each irreducible component.
Since every component has dimension at most $s_i$ and the sum of their degrees is at most $c_i'e$,
\[
 \#\bigl((C_i\cap\rho^{-1}(P))(\mathbb F_{q^{ea}})\bigr)
 \le 2^{s_i}c_i'e q^{ea s_i}.
\]
The assertion follows after enlarging the constant.
\end{proof}

Let $\rho_i:C_i\to B$ be the restriction of $\rho$.
For $P\in|B|$, put
\[
 C_{i,P^{(2)}}=C_i\times_BP^{(2)}
\]
and define
\begin{equation}
\label{eq:stable-image-stratum}
 I_{i,P}(n)=\operatorname{im}\!\left(
 H^0\!\left(P^{(2)},(\rho_i)_*\sheaf O_{C_i}(nA)|_{P^{(2)}}\right)
 \longrightarrow
 H^0\!\left(C_{i,P^{(2)}},\sheaf O_{C_i}(nA)\right)
 \right).
\end{equation}

\begin{lemma}
\label{lem:semiample-stratum-image}
If $n\ge n_0$, restriction induces a surjection
\[
 I_P(n)\longrightarrow I_{i,P}(n)
\]
for every $i$ and every $P\in|B|$.
\end{lemma}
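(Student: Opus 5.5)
The plan is to compare the two images through a commutative square of restriction maps and to realize $I_{i,P}(n)$ by sections coming from $X_{\red}$. Write $\mathcal F_n=\rho_*\sheaf O_{X_{\red}}(nA)$ and $\mathcal G_n=(\rho_i)_*\sheaf O_{C_i}(nA)$. The closed immersion $C_i\hookrightarrow X_{\red}$ gives a map $\mathcal F_n\to\mathcal G_n$ of coherent sheaves on $B$, and a compatible restriction $H^0(X_{P^{(2)}},\sheaf O_{X_{\red}}(nA))\to H^0(C_{i,P^{(2)}},\sheaf O_{C_i}(nA))$, since $C_{i,P^{(2)}}=C_i\times_{X_{\red}}X_{P^{(2)}}$. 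Together these form a commutative square whose horizontal arrows are the base-change maps defining $I_P(n)$ and $I_{i,P}(n)$. Restriction therefore carries $I_P(n)$ into $I_{i,P}(n)$, and the real content is surjectivity.

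For surjectivity, I would show that $\mathcal F_n\to\mathcal G_n$ is itself surjective once $n\ge n_0$. Since $n_0\ge\nu_i$, \zcref{lem:semiample-restriction-closed} with $W=C_i$ is available. More precisely, its proof (relative Serre vanishing for $\rho$, which is projective because $A$ is ample) gives $R^1\rho_*(\sheaf I_{C_i}(nA))=0$ for $n\ge\nu_i$. The pushforward of
\[
0\to\sheaf I_{C_i}(nA)\to\sheaf O_{X_{\red}}(nA)\to\sheaf O_{C_i}(nA)\to0
\]
then yields $\mathcal F_n\twoheadrightarrow\mathcal G_n$. I will take $\nu_i$ to be chosen so that this vanishing holds, which is how the proof of \zcref{lem:semiample-restriction-closed} produces it.

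Right exactness of pullback gives $\mathcal F_n|_{P^{(2)}}\twoheadrightarrow\mathcal G_n|_{P^{(2)}}$. Because $P^{(2)}$ is affine (Artinian local), taking global sections preserves this surjection, so $H^0(P^{(2)},\mathcal F_n|_{P^{(2)}})\to H^0(P^{(2)},\mathcal G_n|_{P^{(2)}})$ is surjective. Now take $v\in I_{i,P}(n)$. Write $v$ as the image of some $g\in H^0(P^{(2)},\mathcal G_n|_{P^{(2)}})$, lift $g$ to $\tilde g$ upstairs, and let $u$ be the image of $\tilde g$ in $I_P(n)$. Commutativity of the square shows that $u$ restricts to $v$.

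I expect the main obstacle to be bookkeeping: confirming that the base-change maps commute with the morphism of pushforwards, and that the vanishing is really available for $n\ge\nu_i$ as opposed to only the global surjectivity stated in \zcref{lem:semiample-restriction-closed}. If $\nu_i$ only guarantees global surjectivity, I would instead enlarge $\nu_i$ by relative Serre vanishing; the definition \eqref{eq:semiample-n0} permits this.
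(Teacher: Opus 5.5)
Your proposal is correct and follows the paper's proof essentially step for step. Relative Serre vanishing from the proof of \zcref{lem:semiample-restriction-closed} gives the surjection $\rho_*\sheaf O_{X_{\red}}(nA)\to(\rho_i)_*\sheaf O_{C_i}(nA)$, which survives restriction to the affine scheme $P^{(2)}$ and passage to global sections, and compatibility with the two base-change maps then yields $I_P(n)\twoheadrightarrow I_{i,P}(n)$; your caveat that $\nu_i$ must be read as the threshold for the vanishing $R^1\rho_*(\sheaf I_{C_i}(nA))=0$, rather than only for global surjectivity, is exactly how the paper uses it when it writes ``by the proof of'' that lemma.
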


\begin{proof}
By the proof of \zcref{lem:semiample-restriction-closed}, the choice $n\ge n_0\ge\nu_i$ gives a surjection
\[
 \rho_*\sheaf O_{X_{\red}}(nA)
 \longrightarrow
 (\rho_i)_*\sheaf O_{C_i}(nA).
\]
Restrict to the affine scheme $P^{(2)}$ and take global sections.
The resulting surjection is compatible with the two base-change maps to
$X_{P^{(2)}}$ and $C_{i,P^{(2)}}$; hence it induces a surjection of their images
\[
 I_P(n)\longrightarrow I_{i,P}(n).
\]
\end{proof}

\begin{lemma}
\label{lem:semiample-positive-factors}
There is an integer $n_1\ge n_0$ such that
\[
 \beta_P(n)>0
\]
for every $n\ge n_1$ and every $P\in|B|$.
\end{lemma}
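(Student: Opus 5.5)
We must show that for $n$ large, $\beta_P(n)>0$ for every $P\in|B|$, uniformly in $P$. Equivalently, $U_P(n)$ is nonempty: we need some $u\in I_P(n)$ whose image $u_{i,Q}$ in $H^0(Q^{(2)}_{Y_i},\cdot)$ is nonzero for every stratum $i$ and every closed point $Q\in Y_i$ over $P$.

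The plan is a counting argument inside the finite $\kappa(P)$-vector space $I_P(n)$ (a union bound over bad events), made uniform in $P$ via \zcref{lem:uniform-fiber-points}. Fix $i$ and $Q\in|Y_i\cap\rho^{-1}(P)|$. The event $u_{i,Q}=0$ is a linear condition on $u$, namely membership in the kernel $K_{i,Q}$ of the composite
\[
 I_P(n)\to I_{i,P}(n)\to H^0(Q^{(2)}_{Y_i},\sheaf O(nA)).
\]
The first map is surjective by \zcref{lem:semiample-stratum-image}. I want to show the composite has image of $\kappa(Q)$-dimension at least one, and in fact that the fraction of $u$ with $u_{i,Q}=0$ is at most $\#\kappa(Q)^{-1}$ (ideally much smaller). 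A uniformly chosen $u$ maps to a uniform element of the image, so
\[
 \Pr(u_{i,Q}=0)=(\#\operatorname{im})^{-1}.
\]
I need the image to be large; the minimal guaranteed case is when it contains the constants, i.e.\ surjects onto $H^0(Q,\sheaf O(nA)|_Q)=\kappa(Q)$. For $n$ large this should follow because $\sheaf O_{C_i}(nA)$ is relatively very ample for $\rho_i$, which separates points and tangent vectors on fibers. By relative Serre vanishing (\cite[Ch.\,III, Theorem~8.8]{hartshorne}) applied to the ideal of the reduced fiber, $(\rho_i)_*\sheaf O_{C_i}(nA)\otimes\kappa(P)$ surjects onto $H^0$ of the reduced fiber twisted by $nA$, for $n\ge n_1$ uniformly in $P$, since $B$ is quasi-compact and these are coherent statements over $B$. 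I would then go further and get surjection onto the first-order neighborhood of $Q$ in $Y_i$ together with the $Q^{(2)}$-directions coming from $P^{(2)}$; this is where working over $P^{(2)}$ rather than $P$ enters.

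Granting that the map onto $H^0(Q^{(2)}_{Y_i},\cdot)$ has image of dimension $\ge$ (fiber tangent dimension $+1$) over $\kappa(Q)$, the bad probability at $Q$ is at most $q^{-\deg Q\,(t+1)}$, where $t\ge s_i-b_i$ or so. Summing over $Q$ of degree $ea$ using \eqref{eq:uniform-fiber-count} gives a bound $\sum_a c_ieq^{eas_i}q^{-ea(\cdot)}$. Naively this does not converge to something less than $1$, so a union bound alone fails for points of large degree in positive-dimensional fibers.

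This is the main obstacle. To handle it, I would instead fix $P$ and view $U_P(n)$ as the set of $u$ whose zero locus on the fiber-thickening $Y_i\times_BP^{(2)}$ is "smooth relative to the strata". I would then apply a Poonen-type sieve in the variable $n\to\infty$ on the (finitely many, by quasi-compactness and constructibility, isomorphism types up to twisting) fibers, via Bertini over finite fields for the very ample $nA$ on the fiber, \cite[Theorem~1.3]{poonen:bertini}. This gives positive density in $n$ for each $P$. Uniformity in $P$ comes from the uniform fiber bounds of \zcref{lem:uniform-fiber-points} feeding the high-degree tail estimates, so there is a single $n_1$. If that proves too delicate, the fallback is to note that the statement is only needed for $P$ of bounded degree (large-degree $P$ have $\beta_P(n)\ge 1/2$ by the summability in \zcref{prop:stratified-semiample-sieve}), and to treat the finitely many remaining $P$ individually, taking $n_1$ as the maximum of the resulting thresholds.
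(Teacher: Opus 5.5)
There is a genuine gap, and it sits at the actual content of the lemma. You never produce a lower bound for $\beta_P(n)$ that is uniform in $P$ and valid for all large $n$.

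\begin{itemize}
\item Your Poonen-in-$n$ route yields at best a threshold $n(P)$ for each fixed $P$, with no control as $\deg P$ grows.
\item The claim that the fibers fall into finitely many isomorphism types is false in general. Closed fibers of $\rho$ vary in moduli and live over the varying fields $\kappa(P)$; constructibility gives finitely many strata of $B$, not finitely many fibers.
\item \cite[Theorem~1.3]{poonen:bertini} does not apply as stated. The scheme $Y_i\times_BP^{(2)}$ is not smooth, and the condition $u_{i,Q}\ne0$ involves base directions coming from $\mfrak_P/\mfrak_P^2$.
\item Your fallback is circular. The summability $\sum_P(1-\beta_P(n))<\infty$ in \zcref{prop:stratified-semiample-sieve} holds for each fixed $n$, so the finite set of $P$ with $\beta_P(n)<\tfrac12$ depends on $n$. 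Taking a maximum of finitely many thresholds therefore does not give one $n_1$ that works for all $n\ge n_1$.
\end{itemize}

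What is needed is a three-range estimate whose constants are independent of both $P$ and $n$.

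\begin{itemize}
\item \textbf{Medium range.} Your jet count $t\ge s_i-b_i$ is too pessimistic. For $Q$ of relative degree $a\le(n+1)/(s_i+1)$, the argument of \cite[Lemma~6.1]{erman-wood}, in the form of \zcref{lem:semiample-subsystem}, shows that $I_P(n)$ reaches the full $(s_i+1)$-dimensional jet space at $Q$; this is exactly what the thickening $P^{(2)}$ buys. So the bad probability at $Q$ is $q^{-ea(s_i+1)}$. Then \eqref{eq:uniform-fiber-count} bounds the union over such $Q$ with $\deg Q>r$ by $c_i(r+1)q^{-(r+1)}/(1-q^{-1})$, uniformly in $P$.
\item \textbf{Low degree.} In this range the union bound fails only because the constants $c_i$ may be large at small degrees. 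Those points are handled by exact independence instead. The scheme $W_r=\coprod_{\deg Q\le r}Q^{(2)}_{Y_i}$ is finite and closed, so for $n\ge n_{\mathrm{low}}(r)$, \zcref{lem:semiample-restriction-closed} makes $I_P(n)$ surject onto $\bigoplus_{\deg Q\le r}H^0(Q^{(2)}_{Y_i},\sheaf O(nA))$. The low-degree success probability is then at least $\prod_i\gamma_{i,P}$, where $\gamma_{i,P}=\prod_{Q\in|Y_i\cap\rho^{-1}(P)|}(1-q^{-(s_i+1)\deg Q})$. By \zcref{lem:uniform-fiber-points} again, this is at least $\delta=\exp\bigl(-\tfrac{2}{q-1}\sum_ic_i\bigr)$.
\item \textbf{High range.} Points of relative degree $a>(n+1)/(s_i+1)$, where jet surjectivity is unavailable, need the $p$-th-power decoupling of \cite[Lemma~5.4(3)]{erman-wood}. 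This gives a bound $O\bigl((n^{s_i}+1)q^{-h_i(n)}\bigr)$, uniformly in $\deg P$, with $h_i(n)\to\infty$ linearly in $n$.
\end{itemize}

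The order of choices is what makes $n_1$ independent of $P$. First fix $\delta$. Then choose $r$ so that the medium tail is below $\delta/4$. Then take $n_{\mathrm{low}}(r)$, and $n_{\mathrm{high}}$ so that the high tail is below $\delta/4$. This gives $\beta_P(n)\ge\delta/2$ for every $P$ and every $n\ge\max\{n_0,n_{\mathrm{low}},n_{\mathrm{high}}\}$.
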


\begin{proof}
Fix $P\in|B|$ and put $e=\deg P$.
For $n\ge n_0$, \zcref{lem:semiample-stratum-image} shows that a uniform element of $I_P(n)$ maps uniformly to $I_{i,P}(n)$.
Thus the estimates for $C_i$ given by \zcref{lem:semiample-subsystem} apply to the corresponding events in $I_P(n)$.

All constants below are independent of $P$ and, after taking a maximum, of $i$.
Set
\[
 \gamma_{i,P}=
 \prod_{Q\in|Y_i\cap\rho^{-1}(P)|}
 \bigl(1-q^{-(s_i+1)\deg Q}\bigr).
\]
Every closed point in the fiber has degree $ea$ for some $a\ge1$.
Since $-\log(1-x)\le2x$ for $0\le x\le\tfrac12$, \zcref{lem:uniform-fiber-points} gives
\begin{align*}
 -\log\gamma_{i,P}
 &\le2\sum_{a\ge1}
 \#\bigl((Y_i\cap\rho^{-1}(P))(\mathbb F_{q^{ea}})\bigr)
 q^{-ea(s_i+1)}\\
 &\le2c_i e\sum_{a\ge1}q^{-ea}
 =\frac{2c_i e}{q^e-1}
 \le\frac{2c_i}{q-1}.
\end{align*}
Thus
\begin{equation}
\label{eq:uniform-low-product}
 \prod_{i=1}^N\gamma_{i,P}
 \ge
 \delta:=\exp\!\left(-\frac{2}{q-1}\sum_{i=1}^Nc_i\right)>0
\end{equation}
uniformly in $P$.

Choose $r$ larger than the degrees of all points on the zero-dimensional strata.
For $Q\in Y_i\cap\rho^{-1}(P)$, write $\deg Q=ea$.
In the range
\[
 ea>r,
 \qquad
 a\le\frac{n+1}{s_i+1},
\]
the proof of \cite[Lemma~6.1]{erman-wood} (in the fixed-generating-system form of \zcref{lem:semiample-subsystem}) gives singularity probability $q^{-ea(s_i+1)}$ at $Q$.
For fixed $a$, the number of closed points $Q$ of degree $ea$ is at most the left side of \eqref{eq:uniform-fiber-count}.
Hence the total medium-degree probability is at most
\begin{align*}
 c_i e\sum_{ea>r}q^{-ea}
 &=c_i\frac{e q^{-e(\lfloor r/e\rfloor+1)}}{1-q^{-e}}\\
 &\le c_i\frac{(r+1)q^{-(r+1)}}{1-q^{-1}}.
\end{align*}
For the last inequality, use $e\le r+1$ when $e\le r$; when $e>r$, the function $a\mapsto a/(q^a-1)$ is decreasing and $e\ge r+1$.
Choose $r$ so that the sum of these bounds over $i$ is less than $\delta/4$.

For $s_i>0$, put
\[
 J_i(n)=\left\lfloor\frac{n+1}{s_i+1}\right\rfloor+1
 \qquad
 \text{and}
 \qquad
 h_i(n)=\min\!\left\{J_i(n),
 \left\lfloor\frac{n-1}{p}\right\rfloor+1\right\}.
\]
With $J=J_i(n)$, \zcref{lem:semiample-subsystem} and \cite[Lemma~5.4(3)]{erman-wood} give
\[
 O\!\left((n^{s_i}+d^{s_i})q^{-(\lfloor d/p\rfloor+1)}
 +e(n^{s_i}+e^{s_i})q^{-eh_i(n)}\right).
\]
The first term tends to zero as $d\to\infty$.
Thus the limiting high-relative-degree probability is
\[
 O\!\left(e(n^{s_i}+e^{s_i})q^{-eh_i(n)}\right).
\]
For $h=h_i(n)$,
\begin{align*}
 \sup_{e\ge1}e(n^{s_i}+e^{s_i})q^{-eh}
 &\le n^{s_i}\sum_{a\ge1}a q^{-ah}
 +\sum_{a\ge1}a^{s_i+1}q^{-ah}\\
 &=O\bigl((n^{s_i}+1)q^{-h}\bigr)
 \longrightarrow0.
\end{align*}
Hence there is $n_{\mathrm{high}}$ such that, for every $n\ge n_{\mathrm{high}}$, the sum of the high-relative-degree probabilities over all positive-dimensional strata is less than $\delta/4$, uniformly in $P$.

Let
\[
 W_r=\coprod_{i=1}^N
 \coprod_{\substack{Q\in|Y_i|\\\deg Q\le r}}
 Q^{(2)}_{Y_i}.
\]
Each composite $Q^{(2)}_{Y_i}\to X_{\red}$ is an immersion with closed image, hence a closed immersion by
\cite[\href{https://stacks.math.columbia.edu/tag/01IQ}{Tag~01IQ}]{stacks-project}.
Thus $W_r$ is finite and closed in $X_{\red}$.
Let $n_{\mathrm{low}}$ be the integer supplied by \zcref{lem:semiample-restriction-closed} for $W_r$, and set
\[
 n_1=\max\{n_0,n_{\mathrm{high}},n_{\mathrm{low}}\}.
\]
Fix $n\ge n_1$.
Trivialize $\sheaf O_B(d)$ on each $P^{(2)}$ meeting $\rho(W_r)$, as in \zcref{lem:semiample-stable-image}.

For $P\in|B|$, put
\[
 W_{r,P}=\coprod_{\substack{i,Q\in|Y_i\cap\rho^{-1}(P)|\\\deg Q\le r}}Q^{(2)}_{Y_i}.
\]
The morphism $W_{r,P}\to B$ factors through $P^{(2)}$, since
$\rho^*(\mfrak_P^2)\subseteq\mfrak_Q^2$ on each component.
For $d\gg0$, restriction to $W_{r,P}$ therefore factors as
\[
 H^0(X_{\red},\sheaf O(nA+dE))\longrightarrow I_P(n)
 \longrightarrow H^0(W_{r,P},\sheaf O(nA)).
\]
The composite is surjective, so the second map is surjective.
Thus
\[
 I_P(n)\twoheadrightarrow
 \bigoplus_{\substack{i,Q\in|Y_i\cap\rho^{-1}(P)|\\\deg Q\le r}}
 H^0(Q^{(2)}_{Y_i},\sheaf O(nA)).
\]
The low-degree conditions are independent.
Their probability is
\[
 \prod_{i=1}^N
 \prod_{\substack{Q\in|Y_i\cap\rho^{-1}(P)|\\\deg Q\le r}}
 \bigl(1-q^{-(s_i+1)\deg Q}\bigr)
 \ge\prod_{i=1}^N\gamma_{i,P}\ge\delta.
\]  The union bound now gives
\[
 \beta_P(n)\ge\delta-\frac{\delta}{4}-\frac{\delta}{4}
 =\frac{\delta}{2}>0.
\]
The lower bound is independent of $P$ and holds for every $n\ge n_1$.
\end{proof}

\begin{theorem}
\label{thm:semiample-mult}
There is an integer $n_2$ such that, for every $n\ge n_2$, the density, as $d\to\infty$, of sections
\[
 s\in H^0(X,\sheaf O_X(nA+dE))
\]
whose zero scheme meets every $Y_i$ smoothly in codimension one is
\[
 \prod_{P\in|B|}\beta_P(n)>0.
\]
For every such section and every $\xi\in D_s$,
\begin{gather*}
 \ord_\xi(f_{s,\xi})=1,
 \qquad
 f_{s,\xi}\text{ is a nonzerodivisor on }\sheaf O_{X,\xi},
 \\
 \text{and}
 \quad
 e_\xi(D_s)=e_\xi(X).
\end{gather*}
\end{theorem}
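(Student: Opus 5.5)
The plan has two parts: the density statement, obtained by comparing $X$ with $X_{\red}$ and using \zcref{prop:stratified-semiample-sieve} together with \zcref{lem:semiample-positive-factors}, and the local statements at points of $D_s$, obtained from the bridge results in \zcref{sec:bridge}.

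\textbf{Density.} Transversality of $D_s$ to the strata depends only on the image of $s$ in $H^0(X_{\red},\sheaf O_{X_{\red}}(nA+dE))$. The reason is that the condition at $Q\in Y_i$ is read off from the image of $s$ in $\sheaf O_{Y_i,Q}/\mfrak_{Y_i,Q}^2$, and $Y_i$ is a subscheme of $X_{\red}$.

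First I would apply \zcref{lem:semiample-restriction-closed} with $W=X_{\red}\subseteq X$. This gives an $n_{\red}$ such that, for $n\ge n_{\red}$ and $d\gg0$, the restriction map
\[
H^0(X,\sheaf O_X(nA+dE))\longrightarrow H^0(X_{\red},\sheaf O_{X_{\red}}(nA+dE))
\]
is surjective. Its fibers are cosets of the kernel, so a uniform section of the source maps to a uniform section of the target. The density on $X$ therefore equals the density on $X_{\red}$.

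Next, set $n_2=\max\{n_1,n_{\red}\}$, with $n_1\ge n_0$ from \zcref{lem:semiample-positive-factors}. Then \zcref{prop:stratified-semiample-sieve} gives the limit $\prod_P\beta_P(n)$. It also shows the product is positive whenever no factor vanishes, and \zcref{lem:semiample-positive-factors} gives $\beta_P(n)>0$ for every $P$.

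\textbf{Local statements.} Fix a section $s$ in the good set and a point $\xi\in D_s$. Choose $i$ with $\xi\in Y_i$. The goal is to show that the image of $f_{s,\xi}$ in $\sheaf O_{Y_i,\xi}$ lies in $\mfrak\setminus\mfrak^2$; then \zcref{cor:transverse} with $c=1$ gives $\ord_\xi(f_{s,\xi})=1$, that $f_{s,\xi}$ is a nonzerodivisor, and that $e(\sheaf O_{X,\xi}/(f_{s,\xi}))=e_\xi(X)$. Since $\sheaf O_{D_s,\xi}=\sheaf O_{X,\xi}/(f_{s,\xi})$, this is exactly the claimed equality $e_\xi(D_s)=e_\xi(X)$.

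\begin{itemize}
\item If $\xi$ is a closed point, the hypothesis says $D_s\cap Y_i$ is smooth of dimension $s_i-1$ at $\xi$, or empty. By \zcref{lem:smooth-first-jets}, the first jet of $f_{s,\xi}$ in $\mfrak_{Y_i,\xi}/\mfrak_{Y_i,\xi}^2$ is then nonzero, as required.
\item If $\xi$ is not closed, I would use the same transfer from closed points as in the main theorem, namely \zcref{prop:closed-to-all-multiplicity} with $c=1$. The argument is that $D_s\cap Y_i$ is smooth of codimension one in $Y_i$ everywhere, because smoothness is an open condition that holds at all closed points. Hence at $\xi$ the local equation restricted to the regular ring $\sheaf O_{Y_i,\xi}$ cuts out a regular codimension-one subscheme. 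By \zcref{lem:smooth-first-jets}, its class in $\mfrak_{Y_i,\xi}/\mfrak_{Y_i,\xi}^2$ is again nonzero, and the same conclusion follows.
\end{itemize}

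\textbf{Main obstacle.} The delicate step is the comparison between $X$ and $X_{\red}$. One has to check that the bound $n_2$ really makes the restriction surjective for all large $d$, uniformly in $d$, so that the limiting densities coincide. This is what \zcref{lem:semiample-restriction-closed} provides once $n\ge n_{\red}$ is fixed. The remaining steps are direct invocations of the earlier results.
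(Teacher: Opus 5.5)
Your proposal is correct and follows essentially the same route as the paper: choose $n_2\ge n_1$ so that \zcref{lem:semiample-restriction-closed} makes restriction to $X_{\red}$ surjective with equal-size fibers for $d\gg0$, read off the density and its positivity from \zcref{prop:stratified-semiample-sieve} and \zcref{lem:semiample-positive-factors}, and obtain the local statements from \zcref{prop:closed-to-all-multiplicity} with $c=1$. Your openness-plus-closed-points argument for non-closed points is the content of \zcref{lem:rank-closed-points-all-points}, and no uniformity in $d$ is needed: for each fixed $n\ge n_2$, surjectivity for all $d$ beyond a threshold depending on $n$ is enough for the limit.
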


\begin{proof}
Choose $n_2\ge n_1$ so that \zcref{lem:semiample-restriction-closed} applies to $W=X_{\red}$.
For $n\ge n_2$ and $d\gg0$, restriction to $X_{\red}$ is surjective, with fibers of equal cardinality.
The density is therefore \eqref{eq:semiample-product}, and it is positive by \zcref{lem:semiample-positive-factors}.
Apply \zcref{prop:closed-to-all-multiplicity} with $c=1$.
\end{proof}

\begin{corollary}
\label{cor:semiample-very-ample}
If $E$ is very ample, then, for $n\ge n_2$,
\[
 \prod_{P\in|B|}\beta_P(n)
 =\prod_{i=1}^N\zeta_{Y_i}(s_i+1)^{-1}.
\]
\end{corollary}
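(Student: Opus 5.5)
When $E$ is very ample, the morphism $\pi\colon X\to\bP^M_{\Fq}$ defined by $|E|$ is a closed immersion. The plan is therefore to evaluate each local factor $\beta_P(n)$ from \eqref{eq:semiample-local-factor} directly, and then regroup the Euler product over $|B|$ as a product over the strata. No new sieve estimate is needed. By \zcref{thm:semiample-mult}, for $n\ge n_2$ the density is already known to be $\prod_{P\in|B|}\beta_P(n)$, so the corollary is purely a computation of these factors.

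\emph{Step 1: $\rho$ is an isomorphism.} Restricting $\pi$ to $X_{\red}$ gives a closed immersion $X_{\red}\to\bP^M_{\Fq}$. Its scheme-theoretic image is reduced, so it equals $B=\pi(X_{\red})_{\red}$, and $\rho\colon X_{\red}\to B$ is an isomorphism. Consequently:
\begin{itemize}[label=\textendash]
\item each $P\in|B|$ has a single preimage $Q$ with $\kappa(Q)=\kappa(P)$, so $\deg Q=\deg P$;
\item $X_{P^{(2)}}=X_{\red}\times_BP^{(2)}\cong\Spec(\sheaf O_{X_{\red},Q}/\mfrak_Q^2)$;
\item $\mathcal F_n=\rho_*\sheaf O_{X_{\red}}(nA)$ is an invertible sheaf on $B$.
\end{itemize}
The map in \eqref{eq:stable-image} is then the identity of $H^0(X_{P^{(2)}},\sheaf O(nA))$. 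Trivializing $\sheaf O(nA)$ on this local Artinian scheme identifies $I_P(n)$ with the whole of $\sheaf O_{X_{\red},Q}/\mfrak_Q^2$.

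\emph{Step 2: compute $\beta_P(n)$.} The point $Q$ lies in exactly one stratum $Y_i$, because the strata partition $X_{\red}$. Hence the condition defining $U_P(n)$ is the single requirement $u_{i,Q}\neq0$. The restriction
\[
 \sheaf O_{X_{\red},Q}/\mfrak_{X_{\red},Q}^2\longrightarrow\sheaf O_{Y_i,Q}/\mfrak_{Y_i,Q}^2
\]
is a surjective $\kappa(Q)$-linear map, since $Y_i$ is a locally closed subscheme of $X_{\red}$ and $Q$ is a point of $Y_i$. A uniform element of $I_P(n)$ therefore maps to a uniform element of the target. Because $Y_i$ is smooth of dimension $s_i$, the target has $\kappa(Q)$-dimension $s_i+1$, as in the proof of \zcref{thm:intro-main}. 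This gives
\[
 \beta_P(n)=1-q^{-(s_i+1)\deg Q}.
\]
When $s_i=0$ this reads $1-q^{-\deg Q}$, which is the probability that $Q\notin D_s$, matching the convention for empty intersections.

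\emph{Step 3: regroup the product.} Since $\rho$ is a bijection on closed points and $|X_{\red}|=\bigsqcup_i|Y_i|$, we get
\[
 \prod_{P\in|B|}\beta_P(n)
 =\prod_{i=1}^N\prod_{Q\in|Y_i|}\bigl(1-q^{-(s_i+1)\deg Q}\bigr)
 =\prod_{i=1}^N\zeta_{Y_i}(s_i+1)^{-1}.
\]
Each inner product converges absolutely because $s_i+1>\dim Y_i$, so the rearrangement is legitimate.

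The step needing the most care is Step 1, specifically the identification of $I_P(n)$ with the full first-order jet space at $Q$. This requires that $\pi$ defined by the complete linear system $|E|$ is a closed immersion, and that the base change and trivializations in \eqref{eq:stable-image} are compatible with those in \zcref{lem:semiample-stable-image}. Once that identification is in place, the rest is immediate.
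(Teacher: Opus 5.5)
Your proposal is correct and follows essentially the same route as the paper. Since $\pi$ is a closed immersion, $\rho$ is an isomorphism, and $I_P(n)$ is the full first-order jet space at the unique point $Q$ over $P$. Restriction onto $\sheaf O_{Y_i,Q}/\mfrak_{Y_i,Q}^2$ is surjective, which gives $\beta_P(n)=1-q^{-(s_i+1)\deg P}$, and regrouping the factors by stratum yields the zeta product. Your extra remark on absolute convergence justifies the regrouping that the paper summarizes as ``multiplying over the closed points of the strata.''
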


\begin{proof}
Since $E$ is very ample, $\pi$ is a closed immersion.
Thus $\rho:X_{\red}\to B$ is an isomorphism and
\[
 X_{P^{(2)}}\cong P^{(2)}
 \qquad
 \text{and}
 \qquad
 I_P(n)=H^0(P^{(2)},\sheaf O_{X_{\red}}(nA)).
\]
Let $P\in Y_i$.
The restriction
\[
 H^0(P^{(2)},\sheaf O(nA))
 \longrightarrow
 H^0(P^{(2)}_{Y_i},\sheaf O(nA))
\]
is induced by a quotient of local Artinian rings and is therefore surjective.
The target has cardinality $q^{(s_i+1)\deg P}$.
Its zero class is the only bad first jet.
Therefore
\[
 \beta_P(n)=1-q^{-(s_i+1)\deg P}.
\]
Multiplying over the closed points of the strata gives the formula; compare with \cite[Example~4.3]{erman-wood}.
\end{proof}

\appendix

\section{A finite construction of permissible stratifications}
\label{app:expanded-strat}

The proof of \zcref{thm:stratification} gives the following finite procedure.
Start with $W_0=X_{\red}$.
If $W_\nu=\varnothing$, stop.
Otherwise choose an irreducible component $Z_\nu$ of $W_\nu$, choose a nonempty open subset
\[
  U_\nu\subseteq\Reg(Z_\nu)
\]
which is disjoint from the other irreducible components of $W_\nu$ and such that $X$ is normally flat along $Z_\nu$ at every point of $U_\nu$, and put
\[
  W_{\nu+1}=(W_\nu\setminus U_\nu)_{\red}.
\]
The existence of $U_\nu$ is \zcref{thm:generic-nf} together with openness and nonemptiness of the regular locus (\cite[\href{https://stacks.math.columbia.edu/tag/07R2}{Tag~07R2}]{stacks-project}).
At each step, the underlying closed subset becomes strictly smaller.
Since $X$ is Noetherian, the procedure terminates.
The nonempty sets $U_\nu$ are the strata.

\section{Closed points and arbitrary points}
\label{app:closed-points}

Let $k$ be perfect, let $Y$ be smooth over $k$, let $\sheaf L_1,\ldots,\sheaf L_c$ be invertible on $Y$, and let $t_j\in H^0(Y,\sheaf L_j)$.
Put
\[
 V=V(t_1,\ldots,t_c).
\]
Along $V$, differentiation defines
\begin{equation}
\label{eq:first-jet-map}
 \delta_t:\bigoplus_{j=1}^c\sheaf L_j^{-1}|_V
 \longrightarrow\Omega^1_{Y/k}|_V.
\end{equation}
If $t_j=g_je_j$ for a local generator $e_j$ of $\sheaf L_j$, the $j$th summand sends $e_j^{-1}$ to $dg_j|_V$.

The following lemma shows that closed points detect transversality.

\begin{lemma}
\label{lem:rank-closed-points-all-points}
For $\xi\in V$, the rank of $\delta_t(\xi)$ equals the rank of the classes of $g_1,\ldots,g_c$ in
\[
 \mfrak_{Y,\xi}/\mfrak_{Y,\xi}^2.
\]
If this rank is $c$ at every closed point of $V$, then it is $c$ at every point of $V$.
\end{lemma}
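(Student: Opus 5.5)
The plan has two parts. First, identify the fiber of $\delta_t$ at $\xi$ with the map on cotangent spaces. Then use a semicontinuity argument on $V$ to pass from closed points to arbitrary points.

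\emph{First assertion.} Work locally near $\xi$, where $t_j=g_je_j$, so $\delta_t$ sends the basis element $e_j^{-1}$ to $dg_j|_V$. Since $Y$ is smooth over the perfect field $k$, we have $\Omega^1_{Y/k}\otimes\kappa(\xi)$ and the conormal map
\[
 \mfrak_{Y,\xi}/\mfrak_{Y,\xi}^2\longrightarrow\Omega^1_{Y/k}\otimes\kappa(\xi).
\]
By the second fundamental exact sequence for $k\to\sheaf O_{Y,\xi}\to\kappa(\xi)$, this map is injective when $\kappa(\xi)/k$ is separable, which holds because $k$ is perfect (Stacks, Tag 0B2E or the conormal sequence for a point on a smooth scheme). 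Under this injection the class of $g_j$ maps to $dg_j(\xi)$. Hence the rank of $\delta_t(\xi)$, which is the rank of $(dg_j(\xi))_j$, equals the rank of the classes of $g_j$ in $\mfrak_{Y,\xi}/\mfrak_{Y,\xi}^2$. This step requires $g_j\in\mfrak_{Y,\xi}$, which holds because $\xi\in V$.

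\emph{Second assertion.} Consider the locus
\[
 D=\{\xi\in V:\operatorname{rank}\delta_t(\xi)<c\}.
\]
This is a map of finite locally free sheaves on $V$, so $D$ is the closed subset cut out by the $c\times c$ minors (a Fitting ideal of the cokernel, or the degeneracy locus). Since $\operatorname{rank}\delta_t(\xi)\le c$ always, $D$ is exactly the set where the rank is not $c$, and its complement is open.

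If $D\ne\varnothing$, then $D$, being a nonempty closed subset of a scheme of finite type over a field, contains a closed point of $V$ (finite-type schemes are Jacobson). That closed point would have rank $<c$, contradicting the hypothesis. Hence $D=\varnothing$, so the rank is $c$ at every point of $V$. By the first assertion, the classes of $g_1,\dots,g_c$ are then independent in $\mfrak_{Y,\xi}/\mfrak_{Y,\xi}^2$ for every $\xi\in V$.

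The main obstacle is the first step at non-closed points $\xi$. There, $\kappa(\xi)$ is transcendental over $k$ and the conormal map need not be an isomorphism; it is only injective, with cokernel $\Omega^1_{\kappa(\xi)/k}$. Injectivity needs $\kappa(\xi)$ to be separable over $k$, that is, formally smooth, and this is exactly where perfectness of $k$ is used. I would cite the conormal-sequence injectivity (Stacks, Tag 00RT/0B2E). Alternatively, I would note that $\sheaf O_{Y,\xi}$ is regular and the $g_j$ vanish, and compare ranks via the regular-sequence criterion of \zcref{lem:smooth-first-jets}.
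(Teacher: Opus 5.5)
Your proposal is correct and matches the paper's proof essentially step for step. Both arguments use injectivity of $\mfrak_{Y,\xi}/\mfrak_{Y,\xi}^2\to\Omega^1_{Y/k}\otimes\kappa(\xi)$, which holds because $\kappa(\xi)/k$ is separable, and then show that the rank-defect locus is cut out by $c\times c$ minors and is empty by the Jacobson property of $V$; the only extra step in the paper is checking that $\delta_t$ is independent of the local generators $e_j$ (using $g_j|_V=0$), which your local computation takes for granted.
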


\begin{proof}
If $e_j'=u_je_j$, then $g_j'=u_j^{-1}g_j$ and
\[
 dg_j'|_V=u_j^{-1}dg_j|_V;
\]
thus \eqref{eq:first-jet-map} is well defined.
For $\xi\in V$, the canonical map
\[
 \mfrak_{Y,\xi}/\mfrak_{Y,\xi}^2
 \longrightarrow
 \Omega^1_{Y/k}\otimes\kappa(\xi)
\]
is injective by
\cite[\href{https://stacks.math.columbia.edu/tag/00TU}{Tag~00TU}]{stacks-project}, because $\kappa(\xi)/k$ is separably generated.
It sends the class of $g_j$ to $dg_j(\xi)$, proving the first assertion.

In local frames, the rank $<c$ locus of \eqref{eq:first-jet-map} is cut out by the $c\times c$ minors of its matrix; it is therefore closed in $V$.
The scheme $V$ is Jacobson by
\cite[\href{https://stacks.math.columbia.edu/tag/02J6}{Tag~02J6}]{stacks-project}; a nonempty closed subset therefore contains a closed point.
The rank-defect locus is empty.
\end{proof}
Now we show that closed points suffice.

\begin{proposition}
\label{prop:closed-to-all-multiplicity}
Let $X$ be of finite type over a perfect field $k$, let
\[
 X_{\red}=Y_1\sqcup\cdots\sqcup Y_N
\]
be a smooth permissible stratification, and put $s_i=\dim Y_i$.
Let $\sheaf L_1,\ldots,\sheaf L_c$ be invertible on $X$, with sections $t_j$, and set
\[
 V_i=Y_i\cap V(t_1,\ldots,t_c).
\]
Assume, for every $i$, that
\begin{enumerate}[label=\textup{(\alph*)}]
\item if $c\le s_i$, then $V_i$ is smooth of dimension $s_i-c$ at every closed point;
\item if $c>s_i$, then $V_i$ has no closed points.
\end{enumerate}
Then $V_i$ is smooth of dimension $s_i-c$ when $c\le s_i$, and $V_i=\varnothing$ when $c>s_i$.
If $\xi\in V(t_1,\ldots,t_c)$ and $f_j\in\sheaf O_{X,\xi}$ are local equations of the $t_j$, then $f_1,\ldots,f_c$ is $\sheaf O_{X,\xi}$-regular and
\[
 e\bigl(\sheaf O_{X,\xi}/(f_1,\ldots,f_c)\bigr)
 =e(\sheaf O_{X,\xi}).
\]
For $c=1$, one also has $f_1\in\mfrak_{X,\xi}\setminus\mfrak_{X,\xi}^2$.
\end{proposition}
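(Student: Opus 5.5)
The plan is to reduce everything to \zcref{lem:rank-closed-points-all-points} on each stratum and then to \zcref{cor:transverse} at the point in question. Fix $i$. Restrict the sections $t_j$ and line bundles $\sheaf L_j$ to $Y_i$; since $Y_i$ is a locally closed subscheme of $X_{\red}$, hence of $X$, this makes sense. The resulting scheme $V(t_1|_{Y_i},\ldots,t_c|_{Y_i})$ is exactly $V_i$, because scheme-theoretic intersection with $Y_i$ commutes with restriction. At a closed point $\xi\in V_i$, write $g_j$ for the image of $f_j$ in $\sheaf O_{Y_i,\xi}$.

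In case (a), smoothness of dimension $s_i-c$ at $\xi$ is equivalent, by \zcref{lem:smooth-first-jets} (over the perfect field $k$, so that regular and smooth agree), to linear independence of the classes of $g_1,\ldots,g_c$ in $\mfrak_{Y_i,\xi}/\mfrak_{Y_i,\xi}^2$. Hence the rank of $\delta_t$ equals $c$ at every closed point of $V_i$. \zcref{lem:rank-closed-points-all-points} then upgrades this to every point of $V_i$. Applying \zcref{lem:smooth-first-jets} again gives that $V_i$ is regular of codimension $c$ at every point, and therefore smooth of dimension $s_i-c$. Some care is needed with dimension at non-closed points: I would phrase the conclusion as $V_i$ being smooth with every irreducible component of dimension $s_i-c$. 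This follows from codimension $c$ at closed points of the equidimensional smooth scheme $Y_i$, since every component of $V_i$ contains a closed point, using that $V_i$ is Jacobson.

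In case (b), $V_i$ is locally closed in a finite-type $k$-scheme, hence Jacobson. A nonempty Jacobson scheme has a closed point, so $V_i=\varnothing$.

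For the local algebra, take any $\xi\in V(t_1,\ldots,t_c)$. It lies in some $Y_i$, and therefore in $V_i$. By the previous paragraph we are in case (a), and the classes of the $g_j=\bar f_j$ are linearly independent in $\mfrak_{Y_i,\xi}/\mfrak_{Y_i,\xi}^2$. This holds even for non-closed $\xi$, by \zcref{lem:rank-closed-points-all-points} and \zcref{lem:smooth-first-jets}, with the codimension of $V_i$ in $Y_i$ at $\xi$ equal to $c$. Now \zcref{cor:transverse}, applied with the permissible prime $\pfrak_{i,\xi}$ of $Z_i$ at $\xi$, gives that $f_1,\ldots,f_c$ is $\sheaf O_{X,\xi}$-regular, that multiplicity is preserved, and, for $c=1$, that $\ord_\xi(f_1)=1$. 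One should check that the local equations $f_j$ lie in $\mfrak_{X,\xi}$; they do, since $\xi$ is in the zero locus.

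The main obstacle is the passage from closed points to arbitrary points in the dimension statement. \zcref{lem:rank-closed-points-all-points} only controls the rank of the first jets, so one must verify that rank $c$ at a non-closed $\xi$ really gives linear independence in $\mfrak_{Y_i,\xi}/\mfrak_{Y_i,\xi}^2$. That lemma provides exactly this via the injectivity of $\mfrak/\mfrak^2\to\Omega^1\otimes\kappa(\xi)$, which uses perfectness of $k$ to make $\kappa(\xi)/k$ separably generated. Independence is the hypothesis \zcref{cor:transverse} requires, so no dimension bookkeeping is actually needed for the multiplicity conclusion.
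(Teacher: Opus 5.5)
Your proposal is correct and follows the paper's proof essentially step for step. You get first-jet rank $c$ at closed points via \zcref{lem:smooth-first-jets}, propagate it to all points of $V_i$ via \zcref{lem:rank-closed-points-all-points}, use the Jacobson property to get $V_i=\varnothing$ when $c>s_i$, and then apply the bridge lemma at the permissible prime of $(\overline{Y_i})_{\red}$; invoking \zcref{cor:transverse} is exactly \zcref{thm:bridge-ci} applied to that prime. Your extra care about pure dimension at non-closed points and about $f_j\in\mfrak_{X,\xi}$ makes explicit what the paper leaves implicit.
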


\begin{proof}
Suppose $c\le s_i$.
At every closed point of $V_i$, \zcref{lem:smooth-first-jets} gives rank $c$ for \eqref{eq:first-jet-map}.
By \zcref{lem:rank-closed-points-all-points}, its rank is $c$ on $V_i$.
Hence \zcref{lem:smooth-first-jets} shows that $V_i$ is regular of pure dimension $s_i-c$.
It is smooth over $k$ by
\cite[\href{https://stacks.math.columbia.edu/tag/0B8X}{Tag~0B8X}]{stacks-project}.

If $c>s_i$, then $V_i$ has no closed point.
Since it is of finite type over $k$, it is empty by
\cite[\href{https://stacks.math.columbia.edu/tag/02J6}{Tag~02J6}]{stacks-project}.

Let $\xi$ lie in the common zero scheme and choose $i$ with $\xi\in Y_i$.
The preceding paragraph gives $c\le s_i$.
The classes of $f_1,\ldots,f_c$ in
\[
 \mfrak_{Y_i,\xi}/\mfrak_{Y_i,\xi}^2
\]
are linearly independent.
The local prime defining $(\overline{Y_i})_{\red}$ is permissible, so \zcref{thm:bridge-ci} applies.
\end{proof}

\section{The infinite-field theorem}
\label{sec:infinite-field}

We use the results of \zcref{sec:bridge} and the axiomatic approach of \cite{spreafico} to prove the analogous Bertini theorem for Hilbert--Samuel multiplicity over any \emph{infinite} field $k$.
The result for $k$ algebraically closed can be found in \cite{tommaso} and \cite{complex-links}, though we remove the requirement of those results that the scheme is either reduced or equidimensional.

\begin{theorem}
\label{thm:infinite-field}
Let $k$ be an infinite field, let $r\ge1$, let $X$ be of finite type over $k$, and let
\[
 X_{\red}=Y_1\sqcup\cdots\sqcup Y_N
\]
be a smooth permissible stratification.
Let $\phi:X\to\bP^r_k$ be a morphism.
Assume that, for every $i$ and every $\xi\in Y_i$, the extension
\[
 \kappa(\xi)/\kappa(\phi(\xi))
\]
is separably generated.
Then there is a nonempty open subset
\[
 U\subseteq(\bP^r_k)^\vee
\]
such that, for every $H\in U(k)$ and every $i$, the scheme $Y_i\cap\phi^{-1}(H)$ is either empty or smooth of pure codimension one in $Y_i$.
If $\xi\in\phi^{-1}(H)$ and $f_{H,\xi}$ is a local equation of the pullback hyperplane, then
\begin{gather*}
 \ord_\xi(f_{H,\xi})=1,
 \qquad
 f_{H,\xi}\text{ is a nonzerodivisor on }\sheaf O_{X,\xi},\\
 \text{and}
 \quad
 e_\xi(\phi^{-1}(H))=e_\xi(X).
\end{gather*}
\end{theorem}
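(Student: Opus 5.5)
The plan splits the statement into a classical Bertini step on each smooth stratum and a local algebra step from \zcref{sec:bridge}.

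\emph{Bertini on each stratum.} For each $i$, apply the axiomatic Bertini theorem of \cite{spreafico} to the composite $\phi|_{Y_i}\colon Y_i\to\bP^r_k$. The property used is ``smooth over $k$''. It satisfies Spreafico's axioms: it is local, it is open on the source, it descends along smooth morphisms, and it holds at the generic point of the fibres of the universal hyperplane-section family. The separable-generation hypothesis on $\kappa(\xi)/\kappa(\phi(\xi))$ is exactly what makes the generic fibre of $\{(y,H):\phi(y)\in H\}\to(\bP^r_k)^\vee$ smooth, and not merely regular.

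Concretely, the incidence variety $\mathcal I_i\subseteq Y_i\times(\bP^r_k)^\vee$ is a projective-bundle of rank $r-1$ over $Y_i$, hence smooth. Its projection to $(\bP^r_k)^\vee$ is generically smooth, which needs generic smoothness over a possibly imperfect field. The separable-generation hypothesis is what I plan to use at this step. I expect it to be the main obstacle: citing \cite{spreafico} precisely enough that separable generation of residue fields, rather than perfectness of $k$, suffices.

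This gives a dense open $U_i\subseteq(\bP^r_k)^\vee$ such that for $H\in U_i$ the fibre $Y_i\cap\phi^{-1}(H)$ is smooth of pure codimension one in $Y_i$, or empty. Set $U=\bigcap_i U_i$. It is nonempty open because there are finitely many strata, and $U(k)\neq\varnothing$ because $k$ is infinite.

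\emph{Local algebra.} Fix $H\in U(k)$ and $\xi\in\phi^{-1}(H)$, and choose $i$ with $\xi\in Y_i$.
1. Since $Y_i\cap\phi^{-1}(H)$ is regular of codimension one at $\xi$, \zcref{lem:smooth-first-jets} (iii)$\Rightarrow$(i) shows that the image of $f_{H,\xi}$ in $\mfrak_{Y_i,\xi}/\mfrak_{Y_i,\xi}^2$ is nonzero.
2. This holds at every point $\xi$, not only at closed points, so no closed-point reduction is needed here. (Alternatively, one can argue via \zcref{lem:rank-closed-points-all-points}, but the direct argument suffices.)
3. The local prime of $(\overline{Y_i})_{\red}$ in $\sheaf O_{X,\xi}$ is permissible, and $\sheaf O_{X,\xi}$ modulo it is $\sheaf O_{Y_i,\xi}$. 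So \zcref{cor:transverse} with $c=1$, equivalently \zcref{thm:bridge}, gives $\ord_\xi(f_{H,\xi})=1$, that $f_{H,\xi}$ is a nonzerodivisor, and $e(\sheaf O_{X,\xi}/(f_{H,\xi}))=e(\sheaf O_{X,\xi})$.
4. Since $\sheaf O_{\phi^{-1}(H),\xi}=\sheaf O_{X,\xi}/(f_{H,\xi})$, this is the claimed equality of multiplicities.

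Finally, the local equation is independent of the trivialization up to a unit, so the order and the ideal $(f_{H,\xi})$ are well defined.
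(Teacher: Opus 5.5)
Your local half (a nonzero first jet of $f_{H,\xi}$ on $Y_i$ via \zcref{lem:smooth-first-jets}, then \zcref{thm:bridge} at the permissible prime of $(\overline{Y_i})_{\red}$) is exactly the paper's argument and is fine. The gap is in the Bertini half. This is the only global input, and you leave it open yourself.

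You locate the issue correctly. $\mathcal I_i$ is regular, so the generic fibre of $\mathcal I_i\to(\bP^r_k)^\vee$ is regular, and what must be shown is smoothness over the function field of $(\bP^r_k)^\vee$. Your proposed justification does not supply this:
\begin{itemize}
\item The last ``axiom'' you list for ``smooth over $k$'' is that it holds at the generic point of the fibres of the universal hyperplane family. That is precisely the statement to be proved, so invoking it is circular.
\item ``Smooth over $k$'' is a relative property. It is not the kind of absolute local-ring property for which the framework of \cite{spreafico} is set up.
\item The obstruction has nothing to do with perfectness of $k$. The function field of $(\bP^r_k)^\vee$ is imperfect in characteristic $p$ for every $k$. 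Already for $k=\bar k$, the map $\mathbb A^1\to\bP^1$, $x\mapsto[1:x^p]$, has every nonempty fibre $\{x^p=a\}$ non-reduced, although every closed point has a separable residue extension. What matters is the hypothesis at non-closed points: here the generic point, where $k(x)/k(x^p)$ is inseparable.
\end{itemize}

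The paper closes this step by taking the property to be regularity, which satisfies Spreafico's axioms by \cite[Proposition~2.4]{spreafico}. The smooth stratum $Y_i$ is geometrically regular by \cite[Tag~038X]{stacks-project}. Separably generated extensions are separable by \cite[Tag~030X]{stacks-project}, so $\phi|_{Y_i}$ is residually separable in Spreafico's sense. Then \cite[Corollary~4.3]{spreafico} gives a dense open $U_i'$ such that $Y_i\cap\phi^{-1}(H)$ is geometrically regular for $H\in U_i'(k)$, hence smooth by \cite[Tag~038X]{stacks-project}.

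Pure codimension one is obtained separately. One discards the proper closed set $T_i$ of hyperplanes containing $\overline{\phi(Y_i)}$. The pulled-back equation is then nonzero at the generic point of the integral scheme $Y_i$, so it cuts out an effective Cartier divisor.

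Your incidence-variety route would also give pure codimension one, since $\mathcal I_i$ is irreducible of dimension $\dim Y_i+r-1$. But that only works once generic smoothness of $\mathcal I_i\to(\bP^r_k)^\vee$ has actually been established, for example by the citation chain above.
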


\begin{proof}
Fix $i$.
Since $Y_i$ is smooth over $k$, it is geometrically regular over $k$ by
\cite[\href{https://stacks.math.columbia.edu/tag/038X}{Tag~038X}]{stacks-project}.
The morphism $\phi_i=\phi|_{Y_i}$ is residually separable in the sense of
\cite[\S\,1.1]{spreafico}: indeed, every residue-field extension of $\phi_i$ is separably generated by hypothesis, hence separable by
\cite[\href{https://stacks.math.columbia.edu/tag/030X}{Tag~030X}]{stacks-project}.
Regularity satisfies Spreafico's axioms by \cite[Proposition~2.4]{spreafico}.
Thus
\cite[Corollary~4.3]{spreafico} gives a dense open subset
\[
 U_i'\subseteq(\bP^r_k)^\vee
\]
such that $Y_i\cap\phi^{-1}(H)$ is geometrically regular for every $H\in U_i'(k)$.

Let $W_i=\overline{\phi(Y_i)}_{\red}$.
Hyperplanes containing $W_i$ form a proper closed subset $T_i\subseteq(\bP^r_k)^\vee$: if $\dim W_i=0$, they are the hyperplanes through its unique closed point, and if $\dim W_i>0$, the same assertion follows because $W_i$ is not contained in every hyperplane.
For $H\notin T_i$, the pullback of an equation of $H$ is nonzero at the generic point of the integral scheme $Y_i$.
Hence its zero scheme on $Y_i$ is either empty or an effective Cartier divisor, and therefore is pure of codimension one.
For $H\in U_i'$, it is geometrically regular, hence smooth over $k$ by
\cite[\href{https://stacks.math.columbia.edu/tag/038X}{Tag~038X}]{stacks-project}.

Set
\[
 U=\bigcap_{i=1}^N(U_i'\setminus T_i).
\]
This is a nonempty open subset of $(\bP^r_k)^\vee$.
Since $k$ is infinite, $U(k)\ne\varnothing$; equivalently, $k$-rational points are Zariski dense in projective space.

Let $H\in U(k)$ and $\xi\in Y_i\cap\phi^{-1}(H)$.
The Cartier divisor $Y_i\cap\phi^{-1}(H)$ is smooth of codimension one at $\xi$.
Hence its local equation has nonzero image in
\[
 \mfrak_{Y_i,\xi}/\mfrak_{Y_i,\xi}^2.
\]
By \zcref{thm:bridge}, its image in $\gr_{\mfrak_\xi}(\sheaf O_{X,\xi})$ is regular of degree one.
Therefore
\begin{gather*}
 \ord_\xi(f_{H,\xi})=1,
 \qquad
 f_{H,\xi}\text{ is a nonzerodivisor},\\
 \text{and}
 \quad
 e_\xi(\sheaf O_{X,\xi}/(f_{H,\xi}))=e_\xi(X).\qedhere
\end{gather*}
\end{proof}

\begin{remark}
\label{rem:infinite-imperfect}
The smoothness hypothesis on the strata cannot in general be replaced by regularity when $k$ is imperfect.
\cite[Corollary~4.3]{spreafico} applied to regularity assumes geometric regularity of the source.
For a finite-type $k$-scheme, this is equivalent to smoothness over $k$ by
\cite[\href{https://stacks.math.columbia.edu/tag/038X}{Tag~038X}]{stacks-project}.
If $k$ is perfect, every regular finite-type $k$-scheme is smooth, so \zcref{thm:stratification} supplies the required stratification.
\end{remark}

\bibliographystyle{amsalpha}
\bibliography{ref}

\end{document}